\crefname{equation}{}{}
\newcommand{\E}{\mathbb{E}}
\newcommand{\bx}{\mathbf{x}}
\newcommand{\by}{\mathbf{y}}
\newcommand{\bz}{\mathbf{z}}
\newcommand{\cB}{\mathcal{B}}
\newcommand{\cD}{\mathcal{D}}
\newcommand{\cG}{\mathcal{G}}
\newcommand{\cS}{\mathcal{S}}
\newcommand{\cM}{\mathcal{M}}
\renewcommand{\epsilon}{\varepsilon}
\renewcommand{\P}{\mathbb{P}}
\newcommand{\one}{\mathbbm{1}}
\newtheorem{prop}{Proposition}   
\newtheorem{lemma}[prop]{Lemma} 
\newtheorem{theorem}[prop]{Theorem} 
\newtheorem{claim}[prop]{Claim}
\newtheorem{fact}[prop]{Fact}
\newtheorem{corollary}[prop]{Corollary}
\theoremstyle{definition}
\theoremstyle{remark}
\newtheorem{remark}[prop]{Remark}
\title{The random stable roommates problem typically has no solution} 
\author{Byron Chin}
\address{Department of Mathematics, Massachusetts Institute of Technology}
\email{byronc@mit.edu}
\author{Marcus Michelen}
\address{Department of Mathematics, Northwestern University}
\email{michelen@northwestern.edu}
\begin{document}

\begin{abstract} 
    Assume that $n = 2k$ potential roommates each have an ordered preference of the $n-1$ others. A stable matching is a perfect matching of the $n$ roommates in which no two unmatched people prefer each other to their matched partners. In their seminal 1962 stable marriage paper, Gale and Shapley noted that not every instance of the stable roommates problem admits a stable matching. In the case when the preferences are chosen uniformly at random, Gusfield and Irving predicted in 1989 that there is no stable matching with high probability for large $n$. We prove this conjecture and show that for $n$ sufficiently large, the probability there is a stable matching is at most $n^{-1/17}$.
\end{abstract}

\maketitle

\section{Introduction}

In the stable matching problem, there are an even number $n$ of people whose underlying connections are described by a graph $G$. Each person has a ranked list of preferences over their neighbors in $G$, and the goal is to group the $n$ people into $n/2$ pairs such that no two unmatched people would prefer to pair with each other over their respective matches.  We are interested in whether or not a stable matching exists for a given instance of the graph $G$ and the ranked lists. 

This problem was introduced in the pioneering work of Gale and Shapley \cite{GS:62} who most famously considered the complete bipartite setting of the problem, that is $G = K_{n/2, n/2}$, also known as the stable marriage problem. Under arbitrary preferences, they give an efficient algorithm which terminates in a stable matching thereby showing that a stable matching \emph{always} exists in this bipartite setting. Their algorithm proceeds by an iterative sequence of proposals by one side of the graph, with the other operating under a deferred acceptance protocol. Variants of this highly influential work have been implemented in practice, and for this work, Gale and Shapley were awarded the Nobel Memorial Prize in Economic Sciences.

In this paper we consider the complete graph setting, that is $G = K_n$, also known as the stable roommates problem. Somewhat surprisingly, without the bipartite structure there are instances of preferences for which \emph{no} stable matching exists. This was already noted in Gale and Shapley's original paper \cite{GS:62} in which they  provide an instance with 4 people that does not admit a stable matching. We are interested in the behavior when the preference lists are chosen uniformly at random.

It was conjectured by Gusfield and Irving \cite{GI:89} in 1989 that the probability a stable matching exists vanishes as $n$ grows large. However, despite a fairly detailed understanding of stable matchings under both uniform and arbitrary preferences \cite{P:93, M:15, P:19}, bounds on the probability of existence have proven to be difficult. The only non-trivial  upper bound of $e^{1/2}/2 = 0.8244...$ as $n \to \infty$ was proven by Pittel and Irving \cite{P:94} in 1994. Our main result is the first vanishing upper bound on the probability that a stable matching exists, confirming the conjecture of Gusfield and Irving.
\begin{theorem}\label{th:no-stable-matching}
    Let $X$ be the number of stable matchings on $K_n$ with independent and uniformly random preferences. Then for $n$ sufficiently large we have 
    \[ \P(X \geq 1) \leq n^{-1/17}\,. \]
\end{theorem}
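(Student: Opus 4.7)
The starting point is the Pittel--Irving first moment bound, which gives $\mathbb{E}[X] \leq e^{1/2}/2 + o(1) = O(1)$. Markov's inequality alone only yields $\mathbb{P}(X \geq 1) = O(1)$, so to improve on this one must extract more from the first moment. My plan is a \emph{``one implies many''} amplification: show that on the event $\{X \geq 1\}$, the instance in fact admits at least $n^{1/17}$ stable matchings, either deterministically or with high conditional probability. Combined with $\mathbb{E}[X] = O(1)$ and Markov's inequality, this gives the theorem via
\[
\mathbb{P}(X \geq 1) \leq \mathbb{P}\bigl(0 < X < n^{1/17}\bigr) + \mathbb{P}\bigl(X \geq n^{1/17}\bigr),
\]
where the first term is controlled by amplification and the second by Markov applied to $\mathbb{E}[X]$.

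The natural vehicle for amplification is the rotation structure of Irving's algorithm for stable roommates. Given a stable matching $M$, other stable matchings are obtained via cyclic \emph{rotations} --- local modifications that cyclically permute partners along a short cycle of agents. I would introduce a bivariate count $Y = |\{(M,R) : M \text{ stable and } R \text{ is a valid rotation at } M\}|$, and aim to establish two facts: (i) $Y \geq n^{1/17} \cdot X$ with high probability on $\{X \geq 1\}$, and (ii) $\mathbb{E}[Y] = O(1)$, via an integral over rank profiles extending the Pittel--Irving computation to account for the extra rotation constraints. Together these yield the theorem.

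The main obstacle is (i): forcing many rotations per stable matching. Since a stable roommates instance can have a unique stable matching, (i) must be probabilistic rather than deterministic and must exploit the randomness of the preferences crucially. The most plausible route is a \emph{planted model} analysis: sample an instance with a prescribed $M$ fixed to be stable, study the conditional law of the remaining preferences, and show that short rotations are abundant in this planted law (driven by the fact that in a typical stable matching the ranks $r_a(\text{partner of }a)$ are of order $\sqrt{n}$, leaving ample room for rotation cycles). Transferring this conclusion back to the uniform model via a likelihood-ratio or switching argument, and carefully bookkeeping over all candidate $M$'s using the lattice-like structure of the solution set, is where the bulk of the technical work would lie; the specific exponent $1/17$ is presumably what this tradeoff yields after optimization.
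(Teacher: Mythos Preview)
Your overarching plan---condition on a fixed matching $\Pi$ being stable (your ``planted model''), show $X\geq n^{c}$ with high conditional probability, then finish with Markov on $\E[X]$---is exactly the paper's strategy. The gap is in the amplification mechanism. Your pair (i)--(ii) is internally inconsistent once (i) is read in the planted sense you describe: if $\P(Y<n^{1/17}X\mid\Pi)\leq\delta$ for each fixed $\Pi$, then summing over $\Pi$ gives $\E[X\one_{Y<n^{1/17}X}]\leq e^{1/2}\delta$, whence $\E[Y]\geq n^{1/17}\E[X\one_{Y\geq n^{1/17}X}]\geq n^{1/17}(e^{1/2}-o(1))$, contradicting (ii). Concretely, the conditional computation shows that the number of short single-cycle stable neighbours of $\Pi$ (your ``rotations'') has conditional mean only $\Theta(\log n)$: summing $\P(\Pi_1\mid\Pi)\sim 2n^{-\nu}$ over the $\sim n^{\nu}/(2\nu)$ choices of a single-cycle $\Pi_1$ with $|\Pi\triangle\Pi_1|=2\nu$ yields $\sum_{\nu\leq n^\alpha}1/\nu\sim\alpha\log n$. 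So (i) fails---you get logarithmically many rotations, not polynomially many.

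The missing idea is a \emph{combination step} that converts $\log n$ into $n^{\Theta(1)}$. After establishing that there are $\gamma\log n$ single-cycle stable neighbours $\Pi_1,\dots,\Pi_k$ with conditional failure probability $n^{-\Omega(1)}$ (this already requires a Poisson-type negative exponential moment, since second moment alone gives failure $O(1/\log n)$, too weak to beat the Markov term), the paper proves two structural facts, each with conditional failure $n^{-\Omega(1)}$: (a) the cycles $\Pi_j\triangle\Pi$ are pairwise vertex-disjoint, and (b) for every subset $S\subseteq[k]$ the matching $\Pi_S$ with $\Pi_S\triangle\Pi=\bigcup_{j\in S}(\Pi_j\triangle\Pi)$ is again stable. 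Together these give $X\geq 2^{\gamma\log n}=n^{\gamma\log 2}$; the exponent $1/17$ comes from optimizing the cycle-length cutoff $n^\alpha$, $\gamma$, and the exponential-moment parameter. Your outline has the right skeleton but is missing disjointness-plus-combinability, which is where all the polynomial gain comes from.
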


This complements  Pittel's work \cite{P:93} which proved a lower bound of $\P(X \geq 1) \geq \Omega(n^{-1/2})$ by the second moment method, and thus shows that the probability there is a stable matching decays polynomially.  We first discuss prior work on the problem and then outline our approach.

\subsection{Prior Work}
The study of the stable roommates problem initially focused on efficiently finding a stable matching, or deciding that none exist, under arbitrary preferences as posed by Knuth \cite{knuth19761mariages,K:97}.  Knuth in fact conjectured that deciding if an instance of the stable roommates problem has a solution was NP-hard.  This question was negatively resolved by Irving \cite{I:85} in 1985, who extended the iterative algorithm of McVitie and Wilson \cite{MW:71} (which in turn was inspired by the original Gale--Shapley algorithm) to find a stable matching.  This provides a polynomial-time solution to the decision problem of the existence of a stable matching in the stable roommates problem.  Irving also discusses the problem with random preferences.  Based on simulations up to $n = 90$, Irving notes ``The proportion of problem instances of a given size $n$ for which a stable matching exists is clearly a matter of some interest. The computational
evidence suggests that this proportion decreases as $n$ increases, but it is not 
clear whether this proportion tends to a positive limit as $n$ grows large.'' 

Irving's algorithm works with objects he calls ``all-or-nothing cycles'' which restrict the collection of possible partners in a stable matching; his algorithm then either produces a stable matching or a certificate that none exist. 
Seeking to find a ``compact condition that would support the existence
or non-existence of a solution,'' in 1991 Tan \cite{T:91}  introduced the notion of a stable partition and proved that a stable partition always exists under arbitrary preferences for the stable roommates problem.  A stable partition is a generalization of a stable matching which allows for stable cycles rather than just stable pairs.  Tan observes  that the presence of an odd stable cycle is precisely the obstruction to a stable matching. Tan \cite{T:91a} also proved a number of remarkable properties about stable partitions. Notably, for any instance of the stable roommates problem, the collection of odd cycles in every stable partition must be identical. These rigid structural properties motivated both algorithmic ideas and a line of work analyzing the random instance of the problem.  

The question of the stable roommates problem with uniformly random preferences was first raised by Irving \cite{I:85} in 1985, and it was subsequently conjectured by Gusfield and Irving \cite{GI:89} in their 1989 monograph on stable matchings that typically there is no solution.  
Work on this problem has been led by sequences of works of Pittel across the past three decades. As a natural first step, Pittel's 1993 work \cite{P:93}---which may be viewed as a follow-up to an analogous prior work of his on the bipartite case \cite{pittel1992likely}---computes the expectation and variance of the number of stable matchings, finding that $\E[X] = (1+o(1))e^{1/2}$ and $\E[X^2] = \Theta(n^{1/2})$. While this does not imply any upper bound on the probability of a stable matching existing, it proves the only known lower bound that $\P(X \geq 1) = \Omega(n^{-1/2})$. Moreover, his work expressed the probability that any pair of fixed matchings are both stable as an $n$-dimensional integral, and computed the first order asymptotics for this integral; these two tools play a central role in many follow-up works about stable roommates, including ours. 

The first and only non-trivial upper bound was proven by Pittel and Irving \cite{P:94} shortly thereafter by analyzing Tan's algorithm for finding a stable partition. They show that if the algorithm finds a stable matching, then it must find a structure which they call a core configuration, which is closely related to the proposal sequence in the execution of the algorithm. By similar computations, they determine that the expected number of such core configurations is $e^{1/2}/2$, which yields an upper bound on the probability a stable matching exists by the first moment method. 

Following the structural ideas of Tan, an alternative route to ruling out stable matchings is by showing the existence of a stable partition with an odd cycle with high probability. Once again, Pittel \cite{P:19} in 2019 manages to express the relevant probabilities as a high-dimensional integral and compute the first-order asymptotics, showing that the expected number of stable partitions with an odd cycle is $\Theta(n^{1/4})$ and that the second moment is $\Theta(n^{3/4})$. While this does not imply that an odd cycle exists with high probability, the computations imply a number of precise results about nearly stable matchings and maximal stable matchings in the random instance.  In a similar flavor, Mertens \cite{mertens2015small} applied the principle of inclusion and exclusion to obtain an alternating sum for the probability $\P(X\geq 1)$ which is quickly intractable but yields precise values for small $n$.

The efficient algorithms of Irving and Tan have been implemented to simulate uniformly random instances of the stable roommates problem in an effort to support (or refute) the conjecture. The most detailed simulations to date are from work of Mertens \cite{M:15}, who makes the extremely precise conjecture that $\P(X \geq 1) = (1+o(1))e\sqrt{\frac{2}{\pi}} n^{-1/4}$. In making this conjecture, Mertens notes that his simulations support the conjecture that $\P(X \geq 1) \to 0$, which we confirm.  

\subsection{Proof Strategy}
In contrast to all previous works on the problem, we pivot away from analyzing the odd cycle obstruction \cite{T:91, P:19} and the core configurations of Tan's algorithm \cite{P:94} and return to studying the number of stable matchings directly. Our main approach is to show that conditioned on the event that a fixed matching is stable, then in fact \emph{many} matchings are stable with high probability.  In order to execute this strategy, we need to consider the probability that multiple matchings are stable simultaneously. Various works of Pittel \cite{P:93,P:93a,P:94,P:19} prove estimates for the case of one or two matchings being stable by analyzing high-dimensional integrals; while certain aspects of our approach are inspired by Pittel, we analyze various probabilities conditioned on the stability of a fixed matching by working with the conditional probability directly. This leads to both conceptual and technical advantages that we discuss below. We break our discussion of our strategy into two parts, where we first discuss the bird's eye view and then discuss our conditional approach to stability probabilities.

\subsubsection{A bird's eye view of the proof}
Letting $\Pi$ be a matching on $n$ and $\cS$ the set of stable matchings, we want to show that for some parameters $c_1,c_2 > 0$ we have \begin{equation}\label{eq:conditional-X-bound}
    \P(X < n^{c_1} \,|\,\Pi \in \cS) \leq n^{-c_2}
\end{equation}
at which point Markov's inequality---paired with Pittel's bound $\E X = O(1)$---will complete the proof.  To prove \eqref{eq:conditional-X-bound} we will need to understand the landscape of possible stable matchings. 

If we consider another matching $\Pi_1$, the symmetric difference $\Pi \triangle \Pi_1$ consists of cycles of even length of size at least $4$.  To begin, we want to  understand the probability that $\Pi_1$ is stable conditioned on $\Pi$ being stable. We will see that if $\Pi_1 \triangle \Pi$ has exactly $\mu$ cycles then
\begin{equation}\label{eq:two-point-intro}
    \P(\Pi_1 \in \cS \,|\, \Pi \in \cS) \sim 2^\mu n^{-|\Pi_1 \setminus \Pi|} \,.
\end{equation}
A version of this is implicit in Pittel's works \cite{P:93,P:19} if one combines his two point estimate for $\P(\Pi_1 \in \cS \wedge \Pi \in \cS)$ with Stirling's formula, although we reprove this estimate by working directly with the conditional probability (Lemma \ref{lem:first-moment}).  

The simplest sort of matchings to consider are those for which $\Pi_1 \triangle \Pi$ consists of a single cycle.  Letting $\cM_\nu^\circ$ denote the set of $\Pi_1$ so that $\Pi_1 \triangle \Pi$ is a cycle of length $2\nu$, one may sum \eqref{eq:two-point-intro} to see that $\E|\cM_\nu^\circ \cap \cS| \approx \nu^{-1}.$  Setting $\cM_{\leq \nu}^\circ = \cup_{j \leq \nu} \cM_j^\circ$ and summing, we see that $\E|\cM_{\leq n^c}^\circ \cap \cS| = \Theta(\log n)$. In particular, there are a growing number of such single-cycle matchings on average.

Our approach is to first show that $|\cM_{<n^c}^\circ \cap \cS| = \Theta(\log n)$ with probability at least $1 - n^{-\Omega(1)}$. While an application of the second moment method yields this with probability $1-\Omega(1/\log n)$, we perform a more detailed analysis to obtain the stronger high probability guarantee. We prove this in two steps: 
\begin{enumerate}[label=\arabic*.]
\item We will show that with high probability (conditioned on $\Pi \in \cS$), all pairs $\Pi_1,\Pi_2 \in \cM_{\leq n^c}^\circ \cap \cS$ have $(\Pi_1 \setminus \Pi) \cap (\Pi_2 \setminus \Pi) = \emptyset$.  Heuristically, if the overlap consists of $e$ edges and $v$ vertices, we get an $n^{-v}$ fraction of possible pairs whereas the joint probability is $n^e$ times as large. Since the intersection of cycles is a collection of paths, we have $v \geq e+1$, and so only a vanishing fraction of pairs of matchings should overlap. This is formalized using a version of \eqref{eq:two-point-intro} and is the content of Lemma \ref{lem:overlapping-cycles-rare}.  

\item We next show approximate independence: if $\Pi_j \in \cM_{\leq n^c}^\circ$ for $j \leq n^{o(1)}$ are so that $(\Pi_j \triangle \Pi)_{j}$ are disjoint, then \begin{equation}\label{eq:approx-indep-intro}
    \P\left(\bigwedge_j \Pi_j \in \cS \,|\,\Pi \in \cS \right) \sim \prod_{j} \P\left(\Pi_j \in \cS \,|\,\Pi \in \cS \right)\,.
\end{equation}
The intuition from \eqref{eq:two-point-intro} is that conditioned on $\Pi \in \cS$, the events $\{\Pi_j \in \cS\}$ depend mostly  on the symmetric differences $\Pi_j \triangle \Pi$, which are disjoint. We formalize a version of \eqref{eq:approx-indep-intro} in Corollary \ref{cor:k-factor}.  
\end{enumerate}
From here, we prove that $|\cM^\circ_{\leq n^c} \cap \cS|$ behaves roughly like a Poisson random variable by considering a negative exponential moment (see the proof of Lemma \ref{lem:few-cycles-rare}). This shows $|\cM_{\leq n^c}^\circ \cap \cS| = \Theta(\log n)$ with probability at least $1 - n^{-\Omega(1)}$.  

To upgrade to showing that $X = n^{\Omega(1)}$ with high probability, we rely on a structural quasirandomness event on the space of stable matchings. We think of the elements of $\cM^\circ_{\leq n^c} \cap \cS$ as building blocks for other stable matchings. Recall that we showed that cycles corresponding to elements in $\cM^\circ_{\leq n^c} \cap \cS$ are typically disjoint. Thus, for a collection $\Pi_1,\ldots,\Pi_k \in \cM^\circ_{\leq n^c} \cap \cS$, we can combine the cycles and consider the matching $\Pi_{1,2,\ldots,k}$ which has $\Pi_{1,2,\ldots,k} \triangle \Pi = \bigcup_{j = 1}^k (\Pi_j \triangle \Pi)$.  We show that with high probability, $\Pi_{1,2,\ldots,k}$ is \emph{also} stable for every collection $\Pi_1, \ldots, \Pi_k \in  \cM^\circ_{\leq n^c} \cap \cS$.  We show this by proving that the event $\{\Pi_1 \in \cS, \Pi_2 \in \cS, \Pi_{1,2} \not\in \cS\}$ has small probability relative to $\{\Pi_1 \in \cS, \Pi_2 \in \cS\}$, then union bounding over all pairs $\Pi_1,\Pi_2$ with $|\Pi_1 \triangle \Pi|, |\Pi_2 \triangle \Pi| \leq n^c$ (Lemma \ref{lem:combine-cycles-rare}).  This means that we typically have $X \geq 2^{|\cM_{\leq n^c}^\circ \cap \cS|} = n^{\Omega(1)}$, proving \eqref{eq:conditional-X-bound}.

\subsubsection{A conditional approach to stability probabilities}
We now discuss the technical advantages of working with the conditional measure directly. Following the works of Pittel \cite{P:93} and Knuth before him \cite{knuth19761mariages,K:97}, we generate the preference orderings by sampling $X_{i,j} \sim \mathrm{Unif}([0,1])$ independently for all $i \neq j$. Preferences are then defined as follows: $i$ prefers $a$ to $b$ if $X_{i,a} < X_{i,b}$. A matching $\Pi$ is then stable if for every $(i, j) \not\in \Pi$, either $X_{i, \Pi(i)} < X_{i,j}$ or $X_{j, \Pi(j)} < X_{j,i}$. Writing $x_i = X_{i,\Pi(i)}$, the probability that $\Pi$ is stable is computed by integrating the function $\prod_{ij \not\in \Pi} (1-x_ix_j)$ (see \cite{P:93} or Lemma \ref{lem:single:exact}).

To check the stability of another matching $\Pi_1$, we additionally need to check that for each $(i,j) \notin \Pi_1$ we have either $X_{i,\Pi_1(i)}< X_{i,j}$ or $X_{j,\Pi_1(j)}< X_{j,i}$. Set $V$ to be the vertex set of $\Pi \triangle \Pi_1$. Writing $y_i = X_{i, \Pi_1(i)}$ for $i \in V$ and $y_i = x_i$ for $i \in V^c$, Pittel computes the probability that $\Pi$ and $\Pi_1$ are simultaneously stable by integrating the function $\prod_{ij \not\in \Pi \cup \Pi_1} (1-x_ix_j-y_iy_j+(x_i\land y_i)(x_j\land y_j))$ directly (see also Lemma \ref{lem:two-point}). 

Rather than integrate over $\bx$ and $\by$ directly, we take a different approach and condition on the stability of $\Pi$ by dividing by $\prod_{ij \not\in \Pi} (1-x_ix_j)$. Notice that all terms with $(i,j) \in V^c \times V^c$ cancel, and thus we only need to consider $i \in V$. We will show that the contribution from $V \times V$ is negligible when $|V| \leq n^{1/2 - o(1)}$, so we only need to consider $j \in V^c$. Setting $A = \{i \in V: y_i < x_i\}$ and $B = \{i \in V: y_i > x_i\}$, the terms with $i \in A$ simplify to $\one\{y_i < x_i\}$ and the terms with $i \in B$ simplify to $(1-y_ix_j)\one\{y_i > x_i\}$. Thus, the conditional probability is computed by integrating a function like $\prod_{i \in A} \one\{y_i<x_i\} \prod_{ij \in B \times V^c} \frac{1-y_ix_j}{1-x_ix_j}\one\{y_i>x_i\}$. 

Once we condition on $\Pi \in \cS$, then as a guiding heuristic one may think of $x_i$ as behaving approximately like i.i.d.\ exponential random variables of mean $n^{-1/2}$. Thus, the integral of $\one\{y_i < x_i\}$ is roughly the mean of $x_i$ and contributes $n^{-1/2}$ for each $i \in A$. Moreover,
\[ \prod_{ij \in B \times V^c} \frac{1-y_ix_j}{1-x_ix_j} \approx \prod_{ij \in B \times V^c} \exp(-(y_i-x_i)x_j) = \prod_{i \in B} \exp(-(y_i-x_i)\sum_j x_j). \]
Notice now that $\sum_j x_j$ should be well-concentrated around $\sqrt{n}$, and thus $y_i$ behaves like $x_i + \mathrm{Exp}(\sqrt{n})$ and so integrating also yields a factor of $n^{-1/2}$ for each $i \in B$. Thus, we heuristically derive a conditional probability of $n^{-|V|/2} = n^{-|\Pi_1\setminus \Pi|}$, and the $2^\mu$ arises from all possible choices for the sets $A$ and $B$. Notably, our approach does not require any of the precise manipulations and Laplace method estimates performed by Pittel. This allows us to bound and approximate significantly more complex events, such as the probability $k$ matchings are simultaneously stable (see Corollary \ref{cor:k-factor} and Remark \ref{remark:k-point}).

\subsection{Notation}
We use the standard Landau notations $O(\cdot),\Omega(\cdot),\Theta(\cdot),o(\cdot)$ where the asymptotic is as $n \to \infty$.  We often prove matching asymptotic upper and lower bounds and so use the notion $\pm$ to emphasize that the asymptotic holds with each sign.  Finally, we use $\widetilde{O}(\cdot)$ to denote $O(\cdot)$ up to a polylogarithmic term.

\section{Proof of Theorem \ref{th:no-stable-matching}}\label{sec:main}

In this section we state our main lemmas and use them to prove our main result.  For the sake of clarity we postpone the technical work of proving the lemmas to Sections \ref{sec:setup} and \ref{sec:lemmas}, and only introduce the notation relevant to this section here. 

For an even integer $n$, we will consider a fixed matching $\Pi$ and set $\mathcal{S}$ to be the set of stable matchings.  
We are interested in the random variable $X = |\mathcal{S}|$ when $\Pi$ is conditioned to be stable.  Throughout, the probability is taken with respect to uniformly random preference lists.  Pittel's work \cite{P:93} proves an asymptotic for the probability a given matching is stable: 

\begin{lemma}[Pittel, {\cite[Theorem 1]{P:93}}] \label{lem:Pi-stable-computation}
    We have $\P(\Pi \in \cS) = \frac{e^{1/2} }{(n-1)!!}(1\pm O(n^{-1/3}))\,. $
\end{lemma}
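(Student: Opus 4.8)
The plan is to carry out the Laplace-type evaluation of the stability probability directly; this is essentially Pittel's original argument. As $K_n$ is vertex-transitive we may assume $\Pi$ matches $2\ell-1$ with $2\ell$ for $1\le\ell\le n/2$. In the $X_{i,j}$-model, writing $x_i=X_{i,\Pi(i)}$, the events ``$\{i,j\}$ is not a blocking pair'' over unordered $\{i,j\}\notin\Pi$ are conditionally independent given $(x_i)_i$, each of conditional probability $1-x_ix_j$, so integrating out the remaining randomness yields the integral representation (see Lemma~\ref{lem:single:exact})
\[
\P(\Pi\in\cS)=\int_{[0,1]^n}\prod_{\{i,j\}\notin\Pi}(1-x_ix_j)\,d\bx=:I .
\]
Set $S=\sum_i x_i$, $P_2=\sum_i x_i^2$, $M=\sum_{\ell=1}^{n/2}x_{2\ell-1}x_{2\ell}$, so that $\sum_{\{i,j\}\notin\Pi}x_ix_j=\tfrac12(S^2-P_2)-M$; the goal is $I=\tfrac{e^{1/2}}{(n-1)!!}\,(1\pm O(n^{-1/3}))$.

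First I would localize to $R=\{\bx\in[0,1]^n:\max_i x_i\le n^{-1/2+\delta}\}$ for a small fixed $\delta>0$. On $R$ one has $x_ix_j\le n^{-1+2\delta}$, so expanding $\log(1-x_ix_j)=-x_ix_j-\tfrac12(x_ix_j)^2+O((x_ix_j)^3)$ and using $\sum_{\{i,j\}\notin\Pi}(x_ix_j)^2=\tfrac12 P_2^2+O(n^{-1+4\delta})$ gives, uniformly on $R$,
\[
\prod_{\{i,j\}\notin\Pi}(1-x_ix_j)=\exp\!\big(-\tfrac12 S^2+\tfrac12 P_2+M-\tfrac14 P_2^2+O(n^{-1+6\delta})\big).
\]
For $R^c$, the bounds $1-u\le e^{-u}$ and $M\le\tfrac12 P_2$ give $\prod_{\{i,j\}\notin\Pi}(1-x_ix_j)\le e^{-S^2/2+P_2}$, and since $e^{P_2}\le\prod_i(1+2x_i^2)$ on $[0,1]^n$ one may expand this product and integrate term by term; requiring some coordinate to exceed $n^{-1/2+\delta}$ then forces a factor $\exp(-n^{\Omega(1)})$, because its companion coordinates contribute a sum of order $\sqrt n$. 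Hence $\int_{R^c}\prod(1-x_ix_j)\,d\bx$ is negligible relative to $n^{-1/3}/(n-1)!!$, and $I=(1+O(n^{-1+6\delta}))\int_R e^{-S^2/2}e^{P_2/2+M-P_2^2/4}\,d\bx$.

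Next I would evaluate by conditioning on $S$. By the coarea formula --- and since the box constraint $x_i\le1$ is inactive with super-polynomially high probability on a simplex slice $\{\sum x_i=s\}$ when $s=\Theta(\sqrt n)$ --- writing $x_i=sE_i/(\sum_j E_j)$ with $(E_i)$ i.i.d.\ $\mathrm{Exp}(1)$,
\[
I=(1+o(1))\int_0^\infty\frac{s^{n-1}}{(n-1)!}\,\EE\!\big[e^{P_2/2+M-P_2^2/4}\big]\,e^{-s^2/2}\,ds ,
\]
where the expectation is over $(E_i)$ at fixed $s$. The explicit moments of the $E_i$ give $\EE[P_2]=\tfrac{2s^2}{n}(1+o(1))$ and $\EE[M]=\tfrac{s^2}{2n}(1+o(1))$, with $P_2$ and $M$ concentrated up to $O(n^{-1/2})$, so $\EE[e^{P_2/2+M-P_2^2/4}]=\exp(\tfrac{3s^2}{2n}-\tfrac{s^4}{n^2}+o(1))$, which is $e^{1/2}(1+o(1))$ throughout the $O(1)$-window about $\sqrt n$ carrying essentially all the mass of $s^{n-1}e^{-s^2/2}$. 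Therefore
\[
I=(1+o(1))\,e^{1/2}\int_0^\infty\frac{s^{n-1}}{(n-1)!}e^{-s^2/2}\,ds=(1+o(1))\,e^{1/2}\,\frac{2^{n/2-1}\Gamma(n/2)}{(n-1)!}=(1+o(1))\,\frac{e^{1/2}}{(n-1)!!}.
\]
Tracking the three error contributions --- the tails ($\exp(-n^{\Omega(1)})$), the Taylor remainder on $R$ ($O(n^{-1+6\delta})$), and the conditional fluctuations ($O(n^{-1/2})$) --- and choosing, say, $\delta=1/9$, upgrades the $1+o(1)$ to $1\pm O(n^{-1/3})$.

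The hardest part is the quantitative work behind the last two steps: converting the heuristic ``$x_i$ behaves like $\mathrm{Exp}(n^{-1/2})$'' into sharp concentration and moment estimates for $P_2/2+M-P_2^2/4$ conditioned on $S=s$, uniformly over the relevant range of $s$, and verifying that the box constraint $\max_i x_i\le1$ costs only a super-polynomially small amount, both on $[0,1]^n$ and on the simplex slices. (Here one has to be careful: the naive bound $1-u\le e^{-u}$ followed by $P_2\le S$ overshoots by a factor $e^{\Theta(\sqrt n)}$, so the large coordinates must instead be handled via $e^{P_2}\le\prod_i(1+2x_i^2)$ and a dyadic decomposition.) The Dirichlet/exponential representation makes all the required moments explicit, so this is careful bookkeeping rather than a conceptual obstacle --- but it is where essentially all the work of the lemma lies.
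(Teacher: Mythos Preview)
Your proposal is correct and follows essentially the same route as the paper (which in turn reproduces Pittel's argument): integral representation via Lemma~\ref{lem:single:exact}, localization to a region where each $x_i$ is $O(n^{-1/2+o(1)})$, Taylor expansion of $\log(1-x_ix_j)$, passage to the simplex/Dirichlet representation with i.i.d.\ exponentials, and explicit evaluation of the resulting Gamma integral. The one organizational difference is that the paper packages the localization into the quasirandomness event $\cG_\Pi$ of \eqref{eq:G-Pi} and handles the complement via the uniform pointwise bound $\prod_{ij\notin\Pi}(1-x_ix_j)\le e^{81/16}e^{-S^2/2}$ of Lemma~\ref{lem:integrand-UB} (obtained by keeping the quadratic term $\log(1-u)\le -u-u^2/2$ and completing the square in $P_2$), which is cleaner than your $e^{P_2}\le\prod_i(1+2x_i^2)$ expansion for the tail region $R^c$.
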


Lemma \ref{lem:Pi-stable-computation} also follows from Lemmas \ref{lem:prob-G-UB}, \ref{lem:single:exact} and \ref{lem:integral-compute}. We will use $\Pi$ as shorthand for the event that $\Pi \in \cS$; that is, $\P(\Pi)$ is the probability $\Pi$ is stable and the notation $\P(\, \cdot \, | \, \Pi)$ and $\E[\, \cdot \, | \, \Pi]$ denotes probability and expectation with respect to the measure conditioned on this event. 

For any other matching $\Pi_1$, the symmetric difference $\Pi \triangle \Pi_1$ is a disjoint union of cycles of even length.  We set $\cM^\circ$ to be the set of matchings that differ from $\Pi$ on a single cycle.  Set $\mathcal{M}_\nu^\circ$ to be the subset that differ on a cycle of length exactly $2\nu$ and $\mathcal{M}_{\leq \nu}^\circ = \bigcup_{j \leq \nu}\mathcal{M}_j^\circ $.  
 We will lower bound $X$ by understanding $X_\nu^\circ = |\mathcal{S} \cap \mathcal{M}_\nu^\circ|$ and $X_{\leq \nu}^\circ = |\mathcal{S} \cap \mathcal{M}_{\leq \nu}^\circ| = \sum_{j=2}^\nu X_j^\circ$ instead. 

To begin, we bound the probability that there are two stable matchings differing from $\Pi$ on single cycles that overlap.  We note that  $\Pi_1 \triangle \Pi$ and $\Pi_2 \triangle \Pi$ are edge disjoint if and only if they are vertex disjoint, and so simply write $(\Pi_1 \triangle \Pi) \cap (\Pi_2 \triangle \Pi) = \emptyset$ to indicate that $\Pi_1 \triangle \Pi$ and $\Pi_2 \triangle \Pi$ are disjoint. 
For a parameter $\alpha < 1/4$ we define our first bad event 
\begin{equation}\label{eq:overlapping-cycles-def}
    \cD_1 := \{ \exists~\Pi_1,\Pi_2 \in \cM_{\leq n^{\alpha}}^\circ \cap \cS : (\Pi_1 \triangle \Pi) \cap (\Pi_2 \triangle \Pi) \neq \emptyset \}     
\end{equation}
to be the event that a pair of matchings that overlap are both stable, and claim that it is rare.  
\begin{lemma}\label{lem:overlapping-cycles-rare}
    For $\alpha < 1/4$ we have $
        \P(\cD_1 \,|\, \Pi) \leq n^{-1 + 4\alpha + o(1)}\,.
    $
\end{lemma}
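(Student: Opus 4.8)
The plan is a union bound over bad pairs. Setting $C_i:=\Pi_i\triangle\Pi$, we have
\[
\P(\cD_1\mid\Pi)\ \le\ \sum_{\substack{\Pi_1,\Pi_2\in\cM^\circ_{\le n^\alpha}\\ C_1\cap C_2\ne\emptyset}}\P(\Pi_1\in\cS\wedge\Pi_2\in\cS\mid\Pi)\,,
\]
and I would organize this sum by $\nu_1,\nu_2$ (where $\Pi_i\in\cM^\circ_{\nu_i}$) and by the combinatorial ``shape'' of $C_1\cap C_2$. Since $\Pi_1\ne\Pi_2$ forces $C_1\ne C_2$, the graph $C_1\cap C_2$ is a proper subgraph of the cycle $C_1$ and hence a disjoint union of $p\ge1$ paths (an isolated shared vertex being a trivial path); writing $v,e$ for its numbers of vertices and edges, the crucial input is $v\ge e+p$. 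The goal is then to show that, for each fixed $\nu_1,\nu_2$ and each fixed shape with $p$ components, the expected number of pairs of that shape with both $\Pi_1,\Pi_2$ stable is $2^{O(p)}n^{-p+O(\alpha p)+o(1)}$; morally, each component of the overlap costs a factor $n^{-1}$.

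The key step is a two-matching analogue of the conditional estimate \eqref{eq:two-point-intro}. Extending the conditional-integral analysis behind the single-matching bound (Lemma~\ref{lem:first-moment}) by introducing both $y_i=X_{i,\Pi_1(i)}$ for $i\in V(C_1)$ and $z_i=X_{i,\Pi_2(i)}$ for $i\in V(C_2)$, and discarding the interactions supported inside $V(C_1)\cup V(C_2)$---which is permissible since $|V(C_1)\cup V(C_2)|=O(n^\alpha)=n^{1/2-\Omega(1)}$, so the estimates of Sections~\ref{sec:setup}--\ref{sec:lemmas} apply---I expect a bound of the form
\[
\P(\Pi_1\in\cS\wedge\Pi_2\in\cS\mid\Pi)\ \le\ 2^{O(p)}\,n^{-\nu_1-\nu_2+e'+o(1)}\,,
\]
where $e'$ is the number of edges of $C_1\cap C_2$ belonging to $\Pi_1$ (equivalently to $\Pi_2$), i.e.\ the common non-$\Pi$ edges of $\Pi_1$ and $\Pi_2$. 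On such an edge the coordinates $y$ and $z$ are forced to coincide, which is precisely what inflates the conditional probability above the ``independent'' value $n^{-\nu_1-\nu_2}$; on each of the remaining $e-e'$ shared edges (which lie in $\Pi$) a stability constraint simply disappears but no inflation survives in the conditional measure.

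Pairing this with the count finishes the argument. There are at most $n^{\nu_1}$ matchings in $\cM^\circ_{\nu_1}$, and at most $(2\nu_1)^p$ ways to place the $p$ components of a prescribed shape along $C_1$; for a fixed such placement, a matching $\Pi_2\in\cM^\circ_{\nu_2}$ realizing it has its cycle pinned through the $v$ prescribed vertices and is therefore forced to contain $v-(e-e')$ of the $\Pi$-edges out of which $C_2$ is assembled, so there are at most $n^{\nu_2-(v-(e-e'))+o(1)}$ such $\Pi_2$. Multiplying these with the probability bound, the exponents combine as $\nu_1+(\nu_2-v+e-e')+(-\nu_1-\nu_2+e')=e-v\le-p$, giving $2^{O(p)}n^{-p+O(\alpha p)+o(1)}$ per shape. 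Summing over the $n^{O(\alpha p)}$ shapes with $p$ components (their lengths are each $\le 2n^\alpha$) and then over $p\ge1$ yields $n^{-1+O(\alpha)}$ once $\alpha<1/2$ (the $p=1$ term dominates), and summing over the $n^{2\alpha}$ choices of $(\nu_1,\nu_2)$ with $\nu_1,\nu_2\le n^\alpha$ gives $\P(\cD_1\mid\Pi)\le n^{-1+4\alpha+o(1)}$, with $\alpha<1/4$ exactly the range in which this vanishes.

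The main obstacle is the two-matching conditional estimate in the overlapping case. When $C_1$ and $C_2$ are disjoint the events $\{\Pi_1\in\cS\}$ and $\{\Pi_2\in\cS\}$ decouple given $\Pi$ (the content of \eqref{eq:approx-indep-intro}), but when the cycles overlap the coordinates attached to $V(C_1)\cap V(C_2)$ genuinely interact, and one must control this interaction precisely---showing it inflates the probability by no more than $n^{e'+o(1)}$---as well as verify that the discarded terms supported on $V(C_1)\cup V(C_2)$, the analogue of the $V\times V$ contribution in the single-matching analysis, remain lower order; here again the bound $|V(C_1)\cup V(C_2)|\le 4n^\alpha$ with $\alpha<1/4$ is what makes this work. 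A secondary, purely bookkeeping, point is that the na\"ive heuristic split---``an $n^{-v}$ fraction of pairs, each $n^{e}$ times likelier''---is not valid term by term (a shared $\Pi$-edge, for instance, contributes to $v$ without inflating the probability), so the counting and probability bounds must be combined at the level of a single overlap shape, where the two discrepancies cancel and the product is genuinely $n^{e-v+o(1)}$.
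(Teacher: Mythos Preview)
Your proposal is correct and takes essentially the same approach as the paper: a union bound over intersecting pairs, combining a two-matching conditional probability estimate (the paper's Lemma~\ref{lem:no-overlaps-undirected}, which gives exactly your $n^{-\nu_1-\nu_2+e'}$ since $e'=|(\Pi_1\setminus\Pi)\cap(\Pi_2\setminus\Pi)|$ and $|\Pi_1\cup\Pi_2\setminus\Pi|=\nu_1+\nu_2-e'$) with a combinatorial count of pairs having a prescribed overlap (the paper's Lemma~\ref{lem:counting}). The only difference is bookkeeping---the paper parameterizes the overlap in the contracted model (each $\Pi$-edge collapsed to a node) by $t=e'$ and the number $s$ of components there, rather than your $(v,e,p,e')$, which makes the alternating structure automatic and spares the separate tracking of shared $\Pi$-edges and forced $\Pi$-partners that you flag in your final paragraph.
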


Next, we will show that $X_{\leq \nu}^\circ$ is unlikely to be small.  For parameters $\alpha < 1/4$ and $\gamma > 0$ we define the bad event \begin{equation}\label{eq:few-cycles-def}
    \cD_2 := \left\{X_{\leq n^\alpha}^\circ < \gamma \log n \right\}\,.
\end{equation}
The intuition here is that when the cycles are disjoint, as guaranteed by $\cD_1^c$, they behave roughly independently. This is reflected in the Poisson-like tail given by the second term in the statement below:
\begin{lemma}\label{lem:few-cycles-rare}
    For each $\alpha < 1/4$ and $\gamma,s > 0$ we have \begin{equation*}
        \P(\cD_2 \,|\,\Pi) \leq n^{-1 + 4\alpha + o(1)} + n^{\gamma s-\alpha(1 - e^{-s}) + o(1)} + n^{\gamma s-1/3 + \alpha(1 - e^{-s}) + o(1)}\,.
    \end{equation*}
\end{lemma}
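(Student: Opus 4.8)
The plan is to first remove the overlap event $\cD_1$ and then apply an exponential‑moment (Cramér–Chernoff) bound to $W := X_{\le n^\alpha}^\circ$ restricted to $\cD_1^c$. Writing $\theta := 1-e^{-s}$ and using $\{W<\gamma\log n\}\cap\cD_1^c \subseteq \{e^{-sW}\one_{\cD_1^c} > n^{-\gamma s}\}$,
\[
\P(\cD_2 \mid \Pi) \;\le\; \P(\cD_1 \mid \Pi) \;+\; \P(\cD_2 \cap \cD_1^c \mid \Pi) \;\le\; \P(\cD_1 \mid \Pi) \;+\; n^{\gamma s}\,\E\!\left[e^{-sW}\one_{\cD_1^c}\,\middle|\,\Pi\right].
\]
Lemma \ref{lem:overlapping-cycles-rare} bounds the first summand by $n^{-1+4\alpha+o(1)}$, which is the first term, so the task reduces to showing $\E[e^{-sW}\one_{\cD_1^c}\mid\Pi] \le n^{-\alpha\theta+o(1)} + n^{-1/3+\alpha\theta+o(1)}$.

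First I would pin down the conditional mean: summing the single‑matching estimate of Lemma \ref{lem:first-moment}, $\P(\Pi_1\in\cS\mid\Pi) = (1+o(1))\,2n^{-\nu}$ for $\Pi_1\in\cM_\nu^\circ$, against the count $|\cM_\nu^\circ| = \binom{n/2}{\nu}(\nu-1)!\,2^{\nu-1} = (1+o(1))\,n^\nu/(2\nu)$ gives $\E[X_\nu^\circ\mid\Pi] = (1+o(1))/\nu$ and hence $\lambda := \E[W\mid\Pi] = (1+o(1))\sum_{\nu=2}^{n^\alpha}\nu^{-1} = (1+o(1))\,\alpha\log n$. Then expand
\[
e^{-sW} = \prod_{\Pi_1\in\cM_{\le n^\alpha}^\circ}\!\bigl(1-\theta\,\one\{\Pi_1\in\cS\}\bigr) = \sum_{\cF\subseteq\cM_{\le n^\alpha}^\circ}(-\theta)^{|\cF|}\!\prod_{\Pi_1\in\cF}\one\{\Pi_1\in\cS\}.
\]
Multiplying by $\one_{\cD_1^c}$ annihilates every $\cF$ containing an overlapping pair (on $\cD_1^c$ two overlapping single‑cycle matchings cannot both be stable), and a factorial‑moment bound shows $\P(W\ge(\log n)^2\mid\Pi)$ is super‑polynomially small, so the terms with $|\cF|>(\log n)^2$ are negligible. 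For the surviving pairwise‑disjoint $\cF$ with $|\cF|\le(\log n)^2$, Corollary \ref{cor:k-factor} shows $\P(\bigwedge_{\Pi_1\in\cF}\Pi_1\in\cS\mid\Pi)$ equals $\prod_{\Pi_1\in\cF}\P(\Pi_1\in\cS\mid\Pi)$ up to a factor $(1+n^{-1/3+o(1)})^{|\cF|} = 1+o(1)$. Resumming the signed main parts telescopes, using $\sum_{\Pi_1}\P(\Pi_1\in\cS\mid\Pi)=\lambda$ and $\sum_{\Pi_1}\P(\Pi_1\in\cS\mid\Pi)^2 = O(n^{-2})$:
\[
\sum_{\cF}(-\theta)^{|\cF|}\prod_{\Pi_1\in\cF}\P(\Pi_1\in\cS\mid\Pi) = \prod_{\Pi_1}\bigl(1-\theta\,\P(\Pi_1\in\cS\mid\Pi)\bigr) = e^{-\theta\lambda+o(1)} = n^{-\alpha\theta+o(1)},
\]
which is the second term; the accumulated $n^{-1/3+o(1)}$ corrections, weighted by $\sum_{\cF}\theta^{|\cF|}|\cF|\prod_{\Pi_1\in\cF}\P(\Pi_1\in\cS\mid\Pi) = O(\lambda\,e^{\theta\lambda}) = n^{\alpha\theta+o(1)}$, contribute at most $n^{-1/3+\alpha\theta+o(1)}$, the third term.

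The step I expect to be the main obstacle is the last one: checking that passing to $\cD_1^c$ and to pairwise‑disjoint $\cF$ — and discarding the complementary $\one_{\cD_1}$ piece — does not cost more than the error terms already present. The discrepancy between the disjoint and unrestricted alternating sums is governed by the overlapping‑pair first moment $\sum_{\Pi_1,\Pi_2\ \mathrm{overlapping}}\P(\Pi_1\in\cS\wedge\Pi_2\in\cS\mid\Pi)\le n^{-1+4\alpha+o(1)}$ (which is exactly what drives Lemma \ref{lem:overlapping-cycles-rare}), multiplied by the harmless factor $\prod_{\Pi_1}(1+\theta\,\P(\Pi_1\in\cS\mid\Pi)) = n^{\alpha\theta+o(1)}$ that accounts for the remaining free choices; the cross term $\E[\,\cdot\,\one_{\cD_1}\mid\Pi]$ is controlled the same way after isolating a single overlapping stable pair and invoking approximate independence on the disjoint remainder, whose total stability weight $\sum_{\Pi'\ \mathrm{overlapping\ that\ pair}}\P(\Pi'\in\cS\mid\Pi) = n^{2\alpha-1+o(1)}$ is small and keeps the blow‑up under control. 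Because $\alpha<1/4$, each of these corrections is a polynomial factor smaller than $n^{-1/3+\alpha\theta+o(1)}$ and gets absorbed; the delicacy is purely in the bookkeeping — tracking how an overlapping pair can sit inside a collection, and verifying the margin survives the multiplicative $n^{\gamma s}$ from the Markov step. This is also precisely where the explicit $n^{-1/3}$ error in the conditional stability estimates and the hypothesis $\alpha<1/4$ both enter.
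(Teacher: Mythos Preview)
Your approach is essentially the paper's: split off $\cD_1$, take a negative exponential moment of $W = X_{\le n^\alpha}^\circ$, expand $e^{-sW}$ into a signed sum over collections of stable single-cycle matchings, restrict to pairwise-disjoint collections on $\cD_1^c$, feed those into the factorization of Corollary~\ref{cor:k-factor}, and separate the resulting main and $\widetilde O(n^{-1/3})$ error parts. The paper carries this out via the falling-factorial identity $e^{-sx}=\sum_{k\ge 0}\frac{(-1)^k\theta^k}{k!}(x)_k$ together with Lemma~\ref{lem:N_k-expectation}, which is exactly your subset expansion repackaged.

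Where you and the paper diverge is at the obstacle you flag. You compute $\E[e^{-sW}\one_{\cD_1^c}\mid\Pi]$ and must then strip the indicator $\one_{\cD_1^c}$ from each term before Corollary~\ref{cor:k-factor} (stated with $\one_{\cG_\Pi}$) can be applied. The paper avoids this entirely with a decoupling trick: it defines the random variable
\[
Y\;=\;\sum_{k=0}^{100\log n}\frac{(-1)^k\theta^k}{k!}\,N_k
\]
globally (with $N_k$ counting \emph{disjoint} stable $k$-tuples), observes that $Y=e^{-sW}$ on $\cG=\cG_\Pi\cap\cD_1^c\cap\{W\le 100\log n\}$, and then simply \emph{enlarges} the event,
\[
\P\bigl(e^{-sW}\ge n^{-s\gamma},\,\cG\mid\Pi\bigr)=\P\bigl(Y\ge n^{-s\gamma},\,\cG\mid\Pi\bigr)\le \P\bigl(Y\ge n^{-s\gamma},\,\cG_\Pi\mid\Pi\bigr)\le n^{s\gamma}\,\E\bigl[Y\one_{\cG_\Pi}\mid\Pi\bigr].
\]
Since $Y$ is built only from disjoint tuples and now carries only the $\cG_\Pi$ indicator, $\E[Y\one_{\cG_\Pi}]$ is read off directly from Lemma~\ref{lem:N_k-expectation} with no $\cD_1$ cross term to control.

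Your proposed handling of that cross term is not obviously sufficient. After union-bounding $\one_{\cD_1}$ by a sum over overlapping stable pairs $(\Pi',\Pi'')$, bounding $\E\bigl[\one\{\Pi',\Pi''\in\cS\}\sum_{\cF\text{ disj}}\theta^{|\cF|}\prod_{\Pi_1\in\cF}\one\{\Pi_1\in\cS\}\bigr]$ requires $(|\cF|+2)$-point stability estimates in which members of $\cF$ may themselves overlap $\Pi'$ or $\Pi''$; the paper does not develop such mixed estimates, and Corollary~\ref{cor:k-factor} does not apply. Even granting your heuristic numerics, an error of order $n^{-1+4\alpha+\alpha\theta+o(1)}$ is \emph{not} dominated by $n^{-1/3+\alpha\theta}$ once $\alpha>1/6$, so your absorption claim fails in part of the stated range $\alpha<1/4$. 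The enlargement trick is exactly what makes this bookkeeping unnecessary.
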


At the moment, we have only considered matchings that differ on a single cycle, and have deduced that there are logarithmically many. In fact, this would already suffice to prove an upper bound of $\Theta(1/\log n)$. We now see how to combine cycles with our next bad event. Here we set $\cM_{\leq n^{\alpha}}$ to be the set of all matchings $\Pi_1$ so that $|\Pi_1 \triangle \Pi| \leq n^{\alpha}$ and define 
\begin{align}
    \cD_3 := \bigg\{\exists~\Pi_1,\Pi_2 \in \cM_{\leq n^{\alpha}}& \cap \cS : (\Pi_1 \triangle \Pi) \cap (\Pi_2 \triangle \Pi) = \emptyset,  (\Pi_{1} \triangle \Pi) \cup (\Pi_2 \triangle \Pi) \text{ has} \leq 100\log n \text{ cycles}\nonumber \\
    &\text{ and } \Pi_{1,2} \not\in \cS \text{ where } \Pi_{1,2} \triangle \Pi = (\Pi_1 \triangle \Pi) \cup (\Pi_2 \triangle \Pi)\bigg\}\,.  \label{eq:combine-cycles-def}
\end{align}
The complement of this event is saying that whenever we have two stable matchings that differ from $\Pi$ on small disjoint sets, we can combine their symmetric differences to obtain a new stable matching. In particular, any disjoint subset of cycles counted in $X_{\leq n^\alpha}^\circ$ combines to form a new stable matching. Once again, we will show that this event is rare. 

\begin{lemma}\label{lem:combine-cycles-rare}
    For $\alpha < 1/4$ we have 
    \begin{equation*}
        \P(\cD_3 \,|\, \Pi) \leq n^{-1 + 4\alpha + o(1)}\,.
    \end{equation*}    
\end{lemma}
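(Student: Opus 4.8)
The plan is to union bound over all candidate pairs $(\Pi_1,\Pi_2)$ appearing in \eqref{eq:combine-cycles-def} and, for each, bound the probability of the bad configuration $\{\Pi_1\in\cS,\ \Pi_2\in\cS,\ \Pi_{1,2}\notin\cS\}$ relative to the probability $\{\Pi_1\in\cS,\ \Pi_2\in\cS\}$ that both are stable; summing this relative bound against $\sum_{(\Pi_1,\Pi_2)}\P(\Pi_1\in\cS,\ \Pi_2\in\cS\mid\Pi)$, which is controlled by Lemma~\ref{lem:first-moment} and Corollary~\ref{cor:k-factor}, then finishes the proof. The structural key is the observation that if $\Pi_1,\Pi_2\in\cS$ have disjoint symmetric differences with $\Pi$ and $\Pi_{1,2}\notin\cS$, then every blocking pair $(i,j)$ of $\Pi_{1,2}$ must have one endpoint in $V_1:=V(\Pi_1\triangle\Pi)$ and the other in $V_2:=V(\Pi_2\triangle\Pi)$: for any non-edge $(i,j)$ of $\Pi_{1,2}$ that is not of this mixed type, the partners $\Pi_{1,2}(i),\Pi_{1,2}(j)$ agree with the corresponding partners under $\Pi$, $\Pi_1$, or $\Pi_2$ (according to whether the endpoints lie in $V_1$, $V_2$, or outside $V_1\cup V_2$), so the stability inequality at $(i,j)$ is already implied by the stability of $\Pi$, $\Pi_1$, or $\Pi_2$.

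Granting this, for a fixed valid pair $(\Pi_1,\Pi_2)$ I would union bound over the at most $|V_1||V_2|\le n^{2\alpha}$ mixed pairs $(i,j)$, say with $i\in V_1$ and $j\in V_2$, and estimate $\P(\Pi_1\in\cS,\ \Pi_2\in\cS,\ (i,j)\text{ blocks }\Pi_{1,2}\mid\Pi)$. Writing $x_a=X_{a,\Pi(a)}$, $y_a^{(1)}=X_{a,\Pi_1(a)}$ and $y_a^{(2)}=X_{a,\Pi_2(a)}$, the pair $(i,j)$ blocks $\Pi_{1,2}$ exactly when $X_{i,j}\le y_i^{(1)}$ and $X_{j,i}\le y_j^{(2)}$; imposing in addition the stability inequalities of $\Pi_1$ and $\Pi_2$ at this same pair forces $i\in B_1:=\{a\in V_1: y_a^{(1)}>x_a\}$, $j\in B_2:=\{a\in V_2: y_a^{(2)}>x_a\}$, and
\[
X_{i,j}\in(x_i,\,y_i^{(1)}]\,, \qquad X_{j,i}\in(x_j,\,y_j^{(2)}]\,.
\]

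Now the heuristic of the introduction applies: conditioned on $\Pi\in\cS$ and $\Pi_1,\Pi_2\in\cS$ (which pins down the sets $B_1,B_2$), the increments $y_i^{(1)}-x_i$ and $y_j^{(2)}-x_j$ behave like independent $\mathrm{Exp}(\sqrt n)$ variables of mean $\asymp n^{-1/2}$, while $X_{i,j},X_{j,i}$ remain essentially uniform on $[0,1]$ (the conditioning only forbids a region of area $\asymp n^{-1}$ near the origin), so each mixed-pair event has conditional probability $n^{-1+o(1)}$ times $\P(\Pi_1\in\cS,\ \Pi_2\in\cS\mid\Pi)$. Summing over the $\le n^{2\alpha}$ mixed pairs gives
\[
\P(\Pi_1\in\cS,\ \Pi_2\in\cS,\ \Pi_{1,2}\notin\cS\mid\Pi)\ \le\ n^{2\alpha-1+o(1)}\,\P(\Pi_1\in\cS,\ \Pi_2\in\cS\mid\Pi)\,,
\]
and summing over valid pairs and using $\sum_{(\Pi_1,\Pi_2)}\P(\Pi_1\in\cS,\ \Pi_2\in\cS\mid\Pi)\le\E[\,|\cM_{\le n^\alpha}\cap\cS|^2\mid\Pi\,]=n^{2\alpha+o(1)}$ — the first moment being $\exp(\sum_{\nu\le n^\alpha/2}\nu^{-1}+O(1))=n^{\alpha+o(1)}$ after grouping matchings by the multiset of cycle lengths of $\Pi_1\triangle\Pi$ and applying Lemma~\ref{lem:first-moment}, with the second moment comparable by Corollary~\ref{cor:k-factor} and Lemma~\ref{lem:overlapping-cycles-rare} — gives $\P(\cD_3\mid\Pi)\le n^{4\alpha-1+o(1)}$, as desired. (The restriction to unions with at most $100\log n$ cycles is harmless for the counting, since dropping it only enlarges the sum; it ensures the number of cycles is $n^{o(1)}$ so Corollary~\ref{cor:k-factor} applies.)

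The main obstacle is making the per-pair three-point estimate rigorous: one must show that, in the measure conditioned on $\Pi,\Pi_1,\Pi_2\in\cS$, the vector $(X_{i,j},X_{j,i})$ lies in a product of intervals of length $\asymp n^{-1/2}$ with probability $n^{-1+o(1)}$. The subtlety is that $X_{i,j}$ already appears in the stability constraint of $\Pi_1$ at the pair $(i,j)$ and $X_{j,i}$ in that of $\Pi_2$, so these coordinates are not independent of the conditioning. I would handle this by extending the conditional-integral representation of the two-point function to include the blocking indicators $\one\{X_{i,j}\le y_i^{(1)}\}\one\{X_{j,i}\le y_j^{(2)}\}$ and, exactly as in the reduction to indices in $V$ for the two-point estimate, checking that the interaction terms supported on $V_1\times V_2$ are negligible because $|V_1|+|V_2|\le 2n^\alpha\ll n^{1/2}$; the integral then factorizes over $V_1$ and $V_2$ up to a $1+o(1)$ factor and each blocking increment contributes the anticipated $n^{-1/2+o(1)}$.
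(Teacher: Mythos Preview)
Your proposal is correct and matches the paper's approach: the paper isolates the three-point estimate as Lemma~\ref{lem:not-union} (proved via exactly your mixed-blocking-pair observation, though bounding the blocking probability by the cruder $y_a z_b\le n^{-1+o(1)}$ from Lemma~\ref{lem:no-big-y} rather than your sharper increment analysis) and then sums directly over cycle decompositions of $\Pi_{1,2}$ rather than via a moment bound. One minor point: since $\cD_3$ already restricts to disjoint pairs, your appeal to Lemma~\ref{lem:overlapping-cycles-rare} is unnecessary---Lemma~\ref{lem:disjoint-factor} plus the first-moment bound already give $\sum_{(\Pi_1,\Pi_2)\text{ disjoint}}\P(\Pi_1,\Pi_2\mid\Pi)\le n^{2\alpha+o(1)}$.
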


Lemmas \ref{lem:overlapping-cycles-rare}, \ref{lem:few-cycles-rare}, and \ref{lem:combine-cycles-rare}---proven in Sections \ref{subsec:overlap-rare}, \ref{subsec:few-rare}, and \ref{subsec:combine-rare}, respectively---arise from a structural understanding of the stable matchings and computing conditional moments of $X_{\leq n^\alpha}^\circ$. For now, we are ready to prove Theorem \ref{th:no-stable-matching}.

\begin{proof}[Proof of Theorem \ref{th:no-stable-matching}]
    Let $\cD = \cD_1 \cup \cD_2 \cup \cD_3$ and note that by Lemmas \ref{lem:overlapping-cycles-rare}, \ref{lem:few-cycles-rare}, and \ref{lem:combine-cycles-rare} we have that $\P(\cD \,|\, \Pi) \leq n^{-1 + 4\alpha + o(1)}  + n^{s \gamma - \alpha(1 - e^{-s}) + o(1)} + n^{-1/3 + s \gamma + \alpha(1 - e^{-s}) + o(1)}$
    for all $\alpha < 1/4$ and $\gamma,s > 0$ to be chosen later.  On $\cD_2^c$ we have that $X_{\leq n^\alpha}^\circ \geq \gamma \log n$ and so on $\cD_1^c$ we have $\gamma \log n$ stable matchings $(\Pi^{(j)})_{j \in [\gamma \log n]}$ so that each of $\Pi^{(j)}\triangle \Pi$ is a single cycle of length at most $n^{\alpha}$ and $(\Pi^{(j)}\triangle \Pi)_{j}$ are mutually  disjoint.  On $\cD_3^c$, we may combine any subset of these to construct a stable matching, implying that on $\cD^c$ we have $X \geq 2^{\gamma \log n} = n^{\gamma \log 2}$.  This shows that \begin{align*}
        \E\left[X \cdot \one\{X < n^{\gamma \log 2}\}\right] & =(n-1)!! \P\left(X < n^{\gamma \log 2}, \Pi \right) = (n-1)!! \P(\Pi)\P\left(X < n^{\gamma \log 2}\,|\, \Pi \right) \\
        &\leq (e^{1/2} + o(1))\P(\cD \,|\, \Pi) \\
        &\leq n^{-1 + 4\alpha + o(1)}  + n^{s \gamma - \alpha(1 - e^{-s}) + o(1)} + n^{-1/3 + s \gamma + \alpha(1 - e^{-s}) + o(1)} \,.
    \end{align*}
    We then use Markov's inequality to bound \begin{align*}
        \P(X \geq 1) &\leq \E\left[X \cdot \one \{X < n^{\gamma \log 2}\} \right] + \P(X \geq n^{\gamma \log 2}) \\
        &\leq n^{-\gamma \log 2 + o(1)} + n^{-1 + 4\alpha + o(1)}  + n^{s \gamma - \alpha(1 - e^{-s}) + o(1)} + n^{-1/3 + s \gamma + \alpha(1 - e^{-s}) + o(1)}  \\
        &\leq n^{-t_\ast  +o(1)}
    \end{align*}
    where we used Lemma \ref{lem:Pi-stable-computation} in the second line and set \begin{equation}
        t_\ast = \max_{\alpha \in [0,1/4],\gamma \geq 0, s \geq 0} \min\left\{\gamma \log 2, 1 - 4\alpha, \alpha(1 - e^{-s}) - s\gamma, \frac{1}{3} - \alpha(1 - e^{-s}) - s\gamma\right\}\,.
    \end{equation}
    We attempt to optimize first by ignoring the constraint $\frac{1}{3}-\alpha(1-e^{-s})-s\gamma$; at the end we will check that this last constraint is larger than the result from optimizing the first three. Since the first and third are increasing and decreasing in $\gamma$ respectively, the optimal choice of $\gamma$ is when the two are equal.  Similarly, the second and third are decreasing and increasing in $\alpha$ respectively, and so the optimal choice of $\alpha$ is when the two are equal.  We set these three expressions to be equal giving the conditions 
    \[ \gamma = \frac{1-e^{-s}}{\log 2 + s}\alpha \quad \text{ and } \quad \alpha = \frac{1}{4+\frac{1-e^{-s}}{\log 2 + s}\log 2}. \]
    Thus, we want to maximize the function $\frac{1-e^{-s}}{\log 2+s}$ over $s \geq 0$. Setting the first derivative equal to 0 gives
    \begin{multline*}
        (\log 2 + s)e^{-s} - (1-e^{-s}) = 0 \iff (s+\log 2+1)e^{-s}=1 \\ \iff (s+\log2+1)e^{-s-\log 2-1}=\frac{1}{2e} \iff s+\log 2 + 1 = -W(-1/2e).
    \end{multline*}
    This suggests setting $s = -W(-1/2e)-1-\log 2 \approx 0.9852$ where $W$ is a branch of the Lambert $W$ function. This choice gives $\alpha \approx 0.2348 < 1/4$ and $\gamma \approx 0.087668 \geq 0$ so $t_\ast \geq 0.060766$. 
    Thus we deduce the bound
    \[ \P(X \geq 1) \leq n^{-0.060766+o(1)} \leq n^{-1/17}. \qedhere \]
\end{proof}

\section{Proofs of main lemmas}\label{sec:setup}
In this section we prove Lemmas \ref{lem:overlapping-cycles-rare}, \ref{lem:few-cycles-rare}, and \ref{lem:combine-cycles-rare}. Many of the proofs involve an understanding of the probability that various collections of matchings are simultaneously stable under a conditional distribution. We state these ingredients in this section and show how they imply our main lemmas, but postpone the true technical work to Section \ref{sec:lemmas}. 

\subsection{Setting things up and a quasirandomness event on preferences}

Recall that we  sample the preferences by generating an array $(X_{i,j})_{i \neq j}$ of i.i.d.\ uniform random variables in $[0,1]$ and we interpret $X_{i,a} < X_{i,b}$ to mean that $i$ prefers $a$ to $b$.  Throughout our computations, we will intersect with a quasirandomness event on the preferences of the conditioned matching.

Letting $x_i = X_{i,\Pi(i)}$, define 
\begin{align}
    \mathcal{G}_\Pi &= \left\{\left|\sum_{i} x_i - \sqrt{n}\right| \leq 10 \log n\right\} \cap \left\{\forall~i : x_i \leq \frac{\log^2 n}{\sqrt{n}}\right\} \nonumber \\
    &\qquad \qquad \cap \left\{ \left|\sum_{(i,j) \in \Pi} x_i x_j - \frac{1}{2} \right| \leq n^{-1/3} \right\} \cap \left\{\left|\sum_{i} x_i^2 - 2\right| \leq n^{-1/3} \right\}\,. \label{eq:G-Pi}
\end{align}

An adaptation of Pittel's argument\footnote{We follow Pittel's proof quite closely and reproduce it in part due to there being a small and ultimately inconsequential typo in Pittel's proof of this fact.  Namely, his version of Lemma \ref{lem:integrand-UB} is \cite[Lemma 4.1]{P:19} and a factor of $1/2$ is dropped when combining his (4.1) and (4.4) into (4.5).  As such, some of our intermediate bounds like Lemma \ref{lem:integrand-UB} differ in small numerical ways.  } will show the following estimate:  
\begin{lemma}\label{lem:prob-G-UB}
    We have $\P(\cG_\Pi^c \,|\,\Pi) \leq e^{-\Omega(\log^2 n)}$.
\end{lemma}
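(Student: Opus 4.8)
The plan is to follow Pittel's approach \cite{P:19}, with small adjustments for the precise form of $\cG_\Pi$ and the correction of the dropped factor of $1/2$ flagged in the footnote. Writing $x_i = X_{i,\Pi(i)}$, Lemma \ref{lem:single:exact} gives $\P\big(\Pi \cap \{\bx \in \cR\}\big) = \int_{\cR} \prod_{ij \notin \Pi}(1-x_ix_j)\,d\bx$ for every measurable $\cR \subseteq [0,1]^n$, so
\[
\P(\cG_\Pi^c \mid \Pi) \;=\; \frac{\int_{\cG_\Pi^c}\prod_{ij \notin \Pi}(1-x_ix_j)\,d\bx}{\int_{[0,1]^n}\prod_{ij \notin \Pi}(1-x_ix_j)\,d\bx}\,.
\]
It is thus enough to (i) lower bound the denominator by $c_0\,((n-1)!!)^{-1}$ for a constant $c_0>0$ and (ii) bound the numerator's contribution from each of the four events comprising $\cG_\Pi^c$ by $e^{-\Omega(\log^2 n)}\,((n-1)!!)^{-1}$; a union bound then finishes the proof. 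The workhorses are the pointwise inequalities $1-u\le e^{-u}$ and $1-u\ge e^{-u-u^2}$ (for $u\le\tfrac12$), the identity
\[
\sum_{ij\notin\Pi}x_ix_j \;=\; \tfrac12\Big(\sum_i x_i\Big)^2 - \tfrac12\sum_i x_i^2 - \sum_{(i,j)\in\Pi}x_ix_j,
\]
and the bounds $\tfrac12\sum_i x_i^2 + \sum_{(i,j)\in\Pi}x_ix_j \le \sum_i x_i^2 \le \sum_i x_i$ valid on $[0,1]^n$.

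For (i), the substitution $x_i = t_i/\sqrt n$ turns the integral into $n^{-n/2}\int_{[0,\sqrt n]^n}\prod_{ij\notin\Pi}(1-t_it_j/n)\,d\mathbf{t}$; restricting to $\{\max_i t_i\le \sqrt n/2\}\cap\{\sum_i t_i^2\le 3n\}$ and applying $1-u\ge e^{-u-u^2}$ gives the pointwise lower bound $e^{-O(1)}\exp(-\tfrac1{2n}(\sum_i t_i)^2)$, while an elementary Gamma-function computation (essentially Lemma \ref{lem:integral-compute}) gives $\int_{[0,\infty)^n}\exp(-\tfrac1{2n}(\sum_i t_i)^2)\,d\mathbf{t} = (1+o(1))\,e^{n/2}/\sqrt2$, with the restriction removing only an $o(1)$-fraction of this mass (as one checks by disintegrating along $\sum_i t_i$, which concentrates near $n$, and using that a uniform point on the simplex $\{\sum_i t_i\approx n\}$ has coordinates behaving like i.i.d.\ $\mathrm{Exp}(1)$). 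Hence $\P(\Pi)\ge c_0\,n^{-n/2}e^{n/2} = \Omega(((n-1)!!)^{-1})$; only the exponential order is needed, not the sharp constant of Lemma \ref{lem:Pi-stable-computation}.

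Part (ii) is organized as a chain, since the crude bound $\prod_{ij\notin\Pi}(1-x_ix_j)\le e^{-\frac12 S^2 + S}$ with $S := \sum_i x_i$ can exceed $\P(\Pi)$ by a factor $e^{\Theta(\sqrt n)}$ and must be refined using control established earlier. \emph{(a) Crude control of $S$:} from $\prod(1-x_ix_j)\le e^{-\frac12 S^2+S}$ we reduce $\int_{S\notin[\sqrt n/10,\,10\sqrt n]}\prod(1-x_ix_j)\,d\bx$ to a one-dimensional integral $\int e^{-\frac12 s^2+s}\frac{s^{n-1}}{(n-1)!}\,ds$ whose integrand is unimodal with peak at $\sqrt n(1+o(1))$ and exponentially smaller at the endpoints, so $\P(S\notin[\sqrt n/10,\,10\sqrt n]\mid\Pi)\le e^{-\Omega(n)}$. \emph{(b) Control of $\max_i x_i$:} on $\{S\le 10\sqrt n\}$ one has $\sum_j x_j^2\le S$ for free, so integrating out $x_i$ first with $T_i:=\sum_{j\ne i,\Pi(i)}x_j = S - O(1)$, the bound $\int_L^1\prod_{j:\,ij\notin\Pi}(1-x_ix_j)\,dx_i\le e^{-LT_i}/T_i$ compared against $\int_0^1\prod_{j:\,ij\notin\Pi}(1-x_ix_j)\,dx_i\gtrsim (T_i+\sum_j x_j^2)^{-1}$ shows that beyond $L:=\log^2 n/\sqrt n$ the conditional density of $x_i$ is down by $e^{-\Omega(L\sqrt n)}=e^{-\Omega(\log^2 n)}$ (with the region $T_i\ll\sqrt n$ absorbed into (a)); a union bound over $i$ controls $\max_i x_i$. \emph{(c) Sharp control of $S$:} on $\{\max_i x_i\le L\}\cap\{S\le 10\sqrt n\}$ we improve to $\prod(1-x_ix_j)\le e^{-\frac12 S^2 + S\max_i x_i}\le e^{-\frac12 S^2 + 10\log^2 n}$, and the one-dimensional reduction now produces a Gaussian tail about $\sqrt n(1+o(1))$ of order $e^{-(1+o(1))(10\log n)^2}$, beating the prefactor $e^{10\log^2 n}$, so $\P(|S-\sqrt n|>10\log n\mid\Pi)\le e^{-\Omega(\log^2 n)}$. \emph{(d) Sharp control of $\sum_i x_i^2$ and $\sum_{(i,j)\in\Pi}x_ix_j$:} on the good region from (a)--(c) one has $\prod_{ij\notin\Pi}(1-x_ix_j) = e^{-\frac12 S^2}\,e^{O(\log^4 n)}$, and the law of $\bx$ given $S$ under the reference measure $e^{-\frac12 S^2}\one_{[0,1]^n}\,d\bx$ is exactly uniform on the cube slice $\{\sum_i x_i = S\}\cap[0,1]^n$; its coordinates are negatively associated with $\E[\sum_i x_i^2] = 2+o(n^{-1/3})$ and $\E[\sum_{(i,j)\in\Pi}x_ix_j] = \tfrac12+o(n^{-1/3})$, each summand is $O(\log^4 n/n)$ and the total variance is $O(1/n)$, so a Bernstein-type bound gives deviation probability $e^{-\Omega(n^{1/3})}$, which swamps the $e^{O(\log^4 n)}$ loss. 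Summing, $\P(\cG_\Pi^c\mid\Pi)\le e^{-\Omega(\log^2 n)}$, the bottleneck being (b) and (c).

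The main obstacle I anticipate is exactly this entanglement: no one of the four statistics can be pinned to the stated precision before cruder bounds on the others are in hand, so the estimates must be chained (crude $S$ $\Rightarrow$ $\max_i x_i$ $\Rightarrow$ sharp $S$ $\Rightarrow$ sharp $\sum_i x_i^2$ and $\sum_{(i,j)\in\Pi}x_ix_j$), and at each link one must check that the polylogarithmic loss from replacing $\prod_{ij\notin\Pi}(1-x_ix_j)$ by an exponential is genuinely dominated by the corresponding Gaussian or Bernstein tail — the delicate bookkeeping where, as the footnote notes, Pittel's dropped factor of $1/2$ shifts the intermediate constants.
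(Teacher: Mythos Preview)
Your proposal is correct, but the paper's proof avoids the four–step chain entirely by first proving the pointwise bound of Lemma~\ref{lem:integrand-UB}: using the \emph{second-order} inequality $1-u\le e^{-u-u^2/2}$ rather than just $1-u\le e^{-u}$, the extra $-\tfrac14(\sum_i x_i^2)^2$ term absorbs the $+\tfrac32\sum_i x_i^2$ slack into a constant, yielding $\prod_{ij\notin\Pi}(1-x_ix_j)\le e^{81/16}e^{-S^2/2}$ uniformly on $[0,1]^n$. With this in hand the sharp control $\P(|S-\sqrt n|>10\log n,\Pi)\le e^{-\Omega(\log^2 n)}/((n-1)!!)$ follows directly from the one–dimensional Laplace estimate, with no need for your steps (a)--(c). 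For the remaining three events the paper then disintegrates along $S$ once and for all, writes $x_i=s\,Y_i/\sum_j Y_j$ with $Y_j$ i.i.d.\ exponentials, and invokes Bernstein together with a stretched-exponential deviation bound (Facts~\ref{fact:bernstein} and~\ref{fact:stretched-exponential}) for the Weibull-tailed sums $\sum_i Y_i^2$ and $\sum_{(i,j)\in\Pi}Y_iY_j$. Your route uses only the first-order upper bound, which produces an $e^{O(\sqrt n)}$ slack and so forces the bootstrap (crude $S\Rightarrow\max_i x_i\Rightarrow$ sharp $S\Rightarrow$ second moments); in exchange you never need the second-order trick of Lemma~\ref{lem:integrand-UB}, and by conditioning on $\{\max_i x_i\le L\}$ you reduce step (d) to a Bernstein bound for bounded summands rather than invoking stretched-exponential concentration. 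Both arguments land at $e^{-\Omega(\log^2 n)}$, with the bottleneck in each case being the $\max_i x_i$ event; the paper's version is shorter, while yours trades one nontrivial pointwise inequality for more elementary but lengthier bookkeeping.
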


We prove Lemma \ref{lem:prob-G-UB} in Appendix \ref{app:proof-G}. To begin with, we find the asymptotics of the probability that $\Pi_1$ is stable given $\Pi$ is stable (when intersected with $\cG_\Pi$).  The corresponding upper bound in the below is implicitly shown to hold by Pittel \cite{P:93,P:19}; our proof---given in Section \ref{subsec:first-moment}---works directly with the conditional distribution and so is conceptually different and provides upper and lower bounds simultaneously.

\begin{lemma}\label{lem:first-moment}
    Let $\Pi_1$ be so that it differs from $\Pi$ on $\mu$ cycles and $|\Pi \triangle \Pi_1| \leq n^{1/4}$.  Then we have  \begin{equation*}
        \P(\Pi_1,\cG_\Pi\,|\,\Pi) = (1 \pm \widetilde{O}(n^{-1/3})) 2^\mu n^{-|\Pi \setminus  \Pi_1|}.
    \end{equation*}
\end{lemma}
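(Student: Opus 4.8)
The plan is to evaluate $\P(\Pi_1,\cG_\Pi\mid\Pi)$ as a ratio of the explicit integral representations for one and two stable matchings, following the heuristic sketched in the introduction. Write $V$ for the vertex set of $\Pi\triangle\Pi_1$, set $x_i=X_{i,\Pi(i)}$ for all $i$ and $y_i=X_{i,\Pi_1(i)}$ for $i\in V$ (and $y_i=x_i$ for $i\notin V$). By the exact single-matching formula (Lemma \ref{lem:single:exact}) and the exact two-matching formula (Lemma \ref{lem:two-point}),
\[
\P(\Pi_1,\cG_\Pi\mid\Pi)=\frac{1}{\P(\Pi)}\int_{\cG_\Pi}\Big(\prod_{ij\notin\Pi}(1-x_ix_j)\Big)\,R(\bx,y_V)\,d\bx\,dy_V,
\]
where $R$ is the two-matching integrand (which, besides the product over $ij\notin\Pi\cup\Pi_1$, also contains a deterministic stability constraint at the endpoints of each edge of $\Pi\triangle\Pi_1$) divided by $\prod_{ij\notin\Pi}(1-x_ix_j)$. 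The first step is to simplify $R$: every factor indexed by a pair entirely inside $V^c$ cancels, so $R$ involves only coordinates meeting $V$; moreover, since $|V|=|\Pi\triangle\Pi_1|\le n^{1/4}$, the product of all factors indexed by pairs inside $V$ (together with the spurious $(1-x_ix_j)^{-1}$ coming from the edges of $\Pi_1\setminus\Pi$) is $1+\widetilde O(n^{-1/2})$ on $\cG_\Pi$ over the range of $y$ that contributes, so it may be absorbed into the error term. What is left is a product over pairs $(i,j)$ with $i\in V$, $j\in V^c$, plus one sign condition per edge of $\Pi\triangle\Pi_1$.

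Next I would split the $y_V$-integral according to the sign pattern of $(y_i-x_i)_{i\in V}$. Writing $A=\{i\in V:y_i<x_i\}$ and $B=\{i\in V:y_i>x_i\}$, the surviving cross factors of $R$ collapse to $1$ for each $i\in A$ and to $\prod_{j\in V^c}\frac{1-y_ix_j}{1-x_ix_j}$ for each $i\in B$. The combinatorial heart of the argument is that the stability constraints on the edges of $\Pi\triangle\Pi_1$ require each $\Pi$-edge to have an endpoint in $A$ and each $\Pi_1$-edge to have an endpoint in $B$; since $\Pi$- and $\Pi_1$-edges alternate around each cycle of $\Pi\triangle\Pi_1$, propagating this constraint shows that only the two "alternating" colourings of each cycle are admissible, so exactly $2^\mu$ sign patterns survive and on each of them $|A|=|B|=\tfrac12|V|=|\Pi\setminus\Pi_1|=:m$. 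On an admissible pattern I would integrate $y_V$ out factor by factor: $\int_0^{x_i}dy_i=x_i$ for $i\in A$, and $\int_{x_i}^1\prod_{j\in V^c}\frac{1-y_ix_j}{1-x_ix_j}\,dy_i$ for $i\in B$, which on $\cG_\Pi$ equals $\tfrac1{\sqrt n}(1+o(1))$ after linearizing the logarithm (using $x_j\le\log^2n/\sqrt n$) and using $\sum_{j\in V^c}x_j=\sqrt n+O(\log n)$. Summing over the $2^\mu$ patterns reduces the claim, modulo the errors collected so far, to showing
\[
\E\Big[\prod_{i\in A}(\sqrt n\,x_i)\,\one_{\cG_\Pi}\,\Big|\,\Pi\Big]=1\pm\widetilde O(n^{-1/3})\qquad\text{for every }A\subseteq[n]\text{ with }|A|\le n^{1/4}/2,
\]
where by the automorphism invariance of $\Pi$ and of $\cG_\Pi$ this moment depends on $A$ only through $|A|$.

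I expect the main obstacle to be exactly this conditional moment estimate, and more broadly the need to control all the $1+o(1)$ factors above at the rate $\widetilde O(n^{-1/3})$ uniformly over all $\Pi_1$ with $|\Pi\triangle\Pi_1|\le n^{1/4}$ (hence over $|A|$ as large as $\sim n^{1/4}$). The crude bounds defining $\cG_\Pi$ do not suffice by themselves: using only $|\sum_{j\in V^c}x_j-\sqrt n|\le10\log n$ would already cost a relative error $\widetilde O(|V|/\sqrt n)=\widetilde O(n^{-1/4})$ per surviving pattern. What is needed instead is the finer Laplace-type analysis behind Lemma \ref{lem:Pi-stable-computation}, carried out in Lemma \ref{lem:integral-compute}: writing $1-x_ix_j=\exp(-x_ix_j+O(x_i^2x_j^2))$, the weight $\prod_{ij\notin\Pi}(1-x_ix_j)$ concentrates $\bx$ near $\sum_i x_i=\sqrt n$ (with fluctuations of order $1$ and conditional mean $\sqrt n+O(n^{-1/2})$) and near $\sum_i x_i^2=2$, so that the conditional mean and second moment of $\sum_{j\in V^c}x_j-\sqrt n$, as well as the moment $\E[\prod_{i\in A}(\sqrt n\,x_i)\mid\Pi]$, can be computed with error $\widetilde O(n^{-1/2})$; combining this with the intrinsic $O(n^{-1/3})$ error of the single-matching estimate yields the lemma. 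Equivalently, the same estimate can be obtained by dividing Pittel's two-point asymptotic for $\P(\Pi\wedge\Pi_1)$ by $\P(\Pi)$ and invoking Stirling's formula, as noted in the introduction; the advantage of the conditional route is that the pieces above generalize cleanly to several simultaneously stable matchings.
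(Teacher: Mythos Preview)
Your proposal is correct and follows essentially the same route as the paper: split by orientation (the $2^\mu$ admissible sign patterns $(A,B)$), integrate out $\by$ to reduce to the conditional moment $\E\big[\prod_{i\in A}x_i\,\one_{\cG_\Pi}\mid\Pi\big]$, and evaluate that moment via Lemma~\ref{lem:integral-compute}. The paper packages the $\by$-integration as a standalone pointwise estimate $\P(\vec\Pi_1\mid\Pi,\bx)=(1\pm\widetilde O(n^{-1+2\alpha}))\prod_{i\in A}x_i/\sqrt n$ for $\bx\in\cG_\Pi$ (Lemma~\ref{lem:two-point-conditional}, preceded by Lemma~\ref{lem:no-big-y} to justify truncating to small $y_i$), and then applies Lemma~\ref{lem:integral-compute} exactly as you propose; your concern about accumulating $|B|$ many $O(\log n/\sqrt n)$ errors when replacing $\sum_{j\in V^c}x_j$ by $\sqrt n$ is a fair bookkeeping point, but it does not affect the downstream use of the lemma.
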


We will combine this with a count of the number of potential matchings $\Pi_1$ that differ from $\Pi$ on a single cycle of fixed size $2\nu$.  
\begin{fact}\label{fact:cycle-count}
     Let $\Pi$ be a matching on $n$ vertices.  Then for $\nu \leq n^{1/4}$ we have \begin{equation}\label{eq:cycle-count-basic}
         |\cM_\nu^\circ| = \binom{n/2}{\nu}\cdot \left(\frac{(\nu-1)!}{2} \right) \cdot 2^{\nu} = (1 \pm O(n^{-1/2})) \frac{n^{\nu}}{2\nu}\,.
     \end{equation}
\end{fact}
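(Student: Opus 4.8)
The plan is to prove the exact combinatorial identity $|\cM_\nu^\circ| = \binom{n/2}{\nu}\cdot\frac{(\nu-1)!}{2}\cdot 2^\nu$ by a direct counting argument, and then extract the stated asymptotic by expanding the resulting product. A matching $\Pi_1 \in \cM_\nu^\circ$ is the same data as a single alternating cycle $C = \Pi\triangle\Pi_1$ of length $2\nu$: the $\nu$ edges of $C$ lying in $\Pi$ pin down which $2\nu$ vertices are touched, and the other $\nu$ edges (those of $\Pi_1$) record how $\Pi_1$ rewires them. Since $\Pi$ and $\Pi_1$ are recovered from one another and $C$, this correspondence is a genuine bijection, so counting the cycles $C$ counts the matchings exactly.

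To count: first choose the set of $\nu$ edges of $\Pi$ lying on $C$, which can be done in $\binom{n/2}{\nu}$ ways since $\Pi$ has $n/2$ edges. Given these edges $f_1,\dots,f_\nu$, contract each $f_j$ to a node carrying two distinguishable ports (its endpoints); then the restriction of $\Pi_1$ to the $2\nu$ vertices is precisely a set of $\nu$ port-to-port edges turning the contracted multigraph into a single cycle on the $\nu$ nodes, where no $\Pi_1$-edge may join the two ports of a single $f_j$ (else $\Pi_1$ would repeat an edge of $\Pi$). Such a structure is specified by a cyclic ordering of $f_1,\dots,f_\nu$, of which there are $(\nu-1)!$, together with an orientation of each $f_j$ (which endpoint is entered first when traversing $C$), giving $2^\nu$ choices; reversing the direction of traversal sends (cyclic order, orientations) to (reversed cyclic order, all orientations flipped), which is a fixed-point-free involution, so we have overcounted by exactly $2$. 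This yields $\frac{(\nu-1)!}{2}\cdot 2^\nu$ matchings per choice of the $\nu$ edges (the product being a genuine integer even for $\nu=2$, which one checks by hand gives $2$), and multiplying by $\binom{n/2}{\nu}$ gives the identity.

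For the asymptotic, I would rewrite
\[
\binom{n/2}{\nu}\frac{(\nu-1)!}{2}2^\nu \;=\; \frac{1}{2\nu}\prod_{i=0}^{\nu-1}(n-2i) \;=\; \frac{n^\nu}{2\nu}\prod_{i=0}^{\nu-1}\Big(1-\frac{2i}{n}\Big).
\]
Since $\nu\le n^{1/4}$, each factor obeys $0\le 2i/n\le 2n^{-3/4}$, so $\log\prod_{i=0}^{\nu-1}(1-2i/n) = -\sum_{i=0}^{\nu-1}\big(\tfrac{2i}{n}+O(\tfrac{i^2}{n^2})\big) = -\tfrac{\nu(\nu-1)}{n}+O(\tfrac{\nu^3}{n^2}) = O(n^{-1/2})$, hence the product equals $1\pm O(n^{-1/2})$ and $|\cM_\nu^\circ| = (1\pm O(n^{-1/2}))\tfrac{n^\nu}{2\nu}$. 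There is no serious obstacle here; the only point demanding care is the factor $\frac{(\nu-1)!}{2}\cdot 2^\nu$ for alternating $2\nu$-cycles through a prescribed set of $\nu$ $\Pi$-edges — specifically, verifying that the reversal-of-traversal symmetry is fixed-point-free so that one divides by exactly $2$, and separately checking the small case $\nu=2$ directly since there $(\nu-1)!/2$ is not itself an integer.
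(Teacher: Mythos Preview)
Your proof is correct and follows essentially the same approach as the paper: choose the $\nu$ edges of $\Pi$ on the cycle, contract them to nodes, count cyclic arrangements times $2^\nu$ orientations, and divide by $2$ for traversal reversal. Your write-up is simply more explicit about why the division by $2$ is valid and about the asymptotic step, but the argument is the same.
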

\begin{proof}
    We must first choose the $\nu$ pairs $e_1,\ldots,e_\nu$ from $\Pi$ that are involved.   
    In order to choose the cycle determining $\Pi_1$, we view the cycle as contracting each of $e_j$ to some node $v_j$.  Choosing $\Pi_1$ amounts to choosing a cyclic arrangement of the contracted nodes $v_j$ along with multiplying by a factor of $2^{\nu}$ since there are two possible orientations for each edge $e_j$ that contract to a given $v_j$.  In particular, this shows that \begin{equation*} 
        |\cM_\nu^\circ| = \binom{n/2}{\nu}\cdot \left(\frac{(\nu-1)!}{2} \right) \cdot 2^{\nu} = (1 \pm O(\nu^2/n)) \frac{n^{\nu}}{2\nu}\,. \qedhere
    \end{equation*}
\end{proof}

\subsection{Bounding \texorpdfstring{$\cD_1$}{D1}}\label{subsec:overlap-rare}

Aiming to prove $\cD_1$ is rare, we bound the probability two matchings that differ from $\Pi$ on a cycle are simultaneously stable. This bound---proven in Section \ref{subsec:quasirandom}---is particularly effective at handling matchings with intersecting cycles. We will give precise estimates for the complementary case of disjoint cycles later on.

\begin{lemma}\label{lem:no-overlaps-undirected}
    Let $\Pi_1$ and $\Pi_2$ so that $\Pi_i \triangle \Pi$ is a cycle of length $\leq n^{\alpha}$ for $i \in \{1,2\}$ and $\alpha < 1/4$.  Set $\kappa =|\{j:\Pi(j) \neq \Pi_1(j) \neq \Pi_2(j)\}|$.  Then $$\P({\Pi}_1,{\Pi}_2 \,|\,\Pi) \leq (4 + \widetilde{O}(n^{-1/3})) 2^\kappa n^{-|\Pi_1 \cup \Pi_2 \setminus \Pi| }\,.$$
\end{lemma}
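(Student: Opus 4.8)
The plan is to reduce the undirected statement to a directed one, controlled via the conditional-integral machinery already set up for the one-matching case (Lemma \ref{lem:first-moment}), and then to handle the overlapping-cycle case by a careful bookkeeping of which integration variables survive. Recall that the core quantity is the conditional probability $\P(\Pi_1, \Pi_2, \cG_\Pi \mid \Pi)$, which after dividing by the stability density $\prod_{ij \notin \Pi}(1 - x_i x_j)$ becomes an integral over the variables $(y_i^{(1)})_{i \in V_1}$ and $(y_i^{(2)})_{i \in V_2}$, where $V_\ell$ is the vertex set of $\Pi_\ell \triangle \Pi$. On the vertices where the two cycles overlap, both the $y^{(1)}$ and $y^{(2)}$ constraints are active simultaneously; elsewhere only one is. First I would set $W = V_1 \cap V_2$ (so $|W|$ is roughly $2\kappa$ up to endpoints of the overlap paths, since the intersection of two cycles is a union of paths) and split the integrand into the part supported on $W$ and the part on $(V_1 \cup V_2) \setminus W$. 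On the non-overlap part, the analysis is exactly as in the single-matching estimate: each vertex in $A_\ell$ contributes an indicator $\one\{y_i^{(\ell)} < x_i\}$ and each vertex in $B_\ell$ contributes $(1 - y_i^{(\ell)} x_j)/(1-x_ix_j)$-type factors, yielding $n^{-1/2}$ per vertex and a $2$ per cycle from the $A/B$ dichotomy.

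The heart of the argument is the overlap set $W$. For $i \in W$ we must integrate out \emph{both} $y_i^{(1)}$ and $y_i^{(2)}$, each constrained relative to $x_i$ and interacting with the $V^c$ variables through factors like $(1 - y_i^{(\ell)} x_j)$. Heuristically, on $\cG_\Pi$ we have $\sum_j x_j \approx \sqrt n$, so each of $y_i^{(1)}, y_i^{(2)}$ lies within an $\mathrm{Exp}(\sqrt n)$-type window of $x_i$; integrating out one of them costs $n^{-1/2}$ and the second — now constrained to a window of comparable width — costs at most an absolute constant times $n^{-1/2}$ more, but crucially \emph{not} a further genuine $n^{-1/2}$ saving relative to treating that vertex as belonging to only one cycle. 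Counting: $|V_1 \cup V_2| = |V_1| + |V_2| - |W|$, so naively one would get $n^{-(|V_1|+|V_2|)/2} = n^{-(|\Pi_1 \setminus \Pi| + |\Pi_2 \setminus \Pi|)}$, but since $|\Pi_1 \cup \Pi_2 \setminus \Pi| = |\Pi_1 \setminus \Pi| + |\Pi_2 \setminus \Pi| - |(\Pi_1 \triangle \Pi)\cap(\Pi_2 \triangle \Pi)|/2$ (with the overlap being paths, hence $|W|$ vertices and roughly $|W| - (\text{number of overlap paths})$ edges), the bound $n^{-|\Pi_1 \cup \Pi_2 \setminus \Pi|}$ is actually \emph{weaker} than the naive $n^{-(|V_1| + |V_2|)/2}$ by a factor $n^{O(\text{number of overlap components})}$, and this slack is exactly what absorbs the constant-per-overlap-vertex losses from the doubled integrations and any boundary effects. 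So the strategy is to prove the \emph{clean} directed bound $n^{-(|V_1|+|V_2|)/2}$ times a constant per cycle and per overlap path, then observe it implies the stated inequality with room to spare.

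Concretely the steps are: (1) cite Lemma \ref{lem:prob-G-UB} to restrict to $\cG_\Pi$ at negligible cost; (2) write the conditional probability as the integral of $\prod_{\ell} \prod_{i \in A_\ell} \one\{y_i^{(\ell)} < x_i\} \prod_{i \in B_\ell, j \in V^c} \tfrac{1 - y_i^{(\ell)} x_j}{1 - x_i x_j}\one\{y_i^{(\ell)}>x_i\}$ against the conditional law of $\bx$, discarding the $V \times V$ cross-terms as lower order (justified exactly as for $|V| \le n^{1/2 - o(1)}$ in Lemma \ref{lem:first-moment}); (3) for $i \notin W$ integrate out $y_i^{(\ell)}$ as in the single-matching case to get $(1 \pm \widetilde O(n^{-1/3})) n^{-1/2}$ per vertex, picking up $2^{\mu_1 + \mu_2}$ from the $A/B$ choices; (4) for $i \in W$ integrate out $y_i^{(1)}$ and $y_i^{(2)}$ jointly, bounding the result by $C \cdot n^{-1}$ for an absolute constant $C$ using $\sum_j x_j \le 2\sqrt n$ on $\cG_\Pi$ — this is the one genuinely new computation; (5) collect exponents: the total power of $n^{-1/2}$ is $|V_1| + |V_2|$, and compare with $2|\Pi_1 \cup \Pi_2 \setminus \Pi| = |V_1 \cup V_2|$ to extract the claimed bound, with the surplus $n^{-|W|/2 + O(\#\text{paths})}$ and the surplus from step (4)'s constants combining to keep everything under $(4 + \widetilde O(n^{-1/3})) 2^\kappa$. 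The main obstacle is step (4): making the joint integration over the two overlap variables rigorous, in particular verifying that the factors $\prod_{j \in V^c}(1 - y_i^{(1)} x_j)(1 - y_i^{(2)} x_j)/(1-x_ix_j)^2$ do not conspire to produce a subconstant factor and that $2\kappa \ge |W| - O(1)$ per component so that the $2^\kappa$ on the right genuinely dominates the $2$-per-overlap-path slack I am relying on.
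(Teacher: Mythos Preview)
Your overall architecture---reduce to oriented matchings, drop the $V\times V$ cross-terms, and integrate the extra variables against the conditional law of $\bx$ on $\cG_\Pi$---is exactly the paper's route, and steps (1)--(3) match it. The gap is in step (4). You assume that for every $i\in W=V_1\cap V_2$ there are two genuinely distinct integration variables $y_i^{(1)}=X_{i,\Pi_1(i)}$ and $y_i^{(2)}=X_{i,\Pi_2(i)}$, each contributing a factor $n^{-1/2}$. But on the interior of any overlap path, i.e.\ at every vertex incident to a shared edge of $(\Pi_1\setminus\Pi)\cap(\Pi_2\setminus\Pi)$, one has $\Pi_1(i)=\Pi_2(i)$ and hence $y_i^{(1)}$ and $y_i^{(2)}$ are \emph{literally the same random variable}. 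At those vertices you recover only one factor of $n^{-1/2}$, not two, so the exponent $(|V_1|+|V_2|)/2=\nu_1+\nu_2$ in your step (5) is not attainable. The true exponent is the weaker $\nu_1+\nu_2-t=|\Pi_1\cup\Pi_2\setminus\Pi|$, and the ``slack of $n^{t}$'' you are counting on to absorb constants simply does not exist; your step (5) accounting (including the claimed identity $2|\Pi_1\cup\Pi_2\setminus\Pi|=|V_1\cup V_2|$) breaks down for the same reason.

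The paper repairs this by partitioning $W$ according to whether $\Pi_1(i)=\Pi_2(i)$. Only at the $\kappa$ vertices with $\Pi_1(i)\neq\Pi_2(i)$ are there two independent variables. On the $A$-side such a vertex yields $\int_0^{x_i}\int_0^{x_i}dy\,dz=x_i^2$, and Lemma~\ref{lem:integral-compute} converts each $x_i^2$ to $2n^{-1}$ (not $n^{-1}$), which is one of the factors of $2$. On the $B$-side both $y_i,z_i>x_i$ admit two orderings, each integrating to $n^{-1}$, giving the other factor of $2$. This produces $4^{|A_{1,2}|}=2^{\kappa}$ and the exponent $n^{-|\Pi_1\cup\Pi_2\setminus\Pi|}$ directly; the leading $4$ then comes from summing over the (at most) four orientation pairs $(\vec\Pi_1,\vec\Pi_2)$. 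So the $2^{\kappa}$ in the statement is not a bound you verify after the fact---it is exactly what the two-variable vertices contribute once you stop double-counting the one-variable ones.
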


To prove Lemma \ref{lem:overlapping-cycles-rare}, we will apply Lemma~\ref{lem:no-overlaps-undirected} along with a union bound so we require a combinatorial lemma counting configurations as well.
\begin{lemma}\label{lem:counting}
    Fix a matching $\Pi$ on $n$ vertices.  For parameters $\nu_1 \leq \nu_2 \leq n^{1/4}$ and $t < \nu_1$, let $F(n,\nu_1,\nu_2,t,s)$ be the number of pairs of cycles  $\Pi_1, \Pi_2$ so that $|{\Pi}_1 \triangle \Pi| = 2\nu_1,  |{\Pi}_2 \triangle \Pi| = 2\nu_2$ and $|({\Pi}_1 \setminus \Pi) \cap ({\Pi}_2 \setminus \Pi)| = t$ and has $s$ components.  Then $$F(n,\nu_1,\nu_2,t,s) \leq  (1 + O(n^{-1/2}))\frac{n^{\nu_1 + \nu_2 - t}}{\nu_1\nu_2} \cdot \left(\frac{t \nu_1 \nu_2}{n} \right)^s \,.$$
\end{lemma}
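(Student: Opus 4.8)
The plan is to enumerate the pairs counted by $F$ by a staged construction, keeping the count exact up to the unavoidable $(1\pm O(n^{-1/2}))$ coming from Stirling's formula; crude $C^{s}$-type bounds would be too lossy to yield the stated leading constant. I would build the ordered pair $(\Pi_1,\Pi_2)$ by first choosing the cycle $C_1:=\Pi_1\triangle\Pi$, then the overlap $O:=(\Pi_1\triangle\Pi)\cap(\Pi_2\triangle\Pi)$ as a subgraph of $C_1$, then the rest of the cycle $C_2:=\Pi_2\triangle\Pi$. Because $O$ is determined by the pair, every valid $(\Pi_1,\Pi_2)$ is produced exactly once, so $F$ is the sum over the discrete data of the product of the counts at the three stages, and for an upper bound we are free to over-count at the last stage (e.g.\ by allowing $C_2$ to meet $C_1$ in more than $O$).

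Stage~1 is Fact~\ref{fact:cycle-count}: there are $(1+O(n^{-1/2}))\frac{n^{\nu_1}}{2\nu_1}$ choices of $C_1$. Stage~2 chooses $O\subseteq C_1$, which is a disjoint union of arcs of the cycle $C_1$: $s$ of these arcs carry at least one of the $t$ shared new edges, and any remaining arcs are single shared $\Pi$-edges. The $s$ essential arcs are obtained by choosing a composition $t=t_1+\dots+t_s$, placing $s$ disjoint cyclic runs of consecutive new edges (of lengths $t_1,\dots,t_s$) among the $\nu_1$ new edges of $C_1$, and recording for each run whether it is extended by its bordering $\Pi$-edge on each side; the key structural point is that after contracting each arc to a node, the remainder of $C_1$ together with these $s$ nodes is still an alternating cycle of a definite size. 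Stage~3 chooses the rest of $C_2$: deleting $O$ from $C_2$ leaves $s$ gap-arcs (one between each consecutive pair of pieces of $O$ along $C_2$), each bordered by $\Pi$-edges and carrying some of the $\nu_2-t$ new edges of $C_2$ outside $O$. So specifying $C_2$ amounts to choosing the cyclic pattern in which the $s$ prescribed pieces of $O$ interleave with the $s$ gap-arcs, distributing the $\nu_2-t$ remaining new edges among the gaps, and, traversing each gap, choosing each fresh new edge together with the fresh $\Pi$-pair it reaches (disjoint from the $\Pi$-pairs of $C_1$). Each of the $\nu_2-t$ fresh new edges is essentially a choice of a fresh $\Pi$-pair and supplies the dominant factor $\sim n$, so this stage contributes $\sim n^{\nu_2-t}$ times cyclic/orientation factors of exactly the shape seen in Fact~\ref{fact:cycle-count}; the factor $(\nu_1\nu_2/n)^{s}$ relative to two independent cycles arises because each of the $s$ pieces of $O$ is prescribed rather than freshly built (contributing $O(1)$ instead of $\sim n$), while the splice positions along $C_1$ and along $C_2$ and the distribution of the $t$ shared new edges among the arcs together contribute at most $\sim t^{s}\nu_1^{s}\nu_2^{s}$ (the exact powers of $t$ and $\nu_2$ being somewhat smaller, but absorbed using $t,\nu_2\ge 1$).

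Assembling the three stages and Stirling-approximating every binomial $\binom{n/2}{j}$ (legitimate since all the sizes involved are $\le n^{1/4}$) should make the competing factorials, binomials and powers of $2$ cancel, leaving $(1+O(n^{-1/2}))\frac{n^{\nu_1+\nu_2-t}}{\nu_1\nu_2}\big(\tfrac{t\nu_1\nu_2}{n}\big)^{s}$; the single-$\Pi$-edge overlap components contribute only a convergent geometric correction $\sum_{r\ge0}(\nu_1\nu_2/n)^{r}=1+O(n^{-1/2})$ since $\nu_1\nu_2/n\le n^{-1/2}$. The step I expect to be the main obstacle is making Stage~3 genuinely exact: one must set up the contraction and splicing so that the alternating structure of $C_2$ remains visible through the prescribed overlap arcs, correctly track the (up to) four "types" of each arc according to whether it begins and ends with a new edge or a $\Pi$-edge without letting these introduce uncontrolled $C^{s}$ factors, and check that the surviving powers of $n$, $\nu_1$, $\nu_2$ and $t$ match the claim exactly. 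By contrast, Stages~1 and~2 and the final reduction of $F$ to a product of binomial coefficients in the spirit of the proof of Fact~\ref{fact:cycle-count} are comparatively routine.
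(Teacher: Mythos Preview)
Your three–stage plan (choose $C_1$, choose the overlap, build the rest of $C_2$) is exactly the skeleton of the paper's proof, and your proposal would eventually work. But you are making the bookkeeping much harder than necessary, and the difficulties you anticipate in Stage~3 (the ``four types'' of arcs, single shared $\Pi$-edges as separate overlap components, keeping the alternating structure visible through prescribed arcs) are all artifacts of working in the wrong model.

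The paper's key simplification is to commit \emph{from the outset} to the contracted model of Fact~\ref{fact:cycle-count}: contract every $\Pi$-edge to a single node. In that model, a matching $\Pi_j$ with $\Pi_j\triangle\Pi$ a $2\nu_j$-cycle is simply an (oriented) cycle on $\nu_j$ contracted nodes, and $(\Pi_1\setminus\Pi)\cap(\Pi_2\setminus\Pi)$ is just the set of common \emph{edges} of the two contracted cycles. There is no alternating structure to track, no arc types, and a ``single shared $\Pi$-edge'' is merely a common \emph{vertex} of the two contracted cycles, which the paper handles by overcounting (the $\nu_2-t-s$ fresh $\Pi$-edges for $\Pi_2$ are not required to avoid those of $\Pi_1$). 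With this setup, Stage~2 becomes: choose $t$ edges on a $\nu_1$-cycle in $s$ arcs, bounded by $\nu_1\binom{t-1}{s-1}\binom{\nu_1-t-1}{s-1}$. Stage~3 becomes: contract each of the $s$ arcs to a super-node $w_j$, pick $\nu_2-t-s$ fresh nodes $u_j$, and count unoriented cycles on the $\nu_2-t$ symbols, giving $\binom{n/2}{\nu_2-t-s}2^{\nu_2-t-s-1}(\nu_2-t-1)!$. No types, no geometric correction over extra $\Pi$-edge components, and the bound drops out in two lines.

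Two smaller points: first, you define $O=(\Pi_1\triangle\Pi)\cap(\Pi_2\triangle\Pi)$, but the parameters $t,s$ in the lemma refer to $(\Pi_1\setminus\Pi)\cap(\Pi_2\setminus\Pi)$; mixing these is what forces you to invent the extra ``single $\Pi$-edge'' components. Second, you are aiming for exactness up to $1+O(n^{-1/2})$, but the lemma is only an upper bound, so you are free to overcount liberally in Stage~3 rather than make it ``genuinely exact''.
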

\begin{proof}
    We recall the proof of Fact \ref{fact:cycle-count} to choose $\Pi_1$: we choose $\nu_1$ edges $e_1,\ldots,e_{\nu_1}$ from $\Pi$, contract them to nodes $v_1,\ldots,v_{\nu_1}$, and identify $\Pi_1$ with a cycle on $v_1,\ldots,v_{\nu_1}$ along with a choice of orientation of the nodes $v_1,\ldots,v_{\nu_1}$.  This gives that the number of choices for $\Pi_1$ is $(1/2 + O(n^{-1/2}))n^{\nu_1}/\nu_1$.
    
    In order to upper bound $F$, we continue to work in this contracted model.  Since $\Pi_2 \setminus \Pi$ shares $t$ edges with ${\Pi}_1 \setminus \Pi$, in the contracted model this means that $\Pi_2$ contains $t$ edges from the cycle on $v_1,\ldots,v_{\nu_1}$ in $s$ connected components.  We claim that the number of ways to choose $t$ edges from a cycle of length $\nu_1$ so that they form exactly $s$ connected components is at most\begin{equation*} \leq
       \nu_1 \binom{t-1}{s-1} \binom{\nu_1 - t - 1}{s-1}\,.
    \end{equation*}
    This is because we have to pick a first element of the cycle, then choose an $s$ composition of $t$ to choose the lengths of the connected components and then an $s$ composition of $\nu_1 -t$ to choose the sizes of the gaps between them.  To complete the count for $\Pi_2$, we may contract each of the $s$ components into nodes $w_1,\ldots,w_s$, choose the remaining $\nu_2 - t - s$ edges from $\Pi$, contract them into nodes $u_1,\ldots,u_{\nu_2 - t - s}$, and choose an unoriented cycle on $\{w_1,\ldots,w_s,u_1,\ldots,u_{\nu_2 - t -s}\}$ giving a count of at most:
     \begin{align*}
        \leq \binom{n/2}{\nu_2 - t - s} 2^{\nu_2 - t - s-1} (\nu_2 - t - 1)! = (1 + O(\nu_2^2 / n)) n^{\nu_2 - t - s} (\nu_2 - t)_{s-1}\,.
    \end{align*}
    In particular, this implies that the number of choices for ${\Pi}_2$ given ${\Pi}_1$ is \begin{align*}
        &\leq (1 + O(\nu_2^2/n))\cdot n^{\nu_2 -t} \cdot \nu_1 \binom{t - 1}{s - 1} \binom{\nu_1 - t - 1}{s - 1} n^{-s} (\nu_2 - t)_{s-1} \\
        &\leq  (1 + O(\nu_2^2/n))\cdot \frac{n^{\nu_2 -t}}{\nu_2} \left(\frac{t \nu_1 \nu_2}{n} \right)^s. \qedhere
    \end{align*}
\end{proof}

\begin{proof}[Proof of Lemma \ref{lem:overlapping-cycles-rare}]
    Notice that $|(\Pi_1 \cup \Pi_2) \setminus \Pi| = \nu_1 + \nu_2 - |(\Pi_1 \setminus \Pi) \cap (\Pi_2 \setminus \Pi)|$. Moreover, for each component of $(\Pi_1 \setminus \Pi) \cap (\Pi_2 \setminus \Pi)$ we have exactly two vertices such that $\Pi(j) \neq \Pi_1(j) \neq \Pi_2(j)$. By Lemmas \ref{lem:no-overlaps-undirected} and \ref{lem:counting}, we have that 
    \begin{align*}
        \P(\cD_1\,|\,\Pi) &\leq \sum_{1 \leq s \leq t \leq \nu_1 \leq \nu_2 \leq n^{\alpha}} (4+\widetilde O(n^{-1/3}))4^sn^{-\nu_1-\nu_2+t}\cdot \frac{n^{\nu_1+\nu_2-t}}{\nu_1\nu_2}\cdot \left(\frac{t\nu_1\nu_2}{n}\right)^s \\
        &\leq (1+o(1)) \sum_{1 \leq t \leq \nu_1 \leq \nu_2 \leq n^\alpha} \frac{16t}{n} \\
        &\leq n^{-1 + 4\alpha + o(1)}. \qedhere
    \end{align*}
\end{proof}

\subsection{Bounding \texorpdfstring{$\cD_2$}{D2}}\label{subsec:few-rare}
To bound $\cD_2$, we will need that the probability that two matchings that differ on disjoint subsets of vertices are both stable is approximately the product that they are individually stable. This is one main advantage of working under the conditional distribution:  we gain approximate independence for most pairs of matchings. 

\begin{lemma}\label{lem:disjoint-factor}
    Let $\Pi_1$ and $\Pi_2$ differ from $\Pi$ on disjoint nodes so that $|\Pi_1 \triangle \Pi|, |\Pi_2 \triangle \Pi| \leq n^{1/4}$.  Then \begin{equation*}
        \P(\Pi_1,\Pi_2, \cG_\Pi \,|\, \Pi) = (1 \pm \widetilde{O}(n^{-1/3}))\P(\Pi_1, \cG_\Pi \,|\, \Pi)\P(\Pi_2, \cG_\Pi \,|\,\Pi)\,.
    \end{equation*}    
\end{lemma}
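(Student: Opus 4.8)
Let me think about what this lemma is asking. We want to show that for matchings $\Pi_1, \Pi_2$ differing from $\Pi$ on disjoint vertex sets, the joint stability probability (intersected with $\cG_\Pi$) factors approximately as the product.

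The key insight from the paper's "conditional approach to stability probabilities" is that once we condition on $\Pi \in \cS$ and work with the integral representation, we divide by $\prod_{ij \notin \Pi}(1 - x_i x_j)$. Then the terms involving $(i,j) \in V^c \times V^c$ cancel. For a single matching $\Pi_1$ with vertex set $V_1 = V(\Pi \triangle \Pi_1)$, the conditional stability probability becomes an integral over $(y_i)_{i \in V_1}$ of something like $\prod_{i \in A_1} \one\{y_i < x_i\} \prod_{ij \in B_1 \times V_1^c} \frac{1 - y_i x_j}{1 - x_i x_j}\one\{y_i > x_i\}$ (plus negligible $V_1 \times V_1$ contributions), integrated further against the conditional law of $(x_i)$.

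**The approach.**

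First I would invoke the integral representations. By the setup in Section \ref{sec:setup} (specifically the analogues of Lemma \ref{lem:single:exact} and Lemma \ref{lem:two-point} that the paper references), $\P(\Pi_1, \Pi_2, \cG_\Pi \mid \Pi)$ is expressed by integrating, over the conditional distribution of the $x$-variables restricted to $\cG_\Pi$, a product over constraint-edges. The crucial structural observation is that since $V_1 := V(\Pi \triangle \Pi_1)$ and $V_2 := V(\Pi \triangle \Pi_2)$ are disjoint, the new variables $(y_i)_{i \in V_1}$ and $(z_i)_{i \in V_2}$ are introduced on disjoint index sets, and every constraint edge in the product for $\Pi_1$ lives in $V_1 \times (V_1^c)$ while every constraint edge for $\Pi_2$ lives in $V_2 \times (V_2^c)$. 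The only potential coupling between the two integrals comes from (a) the shared dependence on $(x_j)_{j}$ for $j$ in the complement, and (b) constraint edges with one endpoint in $V_1$ and the other in $V_2$ (rather than in $V^c$).

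Second, I would handle the cross terms. An edge $(i, j)$ with $i \in V_1$, $j \in V_2$: in the single-matching factorization for $\Pi_1$, this edge contributes $\frac{1 - y_i x_j}{1 - x_i x_j}$ (if $i \in B_1$), using the value $x_j$ for $j$ — but under $\Pi_2$'s stability, $j$'s relevant value is $z_j$, not $x_j$. So there is a genuine discrepancy. The plan is to show this discrepancy is of size $\widetilde{O}(n^{-1/3})$ in aggregate: since $|V_1|, |V_2| \leq n^{1/4}$, there are at most $n^{1/4} \cdot n^{1/4} = n^{1/2}$ such cross edges, and on $\cG_\Pi$ each variable is $O(\log^2 n / \sqrt{n})$, so each factor $\frac{1 - y_i z_j}{1 - x_i x_j}$ differs from $\frac{1 - y_i x_j}{1 - x_i x_j}$ by $O(\log^2 n / \sqrt n) \cdot O(\log^2 n/\sqrt n) = \widetilde O(1/n)$ multiplicatively; hmm, but $n^{1/2}$ edges times $\widetilde O(1/n)$ per edge is $\widetilde O(n^{-1/2})$, which is even better than needed. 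More carefully, one replaces $\sum_{j \in V^c} x_j$ by $\sum_j x_j$ in the exponential-approximation step, absorbing the $V_1, V_2$ contributions (each of size $O(n^{1/4} \cdot \log^2 n/\sqrt n) = \widetilde O(n^{-1/4})$) into the error, and uses the concentration of $\sum_j x_j$ around $\sqrt n$ guaranteed by $\cG_\Pi$.

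Third, the cleanest route: rather than redo the $\Pi_1, \Pi_2$ integral from scratch, I would prove it by comparison. Write the conditional integrand for $\{\Pi_1, \Pi_2\}$ and for each of $\{\Pi_1\}$, $\{\Pi_2\}$ separately using the explicit products. Show that the $\{\Pi_1,\Pi_2\}$ integrand equals the product of the $\{\Pi_1\}$ and $\{\Pi_2\}$ integrands times a correction factor $\prod_{i \in V_1, j \in V_2, (i,j) \notin \Pi_1 \cup \Pi_2} \frac{(1 - x_i x_j - y_i y_j + (x_i \wedge y_i)(x_j \wedge y_j))(1 - x_i x_j)}{(1 - x_i x_j - y_i^{(1)} x_j + \ldots)(1 - x_i x_j - x_i y_j^{(2)} + \ldots)}$ — essentially the edges straddling $V_1$ and $V_2$ which are "double counted" or "differently counted." On $\cG_\Pi$, every term in this correction is $1 + \widetilde O(1/n)$, and there are $\leq |V_1||V_2| \leq n^{1/2}$ of them, so the correction is $1 \pm \widetilde O(n^{-1/2})$ uniformly. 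Integrating and using Lemma \ref{lem:prob-G-UB} to control the pieces outside $\cG_\Pi$ then yields the claim with error $\widetilde O(n^{-1/3})$ (the $n^{-1/3}$ rather than $n^{-1/2}$ coming from the error terms already present in the single-matching estimate Lemma \ref{lem:first-moment} and in $\cG_\Pi$'s defining inequalities).

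**Main obstacle.**

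The main technical obstacle is bookkeeping the cross edges between $V_1$ and $V_2$ correctly and verifying that the integrand for the joint event really does factor up to the claimed error — in particular, making precise the claim that the "interaction" between the two symmetric-difference regions is mediated only through the well-concentrated global quantities ($\sum_j x_j$, $\sum_j x_j^2$) that $\cG_\Pi$ pins down, so that conditioning on $\cG_\Pi$ decouples them. A secondary but routine obstacle is propagating the error terms: one must check that intersecting with $\cG_\Pi$ in all three probabilities (joint and the two marginals) is consistent, i.e. that $\P(\Pi_1, \Pi_2, \cG_\Pi^c \mid \Pi)$ and the analogous single-matching tail terms are $\widetilde O(n^{-1/3})$ relative to the main terms, which follows from Lemma \ref{lem:prob-G-UB} together with Lemma \ref{lem:first-moment} and the trivial bound $\P(\Pi_1,\Pi_2 \mid \Pi) \leq \P(\Pi_1 \mid \Pi)$.
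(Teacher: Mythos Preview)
Your proposal is correct and follows essentially the same approach as the paper: both arguments identify that the only interaction between the $\Pi_1$ and $\Pi_2$ factors (after dividing out $\prod_{ij\notin\Pi}(1-x_ix_j)$) comes from pairs $(i,j)\in V\times V$, of which there are at most $|V_1||V_2|\le n^{1/2}$, each contributing a multiplicative $1\pm\widetilde O(n^{-1})$; the paper then integrates via Lemma~\ref{lem:integral-compute} exactly as in Lemma~\ref{lem:two-point-conditional}. One small point you gloss over: the bound ``each variable is $O(\log^2 n/\sqrt n)$'' is guaranteed by $\cG_\Pi$ only for the $x_i$, not for the newly introduced $y_i,z_i$; the paper handles this by passing through oriented versions $\vec\Pi_1,\vec\Pi_2$ and invoking Lemma~\ref{lem:no-big-y} to discard the event $\cB=\{\exists\,i: y_i\text{ or }z_i\ge 2\log^2 n/\sqrt n\}$ before making the per-edge estimate.
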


This is proven in Section \ref{subsec:higher-moments}. As an easy corollary, we have that the probability $k$ cycles are simultaneously stable approximately factors as well: \begin{corollary}\label{cor:k-factor}
    Let $\Pi_1, \ldots, \Pi_k$ be such that $\Pi_i \triangle \Pi$ is a cycle of length at most $n^{1/4}$ for each $i \in [k]$ and so that $(\Pi_i \triangle \Pi)_i$ are mutually  disjoint. Then \begin{equation*}
        \P(\Pi_1, \ldots, \Pi_k, \cG_\Pi \,|\, \Pi) = (1 \pm \widetilde O(kn^{-1/3})) \prod_{i=1}^k \P(\Pi_i, \cG_\Pi \,|\, \Pi)\,.
    \end{equation*}
\end{corollary}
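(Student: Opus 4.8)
The plan is to deduce the corollary from Lemma~\ref{lem:first-moment} by a single ``event swap.'' Write $V_i$ for the vertex set of $\Pi_i\triangle\Pi$, so the $V_i$ are disjoint with $|V_i|=2\nu_i\le n^{1/4}$, and let $\Pi_{1,\ldots,k}$ be the matching with $\Pi_{1,\ldots,k}\triangle\Pi=\bigcup_i(\Pi_i\triangle\Pi)$, which differs from $\Pi$ on $k$ cycles. Assuming, as holds in all our applications, that $\sum_i|\Pi_i\triangle\Pi|\le n^{1/4}$ (so Lemma~\ref{lem:first-moment} applies to $\Pi_{1,\ldots,k}$ with $\mu=k$), we get $\P(\Pi_{1,\ldots,k},\cG_\Pi\,|\,\Pi)=(1\pm\widetilde{O}(n^{-1/3}))2^k n^{-\sum_i\nu_i}$, and applying Lemma~\ref{lem:first-moment} to each $\Pi_i$ with $\mu=1$ and multiplying gives $\prod_{i=1}^k\P(\Pi_i,\cG_\Pi\,|\,\Pi)=(1\pm\widetilde{O}(kn^{-1/3}))2^k n^{-\sum_i\nu_i}$. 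So it suffices to show that, under the conditional measure intersected with $\cG_\Pi$, the events $\bigwedge_i\{\Pi_i\in\cS\}$ and $\{\Pi_{1,\ldots,k}\in\cS\}$ have the same probability up to a factor $1\pm\widetilde{O}(kn^{-1/3})$. (Equivalently one can induct on $k$, peeling $\Pi_k$ off the merge $\Pi_{1,\ldots,k-1}$ via Lemma~\ref{lem:disjoint-factor}; this requires the same comparison of events at each step.)

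To compare the two events, observe that $\Pi_{1,\ldots,k}$ agrees with $\Pi_i$ on $V_i$ and with $\Pi$ off $\bigcup_i V_i$. Consequently, for every pair $(a,b)\notin\Pi$ other than a \emph{cross pair} with $a\in V_i$, $b\in V_j$, $i\ne j$, the stability condition imposed by $\{\Pi_{1,\ldots,k}\in\cS\}$ coincides with the one imposed by $\bigwedge_i\{\Pi_i\in\cS\}$ (pairs inside a single block, pairs with at most one endpoint in $\bigcup_i V_i$, and pairs inside $(\bigcup_i V_i)^c$ all reduce either to stability of some $\Pi_i$ restricted to $V_i$ or to stability of $\Pi$, which already holds). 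On $\cG_\Pi$ together with the stability of the small matchings, all the values $X_{v,\cdot}$ with $v\in\bigcup_i V_i$ that enter these conditions are $\widetilde{O}(n^{-1/2})$, whereas for a cross pair $(a,b)$ the variables $X_{a,b},X_{b,a}$ are fresh uniforms; hence the two sets of conditions disagree on a cross pair only when both $X_{a,b}$ and $X_{b,a}$ are $\widetilde{O}(n^{-1/2})$, an event of probability $\widetilde{O}(n^{-1})$. There are at most $(\sum_i 2\nu_i)^2$ cross pairs, and crucially all these disagreement events are measurable with respect to preferences localized near $\bigcup_i V_i$; by the disjointness of the blocks and the same conditional-integral factorization that underlies Lemma~\ref{lem:disjoint-factor}, they decouple from the bulk of the stability constraints, so the two conditional probabilities agree up to $1\pm\widetilde{O}\big((\sum_i\nu_i)^2/n\big)$, which is absorbed into $1\pm\widetilde{O}(kn^{-1/3})$ in our regime.

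Putting the pieces together yields $\P(\Pi_1,\ldots,\Pi_k,\cG_\Pi\,|\,\Pi)=(1\pm\widetilde{O}(kn^{-1/3}))2^k n^{-\sum_i\nu_i}=(1\pm\widetilde{O}(kn^{-1/3}))\prod_{i=1}^k\P(\Pi_i,\cG_\Pi\,|\,\Pi)$, which is the claim. The main obstacle is the event comparison in the second paragraph: the target probability is exponentially small in $\sum_i\nu_i$, so an additive estimate on the ``disagreement'' events is far too weak, and one genuinely needs the rare cross-pair corrections to factor out — i.e.\ the local discrepancies near $\bigcup_i V_i$ must be shown asymptotically independent of the rest of the stability event. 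This is precisely the mechanism that makes Lemma~\ref{lem:disjoint-factor} true, so the cleanest route is probably to bypass the swap entirely and re-run the conditional-integral computation of Section~\ref{sec:lemmas} directly with $k$ disjoint vertex blocks, where the integrand visibly factorizes over the blocks up to $\prod_i(1\pm\widetilde{O}(n^{-1/3}))$; the ``easy corollary'' framing reflects that essentially nothing new beyond the proof of Lemma~\ref{lem:disjoint-factor} is required.
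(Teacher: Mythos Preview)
Your proposal is structured around an ``event swap'' between $\bigwedge_i\{\Pi_i\in\cS\}$ and $\{\Pi_{1,\ldots,k}\in\cS\}$, but as you yourself point out in the last paragraph, this comparison is the entire difficulty: the target probability is of order $n^{-\sum_i\nu_i}$, so showing that the cross-pair discrepancies have small \emph{additive} probability $\widetilde O(n^{-1})$ each is useless, and your claim that they ``decouple from the bulk'' is not justified beyond an appeal to the mechanism behind Lemma~\ref{lem:disjoint-factor}. The inductive variant you mention (peel $\Pi_k$ off the merge $\Pi_{1,\ldots,k-1}$) suffers the same defect: Lemma~\ref{lem:disjoint-factor} applied to $\Pi_{1,\ldots,k-1}$ and $\Pi_k$ controls $\P(\Pi_{1,\ldots,k-1},\Pi_k\,|\,\Pi)$, not $\P(\Pi_1,\ldots,\Pi_k\,|\,\Pi)$, and passing between the two is again exactly the event swap you have not proved. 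You also quietly add the hypothesis $\sum_i|\Pi_i\triangle\Pi|\le n^{1/4}$, which is not part of the stated corollary.

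The paper's intended argument is precisely the route you arrive at in your final sentence, and it avoids the swap entirely. One simply reruns the proof of Lemma~\ref{lem:disjoint-factor} with $k$ disjoint blocks $V_1,\ldots,V_k$ rather than two: writing $V=\bigcup_i V_i$ and conditioning on $\bx$ together with the $k$ vectors $(X_{j,\Pi_i(j)})_{j\in V_i}$, the identity \eqref{eq:stability-event-triple} extends verbatim, the pairs in $V^c\times V^c$ and $A\times V^c$ drop out as before, the pairs in $V\times V$ (of which there are at most $|V|^2\le k^2 n^{1/2}$, hence contributing $1\pm\widetilde O(k^2 n^{-1/2})$ on $\cB^c$) are negligible, and the analogue of \eqref{eq:double-disjoint-factor} is now a product of $k$ factors $\prod_{(i,j)\in B_\ell\times V^c}\frac{1-X_{i,\Pi_\ell(i)}x_j}{1-x_ix_j}$. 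Integrating each factor and applying Lemma~\ref{lem:integral-compute} exactly as in Lemma~\ref{lem:two-point-conditional} gives the $k$-fold factorization with error $(1\pm\widetilde O(n^{-1/3}))^k=1\pm\widetilde O(kn^{-1/3})$. Nothing new is needed beyond the two-block proof, which is why the paper states it without a separate argument.
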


We use Lemma \ref{lem:overlapping-cycles-rare} and Corollary \ref{cor:k-factor} to estimate sufficiently high (factorial) moments of $X_{\leq n^\alpha}^\circ$. In particular, we will restrict to the high probability event that all cycles are disjoint, and use approximate independence to show that the number of cycles behaves similarly to a Poisson random variable. 

First, Lemma \ref{lem:first-moment} combined with Fact \ref{fact:cycle-count} shows that the expected number of cycles is logarithmic in size.  We will in fact need a count for the expected number of tuples.  
\begin{lemma}\label{lem:N_k-expectation}
    For $k \leq 100 \log n$, let $N_k$ be the number of $k$-tuples $(\Pi^{(j)})_{j \in [k]}$ so that $\Pi^{(j)} \triangle \Pi$ is a cycle of length $\leq n^{\alpha}$ for $\alpha < 1/4$ and so that $(\Pi^{(j)} \triangle \Pi)_{j \in [k]}$ are disjoint.  Then $$\E[N_k \one_{\cG_\Pi} \,|\, \Pi] = (1 \pm \widetilde{O}(n^{-1/3}))(h_{n^{\alpha}} - 1)^k $$
    where $h_m = \sum_{j = 1}^m j^{-1}$ is the $m$th harmonic number.
\end{lemma}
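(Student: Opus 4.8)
The plan is to expand $\E[N_k\one_{\cG_\Pi}\mid\Pi]$ as a sum over ordered $k$-tuples of single-cycle matchings and evaluate it by combining Corollary \ref{cor:k-factor}, Lemma \ref{lem:first-moment}, and a refinement of the count in Fact \ref{fact:cycle-count}.

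First I would write
$$\E[N_k\one_{\cG_\Pi}\mid\Pi]=\sum\P\big(\Pi^{(1)},\dots,\Pi^{(k)},\cG_\Pi\mid\Pi\big),$$
where the sum ranges over ordered $k$-tuples $(\Pi^{(j)})_{j\in[k]}$ with each $\Pi^{(j)}\in\cM^\circ_{\le n^\alpha}$ and with the cycles $(\Pi^{(j)}\triangle\Pi)_j$ pairwise disjoint; write $2\le\nu_j\le n^\alpha$ for the parameter of $\Pi^{(j)}$, so that $\Pi^{(j)}\triangle\Pi$ has $2\nu_j$ vertices and $|\Pi\setminus\Pi^{(j)}|=\nu_j$. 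Since $k\le 100\log n$, Corollary \ref{cor:k-factor} replaces each summand by $(1\pm\widetilde O(n^{-1/3}))\prod_j\P(\Pi^{(j)},\cG_\Pi\mid\Pi)$, and Lemma \ref{lem:first-moment} (with $\mu=1$) evaluates each factor as $(1\pm\widetilde O(n^{-1/3}))\,2\,n^{-\nu_j}$. Raising either error term to the $k$-th power keeps it $1\pm\widetilde O(n^{-1/3})$ because $k=O(\log n)$ (using $(1+a)^k\le e^{ka}$ and $1-ka\le(1-a)^k$). Hence the expectation equals $(1\pm\widetilde O(n^{-1/3}))\sum 2^k n^{-(\nu_1+\dots+\nu_k)}$ over the same family of tuples.

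Next I would group the tuples by their length profile $(\nu_1,\dots,\nu_k)\in\{2,\dots,n^\alpha\}^k$. Repeating the contraction argument of Fact \ref{fact:cycle-count}, an ordered tuple of disjoint cycles with this profile is the data of disjoint $\nu_j$-subsets $S_1,\dots,S_k$ of the $n/2$ edges of $\Pi$ together with, independently for each $j$, an oriented cyclic arrangement of $S_j$, giving the exact count
$$\frac{(n/2)!}{(n/2-\sum_j\nu_j)!\,\prod_j\nu_j!}\cdot\prod_j\frac{(\nu_j-1)!}{2}\,2^{\nu_j}.$$
Since $\sum_j\nu_j\le 100\,n^\alpha\log n=o(\sqrt n)$, the falling factorial is $(1\pm\widetilde O(n^{2\alpha-1}))(n/2)^{\sum_j\nu_j}$; as $\alpha<1/4$ gives $2\alpha-1<-1/2$, this error is $\widetilde O(n^{-1/2})$ and is absorbed into $\widetilde O(n^{-1/3})$. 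The count then simplifies to $(1\pm\widetilde O(n^{-1/3}))\prod_j\frac{n^{\nu_j}}{2\nu_j}$, and multiplying by $2^kn^{-\sum_j\nu_j}=\prod_j 2n^{-\nu_j}$ collapses it to $(1\pm\widetilde O(n^{-1/3}))\prod_j\nu_j^{-1}$. Summing over all profiles,
$$\E[N_k\one_{\cG_\Pi}\mid\Pi]=(1\pm\widetilde O(n^{-1/3}))\Big(\sum_{\nu=2}^{n^\alpha}\tfrac1\nu\Big)^k=(1\pm\widetilde O(n^{-1/3}))(h_{n^\alpha}-1)^k.$$

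The computation is essentially routine; the only point requiring care is the bookkeeping of error terms once $k$ is as large as $100\log n$. One must verify both that the $k$-fold product of factors $(1\pm\widetilde O(n^{-1/3}))$ coming from Corollary \ref{cor:k-factor} and Lemma \ref{lem:first-moment} stays $1\pm\widetilde O(n^{-1/3})$, and that the falling-factorial approximation in the cycle count is uniform over all profiles with $\sum_j\nu_j$ up to $\Theta(n^\alpha\log n)$. The latter is the one place the hypothesis $\alpha<1/4$ (rather than merely $\alpha<1/2$) is used, and comfortably so.
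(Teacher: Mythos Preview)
Your proposal is correct and follows essentially the same approach as the paper's own proof: expand the expectation as a sum over tuples, apply Corollary~\ref{cor:k-factor} and Lemma~\ref{lem:first-moment}, then use the counting from Fact~\ref{fact:cycle-count} to collapse the sum to $(h_{n^\alpha}-1)^k$. You are somewhat more explicit than the paper about two points---the $k$-fold propagation of the $\widetilde O(n^{-1/3})$ error and the exact disjoint-cycle count via the falling factorial---but these are just the details behind the paper's one-line invocation of Fact~\ref{fact:cycle-count}, not a different argument.
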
 
\begin{proof}
    We have \begin{align*}
        \E[N_k \one_{\cG_\Pi} \,|\, \Pi] &= \sum_{(\Pi_j)_{j \in [k]}} \P(\Pi_1,\ldots,\Pi_k, \cG_{\Pi} \,|\, \Pi) = (1 \pm \widetilde{O}(n^{-1/3})) \sum_{(\Pi_j)_{j \in [k]}} \prod_{i = 1}^k \P(\Pi_i, \cG_\Pi \,|\, \Pi) \\
        &= (1 \pm \widetilde{O}(n^{-1/3}))  \sum_{(\Pi_j)_{j \in [k]}} 2^k n^{-\nu_1 - \ldots - \nu_k}
    \end{align*}
    where the second equality is by Corollary \ref{cor:k-factor} and the third is by Lemma \ref{lem:first-moment}, where we set $2\nu_j = |\Pi \triangle \Pi_j|$.  By Fact \ref{fact:cycle-count} we have \begin{equation*}
        \sum_{(\Pi_j)_{j \in [k]}} 2^k n^{-\nu_1 - \ldots - \nu_k} = (1 \pm {O}(n^{-1/2})) \prod_{j = 1}^k \sum_{\nu_j = 2}^{n^{\alpha}} 2 \cdot n^{-\nu_j} \cdot \frac{1}{2}\cdot \frac{n^{\nu_j}}{\nu_j} = (1 \pm {O}(n^{-1/2}))(h_{n^{\alpha}} - 1)^k\,. \qedhere
    \end{equation*}
\end{proof}

Next, we can truncate at moments of logarithmic order as shown by the following lemma.
\begin{lemma}\label{lem:truncation}
    Let $\alpha < 1/4$, then $\P(X_{\leq n^\alpha}^\circ \geq 100 \log n\,|\,\Pi) \leq n^{-1+4\alpha+o(1)}$.
\end{lemma}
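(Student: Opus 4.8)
The plan is to control the upper tail of $X^\circ_{\leq n^\alpha}$ through a high falling-factorial moment, exploiting that on $\cD_1^c$ the stable single-cycle matchings differ from $\Pi$ on pairwise disjoint cycles, so that the count $N_k$ of Lemma~\ref{lem:N_k-expectation} coincides with the falling factorial of $X^\circ_{\leq n^\alpha}$. First I would split
\[
\P\big(X^\circ_{\leq n^\alpha} \geq 100\log n \mid \Pi\big) \leq \P(\cD_1 \mid \Pi) + \P(\cG_\Pi^c \mid \Pi) + \P\big(X^\circ_{\leq n^\alpha} \geq 100\log n,\ \cD_1^c,\ \cG_\Pi \mid \Pi\big),
\]
and dispose of the first two terms with Lemmas~\ref{lem:overlapping-cycles-rare} and~\ref{lem:prob-G-UB}, which bound them by $n^{-1+4\alpha+o(1)}$ and $e^{-\Omega(\log^2 n)}$ respectively.

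For the third term I would fix $k = \lfloor 100\log n\rfloor$, so that $k \leq 100\log n$ and Lemma~\ref{lem:N_k-expectation} applies. On $\cD_1^c$, any two distinct elements of $\cS \cap \cM^\circ_{\leq n^\alpha}$ differ from $\Pi$ on disjoint cycles, and since the entries of any tuple counted by $N_k$ are automatically distinct, this gives $N_k\,\one_{\cD_1^c} = (X^\circ_{\leq n^\alpha})_k\,\one_{\cD_1^c}$, where $(m)_k = m(m-1)\cdots(m-k+1)$. On the event in question we have $X^\circ_{\leq n^\alpha} \geq 100\log n \geq k$, hence $N_k \geq k!$ there, so Markov's inequality combined with Lemma~\ref{lem:N_k-expectation} yields
\[
\P\big(X^\circ_{\leq n^\alpha} \geq 100\log n,\ \cD_1^c,\ \cG_\Pi \mid \Pi\big) \leq \frac{\E[N_k \one_{\cG_\Pi} \mid \Pi]}{k!} \leq \frac{2\,(h_{n^\alpha}-1)^k}{k!}.
\]
Using $h_{n^\alpha}-1 \leq \alpha\log n$ for $n$ large, $k! \geq (k/e)^k$, and $k = 100\log n + O(1)$, the right-hand side is at most $\big(e\alpha\log n/k\big)^k = \big(e\alpha/100 + o(1)\big)^k = n^{\,100\log(e\alpha/100) + o(1)}$.

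Finally, for $\alpha < 1/4$ we have $e\alpha/100 < e/400 < 1/100$, so $100\log(e\alpha/100) < -400$ and this last term is negligible next to $n^{-1+4\alpha+o(1)}$; summing the three contributions gives the claim. I do not expect a genuine obstacle here: the only points requiring a little care are the bookkeeping identity $N_k\,\one_{\cD_1^c} = (X^\circ_{\leq n^\alpha})_k\,\one_{\cD_1^c}$, which rests on the fact that disjoint cycles are in particular distinct, and checking that $k = \lfloor 100\log n\rfloor \leq 100\log n$ so the hypothesis of Lemma~\ref{lem:N_k-expectation} is met.
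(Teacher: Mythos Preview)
Your proposal is correct and follows essentially the same argument as the paper: split off $\cD_1$ and $\cG_\Pi^c$, identify $N_k$ with $(X^\circ_{\leq n^\alpha})_k$ on $\cD_1^c$, and apply Markov to the $k$-th factorial moment via Lemma~\ref{lem:N_k-expectation} with $k\approx 100\log n$. The only differences are cosmetic---you take $k=\lfloor 100\log n\rfloor$ and spell out the identity $N_k\one_{\cD_1^c}=(X^\circ_{\leq n^\alpha})_k\one_{\cD_1^c}$ explicitly---and your final numerical bound $n^{100\log(e\alpha/100)+o(1)}$ is the same in spirit as the paper's $n^{-100+o(1)}$.
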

\begin{proof}
    Let $\cG = \cG_\Pi \cap \cD_1^c$ and set $k = 100 \log n$ and then note $$\P(X_{\leq n^\alpha}^\circ \geq 100 \log n\,|\,\Pi) \leq \E\left[ \binom{X_{\leq n^\alpha}^\circ}{k} \one_{\cG} \,\big|\, \Pi \right] + \P(\cG^c\,|\,\Pi) = \frac{1}{k!}\E[N_k \one_{\cG} \,|\Pi] + n^{-1 + 4\alpha + o(1)}$$
    by Lemmas \ref{lem:overlapping-cycles-rare} and \ref{lem:prob-G-UB}.  By Lemma \ref{lem:N_k-expectation} we have  $$\frac{1}{k!}\E[N_k \one_{\cG} \,|\Pi] \leq \frac{1}{k!}\E[N_k \one_{\cG_\Pi} \,|\Pi] \leq (1 + o(1)) \frac{(\alpha \log n)^{k}}{k!} \leq n^{-100 + o(1)}$$
    completing the proof.
\end{proof}

With these estimates, we can use a truncated expansion of the moment generating function to prove Lemma \ref{lem:few-cycles-rare}.
\begin{proof}[Proof of Lemma \ref{lem:few-cycles-rare}]
    We intersect with the events $\cG = \cG_\Pi \cap \cD_1^c \cap \{X_{\leq n^\alpha}^\circ \leq 100 \log n\}$ which has total probability at least $1-n^{-1+4\alpha + o(1)}$.  We will take a negative exponential moment; on the event $\cG$ we will have a simpler version of the Taylor expansion to evaluate this exponential moment.
By the binomial theorem, for each $s \geq 0$ and $x \in \mathbb{N}$ we have \begin{equation}
        e^{-sx} = \sum_{k \geq 0} \frac{(-1)^k (1 - e^{-s})^k}{k!} (x)_k\,.
    \end{equation}
and on the event $\cG$ we have that $$e^{-s X_{\leq n^{\alpha}}^\circ} = \sum_{k = 0}^{100 \log n}  \frac{(-1)^k (1 - e^{-s})^k}{k!} N_k =: Y$$
    where $N_k$ is defined in Lemma \ref{lem:N_k-expectation}. We then bound \begin{align*}
        \P(X_{\leq n^\alpha}^\circ \leq \gamma \log n, \cG \,|\, \Pi) &= \P(e^{-sX_{\leq n^\alpha}^\circ} \geq n^{-s\gamma}, \cG \,|\, \Pi) = \P(Y \geq n^{-s\gamma}, \cG \,|\, \Pi)  \leq \P(Y \geq n^{-s \gamma}, \cG_\Pi \,|\,\Pi) \\
        &\leq n^{s\gamma} \E[Y \one_{\cG_\Pi}]
    \end{align*}
    by Markov's inequality. Using Lemma \ref{lem:N_k-expectation} we can bound
    \begin{align*}
        \E[Y\one_{\cG_\Pi}] &= \sum_{k=0}^{100\log n}\frac{(-1)^k(1-e^{-s})^k}{k!}\E[N_k\one_{\cG_\Pi}] \\
        &\leq \sum_{k=0}^{100\log n} \frac{(-1)^k(1-e^{-s})^k}{k!}(h_{n^{\alpha}} - 1)^k + n^{-1/3+o(1)}\sum_{k = 0}^{100 \log n} \frac{(1 - e^{-s})^k}{k!} \left(h_{n^\alpha}\right)^k \\
        &= n^{o(1)}\exp(-{(1-e^{-s})}(h_{n^\alpha} - 1))  + n^{-1/3 + \alpha(1 - e^{-s}) + o(1)} \\
        &= n^{-\alpha(1 - e^{-s}) + o(1)} + n^{-1/3 + \alpha(1 - e^{-s}) + o(1)}\,. \qedhere
    \end{align*}
\end{proof}

\subsection{Bounding \texorpdfstring{$\cD_3$}{D3}}\label{subsec:combine-rare}
Finally, to bound $\cD_3$, we need to understand when two stable matchings that differ on disjoint nodes can be combined to form a new stable matching. This is captured by the following lemma, which shows that the conditional probability the combined matching is not stable is polynomially small.

\begin{lemma}\label{lem:not-union}
     Let $\Pi_1$ and $\Pi_2$ differ from $\Pi$ on disjoint nodes so that $|\Pi_1 \triangle \Pi|, |\Pi_2 \triangle \Pi| \leq n^{\alpha}$ and set $\Pi_{1,2}$ to be the matching so that $\Pi_{1,2} \triangle \Pi =(\Pi_1 \triangle \Pi) \cup (\Pi_2 \triangle \Pi)\,. $  Then \begin{equation}
         \P(\Pi_1,\Pi_2,\neg \Pi_{1,2}, \cG_\Pi\,|\,\Pi) \leq n^{-1+2\alpha+o(1)} \cdot \P(\Pi_1,\cG_\Pi\,|\,\Pi)\P(\Pi_2,\cG_\Pi\,|\,\Pi)
     \end{equation}
\end{lemma}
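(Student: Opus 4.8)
The plan is to pin down the unique new obstruction to stability of $\Pi_{1,2}$ and then pay for it under the conditional measure. Write $V_1, V_2$ for the vertex sets of the cycles in $\Pi\triangle\Pi_1$ and $\Pi\triangle\Pi_2$; these are disjoint by hypothesis, with $|V_1|=|\Pi_1\triangle\Pi|\le n^\alpha$ and $|V_2|=|\Pi_2\triangle\Pi|\le n^\alpha$, and for $a\in V_1$ one has $\Pi(a),\Pi_1(a)\in V_1$ (and symmetrically for $V_2$). Set $x_a=X_{a,\Pi(a)}$, and for $a\in V_1$ put $y_a=X_{a,\Pi_1(a)}$, for $a\in V_2$ put $z_a=X_{a,\Pi_2(a)}$; then $\Pi_{1,2}$ uses $y_a$ on $V_1$, $z_a$ on $V_2$, and $x_a$ elsewhere. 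The first step is purely structural. Suppose $\Pi_1,\Pi_2\in\cS$ but $\Pi_{1,2}\notin\cS$, and take a blocking pair $(a,b)$ of $\Pi_{1,2}$. If $a,b$ lie in the same one of $V_1$, $V_2$, $(V_1\cup V_2)^c$, or if at least one lies in $(V_1\cup V_2)^c$, then $\Pi_{1,2}$ agrees with one of $\Pi,\Pi_1,\Pi_2$ on both $a$ and $b$, so $(a,b)$ would block that matching too, contradicting stability. Hence $(a,b)$ is a ``cross pair'' $(i,j)$ with $i\in V_1$, $j\in V_2$. For such a pair, stability of $\Pi_1$ at $(i,j)$ says $X_{i,j}>y_i$ or $X_{j,i}>x_j$ (using $\Pi_1(j)=\Pi(j)$), stability of $\Pi_2$ at $(i,j)$ says $X_{i,j}>x_i$ or $X_{j,i}>z_j$, and $(i,j)$ blocking $\Pi_{1,2}$ says $X_{i,j}<y_i$ and $X_{j,i}<z_j$; together these force
\[
  x_i<X_{i,j}<y_i \qquad\text{and}\qquad x_j<X_{j,i}<z_j ,
\]
in particular $y_i>x_i$ and $z_j>x_j$. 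A union bound over the at most $|V_1|\,|V_2|\le n^{2\alpha}$ cross pairs gives
\[
  \P(\Pi_1,\Pi_2,\neg\Pi_{1,2},\cG_\Pi\mid\Pi)\ \le\ \sum_{i\in V_1,\ j\in V_2}\P(\Pi_1,\Pi_2,\ x_i<X_{i,j}<y_i,\ x_j<X_{j,i}<z_j,\ \cG_\Pi\mid\Pi).
\]

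Fix a cross pair $(i,j)$. The coordinates $X_{i,j}$ and $X_{j,i}$ enter the stability constraints only through the pair-$(i,j)$ constraints of $\Pi$, $\Pi_1$, $\Pi_2$ — not through $\cG_\Pi$ nor any $x_a,y_a,z_a$, since $\Pi(i),\Pi_1(i)\in V_1$ and $\Pi(j),\Pi_2(j)\in V_2$ are all distinct from $i,j$. On the event $\{x_i<X_{i,j}<y_i\}\cap\{x_j<X_{j,i}<z_j\}$ all three of those pair-$(i,j)$ constraints hold automatically, so they may be dropped; integrating $X_{i,j}$ over $(x_i,y_i)$ and $X_{j,i}$ over $(x_j,z_j)$ then contributes exactly the weight $(y_i-x_i)^+(z_j-x_j)^+$ (the superscript recording that the intervals must be nonempty). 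Reinstating the three pair-$(i,j)$ constraints simultaneously multiplies the integrand by the probability that $(i,j)$ blocks none of $\Pi,\Pi_1,\Pi_2$, which is $1-\widetilde{O}(n^{-1/2})$ on $\cG_\Pi$ since there $x_i,x_j=\widetilde{O}(n^{-1/2})$ while $y_i,z_j\le 1$. Therefore
\[
  \P(\Pi_1,\Pi_2,\ x_i<X_{i,j}<y_i,\ x_j<X_{j,i}<z_j,\ \cG_\Pi\mid\Pi)\ \le\ (1+o(1))\,\P(\Pi_1,\Pi_2,\cG_\Pi\mid\Pi)\,\E\!\left[(y_i-x_i)^+(z_j-x_j)^+\,\middle|\,\Pi_1,\Pi_2,\cG_\Pi,\Pi\right].
\]

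It remains to bound the conditional expectation by $\widetilde{O}(n^{-1})$. Here I would use that $i\in V_1$ and $j\in V_2$ lie in disjoint blocks: by the same mechanism behind Lemma~\ref{lem:disjoint-factor} (the $V_1\times V_2$ interaction terms in the conditional integrand are negligible, and the $V_1$- and $V_2$-parts couple only through the global sum $\sum_a x_a$, which is concentrated around $\sqrt n$ on $\cG_\Pi$), the two weights decouple up to a factor $1+o(1)$, so it suffices to show $\E[(y_i-x_i)^+\mid\Pi_1,\cG_\Pi,\Pi]=\widetilde{O}(n^{-1/2})$ and likewise for $(z_j-x_j)^+$. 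This is a conditional first-moment estimate of exactly the flavour in Section~\ref{sec:lemmas}: after conditioning on $\Pi$ and on $\Pi_1$, the density in $y_i$ restricted to $\{y_i>x_i\}$ behaves like $\exp(-(y_i-x_i)\sum_a x_a)$, so $(y_i-x_i)^+$ behaves like an $\mathrm{Exp}(\sum_a x_a)$ variable with $\sum_a x_a\asymp\sqrt n$ on $\cG_\Pi$, whose mean is $\widetilde{O}(n^{-1/2})$. Feeding $\widetilde{O}(n^{-1})$ back in, using Lemma~\ref{lem:disjoint-factor} to replace $\P(\Pi_1,\Pi_2,\cG_\Pi\mid\Pi)$ by $(1+o(1))\P(\Pi_1,\cG_\Pi\mid\Pi)\P(\Pi_2,\cG_\Pi\mid\Pi)$, and summing over the $\le n^{2\alpha}$ cross pairs yields the claimed bound $n^{-1+2\alpha+o(1)}\,\P(\Pi_1,\cG_\Pi\mid\Pi)\P(\Pi_2,\cG_\Pi\mid\Pi)$.

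I expect the main obstacle to be this last step: making rigorous both the decoupling of $(y_i-x_i)^+$ from $(z_j-x_j)^+$ and the weighted first-moment bound $\E[(y_i-x_i)^+\mid\Pi_1,\cG_\Pi,\Pi]=\widetilde{O}(n^{-1/2})$. Unlike Lemmas~\ref{lem:first-moment} and~\ref{lem:disjoint-factor}, which control probabilities, here one must re-run those integral estimates while carrying the extra weight $(y_i-x_i)^+$ and check that the error contributions from the $V_1\times V_2$ cross terms and from the replacement $\sum_a x_a\approx\sqrt n$ do not overwhelm the $n^{-1/2}$ gain. A secondary, more bookkeeping-heavy point is the structural reduction itself: one must verify that every blocking pair of $\Pi_{1,2}$ other than a cross pair is genuinely inherited from $\Pi$, $\Pi_1$, or $\Pi_2$, across all the in-block and one-endpoint-outside cases.
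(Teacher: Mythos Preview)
Your structural reduction to cross pairs $(i,j)\in V_1\times V_2$ is exactly the paper's first step, and your sharper characterisation $x_i<X_{i,j}<y_i$, $x_j<X_{j,i}<z_j$ is correct (the paper records only the weaker $X_{i,j}<y_i$, $X_{j,i}<z_j$). The difference is in how you cash in this constraint.

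The paper avoids your conditional-expectation computation entirely. Having conditioned on $\bx,\by,\bz$ and bounded by the cruder weight $y_a z_b$, it invokes Lemma~\ref{lem:no-big-y}: the contribution from $\cB=\{\exists\,j: y_j\vee z_j\ge 2\log^2 n/\sqrt n\}$ is already $e^{-\Omega(\log^2 n)}$ times the target, so one may freely intersect with $\cB^c$. On $\cB^c$ the weight is \emph{deterministically} at most $(2\log^2 n/\sqrt n)^2=n^{-1+o(1)}$, and one simply pulls this factor out and integrates the remainder exactly as in Lemma~\ref{lem:disjoint-factor}. No decoupling argument, no weighted first-moment estimate, no re-running of integrals is needed.

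Your route would also work, and in fact your weight $(y_i-x_i)^+(z_j-x_j)^+\le y_i z_j$ is pointwise smaller than the paper's, so the same truncation via Lemma~\ref{lem:no-big-y} would dispose of your ``main obstacle'' immediately. The extra precision you extract (that the blocking event forces $i\in B_1$, $j\in B_2$) is real but unnecessary for the bound, since the polylogarithmic loss from the crude truncation is absorbed into the $n^{o(1)}$. So both arguments are correct; the paper's is shorter precisely because it trades the sharper weight for a deterministic bound on the high-probability good event, sidestepping the conditional-expectation analysis you flagged as the hard part.
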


This is proven in Section \ref{subsec:combine}. It turns out that this failure probability is small enough for us to simply union bound over all potential pairs, at least when $\alpha$ is not too large. 
\begin{proof}[Proof of Lemma \ref{lem:combine-cycles-rare}]
    We will union bound over choices of $\Pi_{1,2}$, which may have at most $200 \log n$ cycles, each of which is length at most $n^\alpha$.  For each choice of $\Pi_{1,2}$ with $k$ cycles, we have at most $2^k$ choices for the pair $(\Pi_1,\Pi_2)$.    To choose $\Pi_{1,2}$ with exactly $k$ cycles, we may choose an ordered tuple $(\Pi^{(j)})$ so that each $\Pi^{(j)} \triangle \Pi$ is a cycle of length at most $n^\alpha$ and the cycles $(\Pi^{(j)} \triangle \Pi)$ are mutually disjoint, and introduce a factor of $1/k!$ to unorder the tuple.  The number of tuples with cycles of length $2\nu_1,\ldots,2\nu_k$ is at most  \begin{equation}
        \prod_{j = 1}^k \frac{n^{\nu_j}}{2 \nu_j} 
    \end{equation}
    by Fact \ref{fact:cycle-count}.  
    By a union bound and using Lemma \ref{lem:not-union}, Corollary \ref{cor:k-factor} and Lemma \ref{lem:first-moment} we have \begin{align*}
        \P(\cD_3\,|\,\Pi) &\leq \sum_{k = 2}^{200 \log n} \frac{1}{k!}\sum_{\nu_1,\ldots,\nu_k \in [2,n^\alpha]} \frac{n^{\sum \nu_j}}{2^k \prod_j \nu_j} \cdot 2^k \cdot n^{-1 + 2\alpha + o(1)} 2^k n^{- \sum \nu_j} \\
        &\leq n^{-1 + 2\alpha + o(1)} \sum_{k = 2}^{200 \log n} \frac{(2\alpha \log n)^k}{k!} \\
        &\leq n^{-1 + 4\alpha + o(1)} \qedhere. 
    \end{align*} 
\end{proof}

\section{A new conditional approach} \label{sec:lemmas}
In this section we dive into the technical work of proving our conditional probability estimates.  Among Pittel's main work from his 1993 paper \cite{P:93} is computing the second (unconditional) moment.  The main engine towards that is an asymptotic for the probability two matchings are stable simultaneously.  Our Lemma \ref{lem:first-moment} is a version of this theorem specialized to the case where the two matchings differ only on a set of size at most $n^{1/4}.$  In order to prove the rest of our bounds on quasirandomness events, we will need estimates that ultimately depend on the stability (or non-stability) of three or four matchings simultaneously.  We take a new approach to studying these probabilities by understanding the conditional measure directly. Through this, we show how our approach gives a conceptually different (and potentially simpler) proof of Pittel's result before proceeding to our other estimates.

\subsection{Setting things up}
We begin with an overview of known results and estimates that form the foundation for our computations.  As before, we   sample the preferences by generating an array $(X_{i,j})_{i \neq j}$ of i.i.d.\ uniform random variables in $[0,1]$ and  interpret $X_{i,a} < X_{i,b}$ to mean that $i$ prefers $a$ to $b$.   We begin with the following basic identity which appears in Pittel \cite{P:93} and is a variant on Knuth's formula in the bipartite case \cite{knuth19761mariages} (see \cite{K:97} for an English translation). 

\begin{lemma}\label{lem:single:exact}
    For a given matching $\Pi$, conditioned on $X_{i,\Pi(i)} = x_i$ we have \begin{equation*}
        \P(\Pi\,|\,X_{i,\Pi(i)} = x_i) = \prod_{(i,j) \notin \Pi} ( 1 - x_i x_j)\,.
    \end{equation*}
\end{lemma}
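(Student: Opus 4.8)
The plan is to unwind the definition of stability into a product of independent events on the underlying uniform variables. Recall that under the sampling scheme, $i$ prefers $a$ to $b$ precisely when $X_{i,a}<X_{i,b}$. So, writing $x_i = X_{i,\Pi(i)}$, the matching $\Pi$ is stable if and only if no non-edge $\{i,j\}\notin\Pi$ forms a blocking pair, i.e.\ it is never the case that both endpoints would rather be matched to each other: $X_{i,j}<x_i$ and $X_{j,i}<x_j$. Thus, conditioned on $X_{i,\Pi(i)}=x_i$ for every $i$, the event $\{\Pi\in\cS\}$ is exactly the intersection over non-edges $\{i,j\}\notin\Pi$ of the events $\neg\big(X_{i,j}<x_i \wedge X_{j,i}<x_j\big)$.

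Next I would verify the two independence facts that make the product factor. First, the conditioning event constrains only the coordinates $X_{i,\Pi(i)}$, indexed by ordered pairs $(i,\Pi(i))$ with $\{i,\Pi(i)\}\in\Pi$; since $\Pi$ is a matching, for any non-edge $\{i,j\}\notin\Pi$ neither $X_{i,j}$ nor $X_{j,i}$ is one of these coordinates, so conditionally the family $(X_{i,j})$ over both orientations of non-edges of $\Pi$ remains i.i.d.\ $\mathrm{Unif}([0,1])$. Second, for two distinct non-edges $\{i,j\}\neq\{k,\ell\}$ the coordinate pair $\{X_{i,j},X_{j,i}\}$ is disjoint from $\{X_{k,\ell},X_{\ell,k}\}$, so the blocking-pair events indexed by distinct non-edges are mutually independent given $(x_i)_i$.

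With these in hand the computation is immediate. For a fixed non-edge $\{i,j\}$, independence of the two coordinates and uniformity give $\P(X_{i,j}<x_i \wedge X_{j,i}<x_j \mid x_i,x_j) = x_i x_j \in[0,1]$, so the complementary (non-blocking) event has conditional probability $1-x_i x_j$. Multiplying over all non-edges using the mutual independence above yields
\[
\P(\Pi\,|\,X_{i,\Pi(i)}=x_i) \;=\; \prod_{(i,j)\notin\Pi}\big(1-x_i x_j\big)\,,
\]
as claimed.

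I do not foresee a genuine obstacle: the entire content is the bookkeeping that the conditioning touches only matched coordinates and that distinct non-edges involve disjoint coordinates. The only point meriting a word of care is that the product $\prod_{(i,j)\notin\Pi}$ runs over \emph{unordered} pairs, which is precisely what the symmetric description of a blocking pair produces.
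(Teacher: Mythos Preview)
Your proposal is correct and is essentially the same argument as the paper's: you write $\{\Pi\in\cS\}$ as the intersection over non-edges of the non-blocking events, observe these are conditionally independent given $(x_i)_i$ because they depend on disjoint coordinates not touched by the conditioning, and compute each factor as $1-x_ix_j$. The paper's proof is terser but follows the identical route.
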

\begin{proof}
    For each non-edge $(i,j) \notin \Pi$ we just have to check the event that $(i,j)$ do not both prefer each other to their matches.  In particular, we have to check \begin{equation}\label{eq:S-def}
       \{\Pi\} =  \bigwedge_{(i,j) \notin \Pi} \mathcal{S}_{i,j}(\Pi) \qquad \text{ where } \qquad \mathcal{S}_{i,j}(\Pi) = \{X_{i,\Pi(i)} < X_{i,j}\} \cup  \{X_{j,\Pi(j)} < X_{j,i}\}\,.
    \end{equation}
    Notice that conditioned on $\{X_{i,\Pi(i)}\}_{i \in [n]}$ the events $\{\mathcal{S}_{i,j}\}_{(i,j) \notin \Pi}$ are mutually independent with $\P(\mathcal{S}_{i,j}) = 1 - x_i x_j$, completing the proof.
\end{proof}

Integrating the above over choices of $x_i$ yields Lemma \ref{lem:Pi-stable-computation} as computed by Pittel \cite{P:93}. We evaluate the integral in slightly greater generality, which will be crucial to our conditional and two-point estimates. The intuition for the following lemma is that when $\Pi$ is stable, the variables $x_i$ behave roughly as $\mathrm{Exp}(\sqrt{n})$ random variables. Thus, the first moment terms $x_j$ each contribute $1/\sqrt{n}$ and the second moment terms each contribute $2/n$. 

\begin{lemma}\label{lem:integral-compute}
    Let $B_1$ and $B_2$ be disjoint subsets of $[n]$ with $|B_1|, |B_2| \leq n^{1/4}$.  Then \begin{equation*}
        \int_{\bx \in \cG_\Pi} \prod_{j \in B_1} x_j \prod_{j \in B_2} x_j^2 \prod_{(i,j) \notin \Pi} (1 - x_ix_j) \,d\bx = (1 \pm O(n^{-1/3}))\frac{e^{1/2}}{(n-1)!!} \cdot 2^{|B_2|} n^{-|B_1|/2 - |B_2|}\,.
    \end{equation*}
\end{lemma}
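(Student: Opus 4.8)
The plan is to reduce everything to a single clean Gaussian/Laplace-type computation on the quasirandomness event $\cG_\Pi$. First I would rewrite the product $\prod_{(i,j)\notin\Pi}(1-x_ix_j)$ as $\exp\bigl(\sum_{(i,j)\notin\Pi}\log(1-x_ix_j)\bigr)$ and Taylor-expand the logarithm. On $\cG_\Pi$ we have the uniform bound $x_i\le \log^2 n/\sqrt n$, so every term $x_ix_j = O(\log^4 n/n)$ is tiny and the expansion $\log(1-x_ix_j) = -x_ix_j - \tfrac12 x_i^2x_j^2 - O((x_ix_j)^3)$ is valid with a controllable tail. Summing over all non-edges, the linear term is $-\tfrac12\bigl((\sum_i x_i)^2 - \sum_i x_i^2\bigr) + \sum_{(i,j)\in\Pi} x_ix_j$ (using that $\sum_{i\ne j}x_ix_j = (\sum x_i)^2 - \sum x_i^2$ and subtracting the $\Pi$-edges), and the quadratic term is $-\tfrac14\bigl((\sum x_i^2)^2 - \sum x_i^4\bigr) + O(\cdots)$. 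On $\cG_\Pi$ each of these is pinned down: $\sum x_i = \sqrt n \pm 10\log n$, $\sum x_i^2 = 2\pm n^{-1/3}$, $\sum_{(i,j)\in\Pi}x_ix_j = \tfrac12 \pm n^{-1/3}$, and $\sum x_i^4 \le n\cdot (\log^2 n/\sqrt n)^4 = n^{-1}\log^8 n$, while the cubic tail is $\sum_{(i,j)}(x_ix_j)^3 \le (\sum x_i^2)^3 \cdot O(\max x_i^2) = O(n^{-1+o(1)})$. Hence on $\cG_\Pi$,
\[
\prod_{(i,j)\notin\Pi}(1-x_ix_j) = e^{1/2}\cdot\exp\Bigl(-\tfrac12\bigl(\textstyle\sum_i x_i\bigr)^2\Bigr)\cdot\bigl(1\pm \widetilde O(n^{-1/3})\bigr),
\]
where the $e^{1/2}$ comes from combining $+\tfrac12\sum x_i^2 + \sum_{\Pi}x_ix_j - \tfrac14(\sum x_i^2)^2 = \tfrac12 + \tfrac12 - \tfrac14\cdot 4 + o(1)$ — wait, this needs to be checked carefully; the point is that the second-order corrections conspire to give exactly the constant $e^{1/2}$ matching Lemma~\ref{lem:Pi-stable-computation}, and I would verify this bookkeeping against the $B_1=B_2=\emptyset$ case.

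Second, I would handle the prefactor $\prod_{j\in B_1}x_j\prod_{j\in B_2}x_j^2$. Since $|B_1|,|B_2|\le n^{1/4}$ and each $x_j \le \log^2 n/\sqrt n$ on $\cG_\Pi$, this whole prefactor is at most $(\log^2 n/\sqrt n)^{|B_1|+2|B_2|} = n^{-(|B_1|/2+|B_2|) + o(1)}$ pointwise. To get the sharp constant rather than just an $n^{o(1)}$-factor, the cleanest route is to reintroduce the full integral without the $\cG_\Pi$ truncation, use Lemma~\ref{lem:prob-G-UB} to argue the complement $\cG_\Pi^c$ contributes only $e^{-\Omega(\log^2 n)}$ (negligible against the target size $n^{-|B_1|/2-|B_2|}\ge n^{-n^{1/4}}$ — here one must be slightly careful, but the prefactor is also bounded on $\cG_\Pi^c$ so the truncation error is genuinely super-polynomially small relative to the main term), and then recognize the untruncated integral $\int \prod_{j\in B_1}x_j\prod_{j\in B_2}x_j^2\prod_{(i,j)\notin\Pi}(1-x_ix_j)\,d\bx$ as a known quantity. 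Alternatively — and this is probably the path the paper intends — one works directly on $\cG_\Pi$: after substituting the approximation from the first step, the integral becomes $e^{1/2}(1\pm\widetilde O(n^{-1/3}))\int_{\cG_\Pi}\prod_{B_1}x_j\prod_{B_2}x_j^2\,\exp(-\tfrac12(\sum x_i)^2)\,d\bx$, and now the heuristic "$x_i\sim\mathrm{Exp}(\sqrt n)$ i.i.d." is made rigorous: changing variables $x_i = t_i/\sqrt n$ turns $\exp(-\tfrac12(\sum x_i)^2)$ into a function of $\bar t := \tfrac1n\sum t_i$ concentrated near $1$, and the product measure structure lets one integrate out coordinates not in $B_1\cup B_2$ to leave $\int_0^\infty t\,e^{-t}\,dt = 1$ implicit normalizations; the coordinates in $B_1$ contribute $\E[\mathrm{Exp}(1)] = 1$ each (i.e.\ a factor $n^{-1/2}$ after undoing the scaling), and those in $B_2$ contribute $\E[\mathrm{Exp}(1)^2] = 2$ each (a factor $2n^{-1}$). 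This reproduces $2^{|B_2|}n^{-|B_1|/2-|B_2|}$.

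The main obstacle is the second step: making the "behaves like i.i.d.\ exponentials" heuristic precise while tracking the exact multiplicative constants $1$ and $2$, rather than just orders of magnitude, in the presence of the global constraint coming from $(\sum x_i)^2$ and the hard truncation to $\cG_\Pi$. The natural fix is to compare with the genuinely product reference measure $d\mu = \prod_i \sqrt n\,e^{-\sqrt n\,x_i}\,\one\{x_i\ge 0\}\,dx_i$: show that the quasirandomness event $\cG_\Pi$ has $\mu$-probability $1-o(1)$ (indeed $1-\widetilde O(n^{-1/3})$, since $\cG_\Pi$ is a conjunction of concentration events for $\sum x_i$, $\max x_i$, $\sum x_i^2$, $\sum_\Pi x_ix_j$ that all hold with that probability under $\mu$ by standard Bernstein/Chernoff bounds), and that $\exp(-\tfrac12(\sum x_i)^2) \,d\bx$ restricted to $\cG_\Pi$ is, up to $(1\pm\widetilde O(n^{-1/3}))$ and the normalization $\tfrac{e^{1/2}}{(n-1)!!}$, precisely $d\mu$ restricted to $\cG_\Pi$. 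Once the measure is identified as (essentially) $\mu$, the $B_1,B_2$ prefactor integrates to $\prod_{B_1}\E_\mu[x_j]\prod_{B_2}\E_\mu[x_j^2] = (n^{-1/2})^{|B_1|}(2n^{-1})^{|B_2|}$ exactly, modulo the negligible $\cG_\Pi$-truncation correction (controlled by Cauchy–Schwarz together with the $\mu$-probability of $\cG_\Pi^c$ and a crude $\mu$-moment bound on the prefactor), and the claimed asymptotic follows.
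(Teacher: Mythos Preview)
Your first step---Taylor-expanding $\log\prod_{(i,j)\notin\Pi}(1-x_ix_j)$ on $\cG_\Pi$ to reduce the integrand to $e^{1/2}e^{-s^2/2}(1\pm O(n^{-1/3}))$ with $s=\sum_i x_i$---is exactly what the paper does and is correct (the bookkeeping you flagged comes out to $\tfrac12\cdot 2 + \tfrac12 - \tfrac14\cdot 2^2 = \tfrac12$; your intermediate arithmetic has a slip but you correctly anticipated $e^{1/2}$).

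The second step has a genuine gap. Your claim that $e^{-s^2/2}\,d\bx$ restricted to $\cG_\Pi$ equals, up to $(1\pm\widetilde O(n^{-1/3}))$ and a normalizing constant, the product exponential measure $d\mu=\prod_i\sqrt n\,e^{-\sqrt n\,x_i}\,dx_i$ is false pointwise. Completing the square,
\[
e^{-s^2/2} \;=\; e^{n/2}\,e^{-\sqrt n\,s}\,e^{-(s-\sqrt n)^2/2},
\]
so the density ratio is a constant times $e^{-(s-\sqrt n)^2/2}$. On $\cG_\Pi$ the deviation $|s-\sqrt n|$ ranges over all of $[0,10\log n]$, and this factor ranges from $1$ down to $e^{-50\log^2 n}$; it is nowhere near constant. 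What \emph{is} true is that, integrated against a test function depending on only $O(n^{1/4})$ coordinates, the factor $e^{-(s-\sqrt n)^2/2}$ approximately decouples (under $\mu$, $s-\sqrt n$ is nearly $N(0,1)$ and nearly independent of those few coordinates) and contributes a fixed constant $\approx 1/\sqrt 2$ that could be absorbed into the normalization---but establishing that decoupling to accuracy $O(n^{-1/3})$ is real work you have not done. The paper sidesteps the issue entirely with the simplex change of variables $x_i = s\,\widehat Y_i$, where $(\widehat Y_i)=(Y_i/\sum_\ell Y_\ell)$ for i.i.d.\ standard exponentials: the integral then factorizes \emph{exactly} as
\[
\E\Bigl[\,\textstyle\prod_{B_1}\widehat Y_j\prod_{B_2}\widehat Y_j^2\,\one_{\cG'}\Bigr]\;\cdot\;\int\frac{s^{\,n+|B_1|+2|B_2|-1}}{(n-1)!}\,e^{-s^2/2}\,ds,
\]
and the $s$-integral is computed as a double factorial, which is precisely what absorbs the $e^{-(s-\sqrt n)^2/2}$ fluctuation.

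A secondary issue: your proposed control of the $\cG_\Pi$-truncation via Cauchy--Schwarz yields a relative error of order $\mu(\cG_\Pi^c)^{1/2}\cdot C^{|B_1|+|B_2|}$ (the constant $C>1$ coming from fourth-to-second moment ratios of an exponential), which is not small when $|B_1|+|B_2|$ is of order $n^{1/4}$ as the lemma allows. The paper's simplex route also handles this more gracefully, since on $\cG'$ each $\widehat Y_j$ is already bounded.
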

Along with the proof of Lemma \ref{lem:prob-G-UB}, this is the only place that we require a direct computation of an integral similar to \cite{P:93}, so for the sake of isolating the ideas from our approach we provide the proof in Appendix \ref{app:proof-G}.

\subsection{Conditional first moment}\label{subsec:first-moment}
In order to compute the probability of a matching being stable conditioned on $\Pi$, we will need to handle the event that multiple matchings are simultaneously stable. We have the following identity yielding an integral representation for the pair probability which also appears in Pittel \cite{P:93}.
\begin{lemma}\label{lem:two-point}
    Let $\Pi_1$ and $\Pi$ be matchings and condition on $X_{i,\Pi(i)} = x_i$ and $X_{i,\Pi_1(i)} = y_i$.  Then $$\P(\Pi, \Pi_1 \,|\, X_{i,\Pi(i)} = x_i, X_{i,\Pi_1(i)} = y_i) = \prod_{ij \notin (\Pi \cup \Pi_1)}(1 - x_ix_j - y_iy_j + (x_i \wedge y_i)(x_j \wedge y_j)) \one_{\bx,\by \in E}$$
    where $E$ is the set with $x_i = y_i$ if $\Pi(i) = \Pi_1(i)$ and for every cycle $\{i_1,\ldots,i_\ell\}$ formed by the symmetric difference $\Pi \triangle \Pi_1$ we have alternating inequalities \begin{equation}\label{eq:orientation-choice}
        x_{i_1} > y_{i_1}, x_{i_2} < y_{i_2},\ldots, x_{i_\ell} < y_{i_\ell} \qquad \text{ or } \qquad x_{i_1} < y_{i_1}, x_{i_2} > y_{i_2},\ldots, x_{i_\ell} > y_{i_\ell}\,.
    \end{equation}
\end{lemma}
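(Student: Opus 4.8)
The plan is to follow the proof of Lemma~\ref{lem:single:exact}, now simultaneously tracking the stability constraints for both $\Pi$ and $\Pi_1$. Applying \eqref{eq:S-def} to each matching, the event $\{\Pi,\Pi_1\}$ is $\bigwedge_{(i,j)\notin\Pi}\mathcal{S}_{i,j}(\Pi)\wedge\bigwedge_{(i,j)\notin\Pi_1}\mathcal{S}_{i,j}(\Pi_1)$. I would sort the unordered pairs $\{i,j\}$ with $i\neq j$ into four classes: those in $\Pi\cap\Pi_1$, those in $\Pi\setminus\Pi_1$, those in $\Pi_1\setminus\Pi$, and those in neither. A pair in $\Pi\cap\Pi_1$ imposes no constraint. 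For a pair $(i,j)\in\Pi\setminus\Pi_1$ only $\mathcal{S}_{i,j}(\Pi_1)$ is required, and since $(i,j)\in\Pi$ forces $X_{i,j}=x_i$ and $X_{j,i}=x_j$, this reads $\{y_i<x_i\}\cup\{y_j<x_j\}$, a \emph{deterministic} function of the conditioned values; symmetrically a pair in $\Pi_1\setminus\Pi$ yields the deterministic constraint $\{x_i<y_i\}\cup\{x_j<y_j\}$. For a pair $(i,j)$ in neither matching, both $X_{i,j}$ and $X_{j,i}$ are unconditioned, and we must enforce $\mathcal{S}_{i,j}(\Pi)\cap\mathcal{S}_{i,j}(\Pi_1)$.

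The crux is to identify the conjunction of the deterministic constraints---those attached to pairs of $\Pi\triangle\Pi_1$---with the event $\{\bx,\by\in E\}$. On a fixed point $i$ of $\Pi\triangle\Pi_1$ the variables $X_{i,\Pi(i)}$ and $X_{i,\Pi_1(i)}$ coincide, so $x_i=y_i$, which is the first clause of $E$. On a cycle $i_1,\dots,i_\ell$ of $\Pi\triangle\Pi_1$---necessarily of even length, with edges alternating between $\Pi$ and $\Pi_1$---I would set $b_i=\one\{x_i<y_i\}$ and rewrite the constraints: each $\Pi$-edge of the cycle asks that $b$ not equal $1$ at both of its endpoints, while each $\Pi_1$-edge asks that $b$ equal $1$ at some endpoint. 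Propagating these alternating clauses around the cycle forces $b$ to strictly alternate along $i_1,\dots,i_\ell$; the even length is exactly what makes the wrap-around consistent, and leaves precisely the two alternating assignments, which are the two sign patterns \eqref{eq:orientation-choice} defining $E$.

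It remains to compute the product over pairs $(i,j)\notin\Pi\cup\Pi_1$. Conditioned on all the entries $X_{\cdot,\Pi(\cdot)}$ and $X_{\cdot,\Pi_1(\cdot)}$, the remaining entries are i.i.d.\ uniform; the pairs $\{X_{i,j},X_{j,i}\}$ attached to distinct such non-edges are disjoint and genuinely unconditioned (since $j\notin\{\Pi(i),\Pi_1(i)\}$ and $i\notin\{\Pi(j),\Pi_1(j)\}$), so the associated events are mutually independent. For a single such pair, writing $u=X_{i,j}$ and $v=X_{j,i}$, the event $\mathcal{S}_{i,j}(\Pi)$ fails exactly on the rectangle $[0,x_i]\times[0,x_j]$ and $\mathcal{S}_{i,j}(\Pi_1)$ fails on $[0,y_i]\times[0,y_j]$, whose intersection is $[0,x_i\wedge y_i]\times[0,x_j\wedge y_j]$; inclusion--exclusion gives probability $1-x_ix_j-y_iy_j+(x_i\wedge y_i)(x_j\wedge y_j)$ that both hold. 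Multiplying over all such non-edges and inserting the factor $\one_{\bx,\by\in E}$ from the preceding paragraph gives the claimed identity.

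I expect the main obstacle to be the second paragraph: verifying carefully that the local deterministic constraints glue to exactly the alternating condition $E$ (that no extra solutions arise, and that the evenness of each cycle is what closes up the two alternating patterns), together with the bookkeeping of which array entries remain free after conditioning, since the independence used in the last paragraph rests on that.
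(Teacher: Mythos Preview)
Your proposal is correct and follows essentially the same approach as the paper's proof: the paper likewise splits the stability event into the constraints coming from $\Pi\setminus\Pi_1$, $\Pi_1\setminus\Pi$, and pairs outside $\Pi\cup\Pi_1$, observes that the first two classes yield deterministic constraints on $(\bx,\by)$ that propagate around each cycle to force one of the two alternating sign patterns, and then handles the remaining pairs by conditional independence and inclusion--exclusion. Your use of the boolean $b_i=\one\{x_i<y_i\}$ to encode and propagate the cycle constraints is a slightly cleaner packaging of the paper's ``only one choice for which inequality holds'' step, but the arguments are the same.
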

\begin{proof}
    Note that first we must have $x_i = y_i$ if $\Pi(i) = \Pi_1(i)$.  By \eqref{eq:S-def} we have 
    \begin{align*}
        \{\Pi,\Pi_1 \} &= \bigwedge_{(i,j) \notin \Pi} \cS_{i,j}(\Pi) \cap \bigwedge_{(i,j) \notin \Pi_1} \cS_{i,j}(\Pi_1) \\
        &= \bigwedge_{(i,j) \in (\Pi \setminus \Pi_1)} \cS_{i,j}(\Pi_1)   \cap   \bigwedge_{(i,j) \in (\Pi_1 \setminus \Pi)} \cS_{i,j}(\Pi) \cap \bigwedge_{(i,j) \notin (\Pi_1 \cup \Pi)}  \cS_{i,j}(\Pi) \cap \cS_{i,j}(\Pi_{1})\,.
    \end{align*}
    Note that for each pair $(i,j) \in \Pi \setminus \Pi_1$ we have \begin{equation} \label{eq:S-cycle-cases}
        \cS_{i,j}(\Pi_1) = \{X_{i,\Pi_1(i)} < X_{i,j}\} \cup \{X_{j,\Pi_1(j)} < X_{j,i}\} = \{y_i < x_i\} \cup \{y_j < x_j\}\,.
    \end{equation}
    Suppose that we have a cycle $i_1,\ldots,i_\ell$ formed by $\Pi \triangle \Pi_1$ and assume without loss of generality that $(i_1,i_2) \in \Pi \setminus \Pi_1$.  
    If we assume that the first inequality in \eqref{eq:S-cycle-cases} occurs for $i = i_1$ and $j = i_2$, we only have one choice for which inequality holds for $\cS_{\Pi}(i_2,i_3), \cS_{\Pi_1}(i_3,i_4)$ and so on.  This shows that one must have $\bx,\by \in E$.  To handle the pairs $(i,j) \notin (\Pi_1 \cup \Pi)$, note that conditioned on $\{X_{i,\Pi(i)},X_{i,\Pi_1(i)}\}_{i \in [n]}$, the events $\{\cS_{i,j}(\Pi) \cap \cS_{i,j}(\Pi_1)\}_{(i,j) \notin (\Pi \cup \Pi_1)}$ are mutually independent.  By the principle of inclusion-exclusion we have \begin{equation*}\P(\cS_{i,j}(\Pi) \cap \cS_{i,j}(\Pi_1)) = 1 - x_ix_j - y_iy_j + (x_i \wedge y_i) (x_j \wedge y_j)\,. \qedhere \end{equation*} 
\end{proof} 

We will often be working in the setting where we have a matching $\Pi$ whose stability we are conditioning on.  With this in mind, it will be useful to have notation for the choice in \eqref{eq:orientation-choice}.  We think of this as an orientation of the cycles in $\Pi_1 \triangle \Pi$, and may be defined by fixing the sets $A = \{i : X_{i,\Pi_1(i)} < X_{i,\Pi(i)}\}$ and $B = \{i : X_{i,\Pi_1(i)} > X_{i,\Pi(i)}\}.$ When considering the event $\Pi_1$ and working conditioned on $\Pi$, we will write $\vec{\Pi}_1$ to indicate that we have fixed choices for the sets $A$ and $B$. 

We will use the law of total expectation, and so the main work for proving Lemma~\ref{lem:first-moment} is to identify the probability of a given orientation $\vec{\Pi}_1$ holding conditioned on $\Pi$ and $\bx$.  As a key first step, we first show that the contribution from having some $j$ with $X_{j,\Pi_1(j)} \geq 2 \log^2 n / \sqrt{n}$ is negligible.  We will later need a similar statement for when we seek stability of two matchings $\Pi_1$ and $\Pi_2$ simultaneously with $\Pi$ and so we prove our lemma in this setting.  

\begin{lemma}\label{lem:no-big-y}
    Let $\Pi_1$ and $\Pi_2$ be matchings with $|\Pi \triangle \Pi_1| \leq n^\alpha$ and $|\Pi \triangle \Pi_2| \leq n^\alpha$ for $\alpha < 1/2$ so that $\Pi \triangle \Pi_1$ and $\Pi \triangle \Pi_2$ are disjoint.  Let $\cB = \{\exists~j \in [n], k \in [2]: X_{j,\Pi_k(j)} \geq 2 \log^2 n /\sqrt{n}\}$.  Then for $\bx \in \cG_\Pi$ we have $$\P(\vec{\Pi}_1,\vec{\Pi}_2, \cB \,|\,\Pi,\bx) \leq e^{-\Omega(\log^2 n)} \prod_{i \in A_1 \cup A_2} \frac{x_i}{\sqrt{n}}$$
    where here $A_{k} = \{j \in [n] : X_{j,\Pi_k(j)} < X_{j,\Pi(j)}\}\,.$
\end{lemma}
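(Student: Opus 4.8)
The plan is to condition on $\bx\in\cG_\Pi$ and represent the conditional probability as an integral over the fresh coordinates $y_i:=X_{i,\Pi_k(i)}$, where $i$ ranges over the vertex set $V_k$ of $\Pi\triangle\Pi_k$ (recall $V_1\cap V_2=\emptyset$ and $|V_k|\le n^\alpha$). Arguing exactly as in Lemmas~\ref{lem:single:exact} and~\ref{lem:two-point}, further conditioning on $\by=(y_i)_{i\in V_1\cup V_2}$ renders the events $\mathcal{S}_{i,j}(\Pi)\cap\mathcal{S}_{i,j}(\Pi_1)\cap\mathcal{S}_{i,j}(\Pi_2)$ independent across pairs $(i,j)\notin\Pi\cup\Pi_1\cup\Pi_2$, while every other relevant $\mathcal{S}_{i,j}$ constraint is $\by$-measurable and is automatically satisfied once the orientations are fixed. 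Dividing by $\P(\Pi\mid\bx)=\prod_{(i,j)\notin\Pi}(1-x_ix_j)$ yields
\[
    \P(\vec\Pi_1,\vec\Pi_2,\cB\mid\Pi,\bx)=\int \one\{\text{orientation}\}\,\one_{\cB}\Bigg(\prod_{(i,j)\notin\Pi\cup\Pi_1\cup\Pi_2}\frac{q_{ij}}{1-x_ix_j}\Bigg)\Bigg(\prod_{(i,j)\in(\Pi_1\cup\Pi_2)\setminus\Pi}\frac{1}{1-x_ix_j}\Bigg)\,d\by,
\]
where $q_{ij}:=\P(\mathcal{S}_{i,j}(\Pi)\cap\mathcal{S}_{i,j}(\Pi_1)\cap\mathcal{S}_{i,j}(\Pi_2)\mid\bx,\by)\le 1$. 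On $\cB$ some coordinate satisfies $y_{j_0}\ge 2\log^2 n/\sqrt n$; since $x_i\le\log^2n/\sqrt n$ for all $i$ on $\cG_\Pi$, such a $j_0$ must lie in $B_1\cup B_2$ (if the prescribed orientation places $j_0$ in $A_1\cup A_2$ then the integrand already vanishes). I then union bound over the at most $|V_1|+|V_2|\le 2n^\alpha$ choices of $j_0$; by symmetry assume $j_0\in B_1$.

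The next step is to bound the integrand by a product over the coordinates $y_i$. Let $U:=(V_1\cup V_2)^c$. A pair $(i,j)$ with $i,j\in U$ contributes $q_{ij}/(1-x_ix_j)=1$; a pair with $i\in A_k$, $j\in U$ also contributes $1$ (since $y_i<x_i$ makes $\mathcal{S}_{i,j}(\Pi_k)$ redundant given $\mathcal{S}_{i,j}(\Pi)$); a pair with $i\in B_k$, $j\in U$ contributes $\frac{1-x_jy_i}{1-x_jx_i}\le e^{-x_j(y_i-x_i)}$; and the remaining $O(n^{2\alpha})$ pairs — those with both endpoints in $V_1\cup V_2$, together with the edges of $(\Pi_1\cup\Pi_2)\setminus\Pi$ — contribute, using only $q_{ij}\le 1$ and $x_ix_j\le\log^4n/n$, an aggregate $\by$-independent factor $(1-\log^4n/n)^{-O(n^{2\alpha})}=e^{o(1)}$. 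Consequently, on the orientation event, the integrand is at most
\[
    e^{o(1)}\prod_{i\in A_1\cup A_2}\one\{y_i<x_i\}\prod_{i\in B_1\cup B_2}e^{-(y_i-x_i)S}\one\{y_i>x_i\}\cdot\one\{y_{j_0}\ge 2\log^2n/\sqrt n\},
\]
where $S:=\sum_{l\in U}x_l\ge\sqrt n-10\log n-2n^{\alpha-1/2}\log^2n=\sqrt n\,(1-o(1))$ on $\cG_\Pi$.

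This integral factorizes over coordinates: each $i\in A_1\cup A_2$ contributes $\int_0^{x_i}dy_i=x_i$; each $i\in(B_1\cup B_2)\setminus\{j_0\}$ contributes $\int_{x_i}^1 e^{-(y_i-x_i)S}\,dy_i\le S^{-1}\le(1+o(1))/\sqrt n$; and $j_0$ contributes $\int_{2\log^2n/\sqrt n}^1 e^{-(y_{j_0}-x_{j_0})S}\,dy_{j_0}\le S^{-1}e^{-(2\log^2n/\sqrt n-x_{j_0})S}\le\frac{1+o(1)}{\sqrt n}\,e^{-\Omega(\log^2n)}$, where I used $2\log^2n/\sqrt n-x_{j_0}\ge\log^2n/\sqrt n$ and $S\ge\sqrt n(1-o(1))$. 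Since the orientation alternates around each cycle of $\Pi\triangle\Pi_k$ we have $|A_1\cup A_2|=|B_1\cup B_2|$, so multiplying these one-dimensional bounds gives, for each fixed $j_0$, a bound $e^{-\Omega(\log^2n)}\prod_{i\in A_1\cup A_2}\frac{x_i}{\sqrt n}$; summing over the $\le 2n^\alpha$ values of $j_0$ costs a subpolynomial factor absorbed into $e^{-\Omega(\log^2n)}$, which proves the lemma.

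The main obstacle is the bookkeeping in the middle step. The ``cross'' terms $q_{ij}/(1-x_ix_j)$ with both endpoints inside $V_1\cup V_2$, and the denominator factors coming from the edges of $\Pi_1$ and $\Pi_2$, all depend genuinely on the fresh variables $\by$, so the point is to discard them in favour of a single $\by$-independent factor so that the integral factorizes over coordinates. This is harmless precisely because there are only $\Theta(n^{2\alpha})$ of them and each lies within $1\pm O(\log^4n/n)$ of $1$ — which is exactly where the hypothesis $\alpha<1/2$ enters — after which everything reduces to elementary exponential integrals.
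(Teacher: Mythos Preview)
Your proof is correct and follows essentially the same route as the paper: both reduce to an integral over the fresh variables $\by$, discard the $V\times V$ terms (you bound them by $e^{o(1)}$ using $q_{ij}\le 1$ and $\alpha<1/2$, while the paper simply drops them via the containment \eqref{eq:event-triple-UB}), union bound over the bad index $j_0\in B_1\cup B_2$, and integrate coordinatewise using the exponential bound $\frac{1-y_ix_j}{1-x_ix_j}\le e^{-(y_i-x_i)x_j}$ together with $\sum_{j\in U}x_j=\sqrt n(1-o(1))$ on $\cG_\Pi$. The only difference is cosmetic: you write out the exact conditional integrand first and then bound it, whereas the paper passes to the upper bound immediately.
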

\begin{proof}
    Set $V = A_1 \cup A_2 \cup B_1 \cup B_2$ where $B_k = \{j \in [n]: X_{j, \Pi_k(j)} > X_{j, \Pi(j)}\}$. Define the vectors $\by$ and $\bz$ via $y_j = X_{j,\Pi_1(j)}$ for $j \in A_1 \cup B_1$ and $z_j = X_{j,\Pi_2(j)}$ for $j \in A_2 \cup B_2$. 
    Set $\vec{E}$ to be the set of $\bx,\by,\bz$ which corresponds to orientations $\vec{\Pi}_1,\vec{\Pi}_2$ and note
        \begin{align}
        \{\vec{\Pi}_1,\vec{\Pi}_2 \} &\subseteq \left\{\vec{E} \wedge \bigwedge_{(i,j) \in V \times V^c} \cS_{i,j}(\Pi_1) \cap \cS_{i,j}(\Pi_2) \right\} \label{eq:event-triple-UB}
    \end{align}
    which we note holds for arbitrary $\vec{\Pi}_1$ and $\vec{\Pi}_2$.  Conditional on $\bx,\by,\bz$, the events are mutually independent across pairs $(i,j) \in V \times V^c$ and so on $\vec{E}$ and $\cG_\Pi$ we may bound \begin{align*}
        \P(\vec{\Pi}_1,\vec{\Pi}_2, \cB \,|\,\Pi,\bx,\by,\bz) \leq \int_{[0,1]^V \cap \cB} \prod_{(i,j) \in V \times V^c} \frac{1 - x_ix_j - y_iy_j + (x_i \wedge y_i) (x_j \wedge y_j)}{1 - x_i x_j}\,d\by
    \end{align*}
    where in the integral we relabeled $(z_j)_{j \in A_2 \cup B_2}$ by $(y_j)_{j \in A_2 \cup B_2}$ using the assumption that $\Pi \triangle \Pi_1$ and $\Pi \triangle \Pi_2$ are disjoint.  Setting $A = A_1 \cup A_2$ and $B = B_1 \cup B_2$ we note that
        \begin{align*}
        \int_{[0,1]^V \cap \cB} \prod_{ij \in V \times V^c} \frac{1 - x_ix_j - y_iy_j + (x_i \wedge y_i) (x_j \wedge y_j)}{1 - x_i x_j}\,d\by &= \int_{[0,1]^V \cap \cB} \prod_{i \in A}\one_{y_i < x_i} \prod_{ij \in B \times V^c} \frac{1 - y_ix_j}{1 - x_i x_j}\,d\by \\
        &\leq \prod_{i \in A} x_i \int_{[0,1]^B}\one_{\cB}\prod_{(i,j) \in B \times V^c}\frac{1 - y_ix_j}{1 - x_i x_j}\,d\by\,.
    \end{align*}
    We now bound the latter integral by replacing $\one_{\cB} \leq \sum_{b \in B} \one\{y_b \geq 2 \log^2 n /\sqrt{n}\} =: \sum_{b \in B}\one_{\cB_b}$ and noting that for a fixed $b \in B$ we have \begin{align*}
        \int_{[0,1]^B}\one_{\cB_b}&\prod_{(i,j) \in B \times V^c}\frac{1 - y_ix_j}{1 - x_i x_j}\,d\by \\
        &\leq (1 + \widetilde{O}(n^{-1 + \alpha})) \int_{[0,1]^B} \one_{\cB_b} \exp\left(- \sum_{(i,j) \in B \times V^c} (y_i - x_i)x_j \right)\,d\by \\
        &=(1 + \widetilde{O}(n^{-1 + \alpha})) \int_{[0,1]^B} \one_{\cB_b} \exp\left(- \sum_{i \in B} (y_i - x_i) \sqrt{n}(1 + \widetilde{O}(n^{-1/2})) \right)\,d\by \\
        &\leq e^{-\Omega(\log^2 n)} n^{-|B|/2}\,.
    \end{align*}
    Summing over all $b \in B$ and noting that $|B| = |A|$ completes the proof. 
\end{proof}

We are now ready to understand the conditional probability  $\P(\vec{\Pi}_1 \,|\,\Pi,\bx).$ 

\begin{lemma}\label{lem:two-point-conditional}
        Let $\Pi_1$ be a matching with $|\Pi \triangle \Pi_1| \leq n^\alpha$ for $\alpha < 1/2$. For $\bx \in \cG_\Pi$ we have $$\P(\vec{\Pi}_1 \,|\,\Pi,\bx) = (1 \pm \widetilde{O}(n^{-1 + 2\alpha})) \prod_{i \in A} \frac{x_i}{\sqrt{n}} \,.$$
\end{lemma}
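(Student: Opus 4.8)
The plan is to turn $\P(\vec\Pi_1\mid\Pi,\bx)$ into an integral over the coordinates that remain free once we condition on $\Pi$ and on $\bx=(x_i)_i$, and then evaluate that integral using the quasirandomness of $\bx\in\cG_\Pi$. Write $V$ for the vertex set of $\Pi\triangle\Pi_1$, set $y_i=X_{i,\Pi_1(i)}$, and recall that fixing the orientation $\vec\Pi_1$ amounts to fixing $A=\{i\in V:y_i<x_i\}$ and $B=\{i\in V:y_i>x_i\}$. Conditioning on $\{X_{i,\Pi(i)}=x_i\}$, Lemma~\ref{lem:two-point} expresses $\P(\vec\Pi_1,\Pi\mid\bx)$ as the integral over $\by=(y_i)_{i\in V}$ of $\prod_{ij\notin\Pi\cup\Pi_1}(1-x_ix_j-y_iy_j+(x_i\wedge y_i)(x_j\wedge y_j))$ restricted to the region cut out by the chosen orientation, while Lemma~\ref{lem:single:exact} gives $\P(\Pi\mid\bx)=\prod_{ij\notin\Pi}(1-x_ix_j)$. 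Dividing, every pair $ij$ with both endpoints in $V^c$ cancels (there $y_i=x_i$), leaving a product over pairs meeting $V$ divided by the at most $n^\alpha$ factors $1-x_ix_j$ with $ij\in\Pi_1\setminus\Pi$. For $i\in A$, $j\in V^c$ the factor collapses to $1$ and contributes only the constraint $\one_{y_i<x_i}$; for $i\in B$, $j\in V^c$ it becomes $\tfrac{1-y_ix_j}{1-x_ix_j}$; and for $i,j\in V$ it is $1+O(x_ix_j+y_iy_j)$.

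Next I would discard the atypical event $\cB$ of Lemma~\ref{lem:no-big-y}: on $\cB$ the contribution to the integral is $e^{-\Omega(\log^2 n)}\prod_{i\in A}x_i/\sqrt n$, which is absorbed into the error term. On $\cB^c$ every relevant $y_i$ is at most $2\log^2 n/\sqrt n$, so — using also $x_i\le\log^2 n/\sqrt n$ from $\cG_\Pi$ — each of the at most $|V|^2\le n^{2\alpha}$ factors with both endpoints in $V$, and each of the at most $n^\alpha$ denominator factors, lies within $1+O(\log^4 n/n)$ of $1$, together contributing a global factor $1\pm\widetilde O(n^{-1+2\alpha})$. This reduces the problem to evaluating $\int_{\cB^c}\prod_{i\in A}\one_{y_i<x_i}\prod_{i\in B}\one_{y_i>x_i}\prod_{(i,j)\in B\times V^c}\tfrac{1-y_ix_j}{1-x_ix_j}\,d\by$. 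Since $\cB^c$ is a product event and each $y_i$ ($i\in V$) appears in a single block of factors, this integral factorizes over $i\in V$.

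Each $i\in A$ then contributes $\int_0^{x_i}dy_i=x_i$. For each $i\in B$ I would expand $\log\tfrac{1-y_ix_j}{1-x_ix_j}=-(y_i-x_i)x_j+O(y_i^2x_j^2)$ and sum over $j\in V^c$, using that on $\cG_\Pi$ one has $\sum_j x_j=\sqrt n(1\pm\widetilde O(n^{-1/2}))$, $\sum_{j\in V}x_j=\widetilde O(n^{\alpha-1/2})$, and $\sum_j x_j^2=2+O(n^{-1/3})$, to get $\prod_{j\in V^c}\tfrac{1-y_ix_j}{1-x_ix_j}=\exp(-(y_i-x_i)\sqrt n)(1\pm(\text{small}))$ uniformly for $y_i\le 2\log^2 n/\sqrt n$; its integral over $y_i\in[x_i,2\log^2 n/\sqrt n]$ is $\tfrac1{\sqrt n}(1\pm(\text{small}))$, the part beyond that range costing only $e^{-\Omega(\log^2 n)}$. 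Multiplying the $|A|$ factors $x_i$ by the $|B|$ factors $1/\sqrt n$ and using that the orientation \eqref{eq:orientation-choice} alternates around each cycle, so that $|A|=|B|=|\Pi\setminus\Pi_1|$, yields $\prod_{i\in A}x_i\cdot n^{-|A|/2}=\prod_{i\in A}\tfrac{x_i}{\sqrt n}$ up to the accumulated multiplicative error.

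The main obstacle is the error bookkeeping: one must verify that the corrections from the $\le n^{2\alpha}$ pairs with both endpoints in $V$, from truncating the logarithmic expansion of $\tfrac{1-y_ix_j}{1-x_ix_j}$ (summed over all $j$ and over the $\le n^\alpha$ vertices $i\in B$), and — the most delicate point — from the $|B|$ exponential integrals where one replaces $\sum_{j\in V^c}x_j$ by $\sqrt n$, all combine to $\widetilde O(n^{-1+2\alpha})$; the common factor $\sum_j x_j$ appearing in every block must be tracked carefully rather than bounded naively, and this is exactly where the hypothesis $\alpha<1/2$ (which keeps $|V|$ small relative to $\sqrt n$) is used. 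By comparison, the remaining steps — the cancellation of the $V^c\times V^c$ factors, the simplification of the $A$- and $B$-type factors against the denominator, the factorization of the integral over $\cB^c$, and the treatment of $\cB$ via Lemma~\ref{lem:no-big-y} — are routine.
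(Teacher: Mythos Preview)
Your proposal is correct and follows essentially the same route as the paper: divide the two-point integrand of Lemma~\ref{lem:two-point} by $\prod_{ij\notin\Pi}(1-x_ix_j)$, cancel the $V^c\times V^c$ factors, invoke Lemma~\ref{lem:no-big-y} to restrict to $\cB^c$, absorb the $V\times V$ and $\Pi_1\setminus\Pi$ factors into a $(1\pm\widetilde O(n^{-1+2\alpha}))$ term, and then integrate $\prod_{i\in A}\one_{y_i<x_i}\prod_{(i,j)\in B\times V^c}\tfrac{1-y_ix_j}{1-x_ix_j}$ by passing to an exponential via the concentration of $\sum_{j\in V^c}x_j$ on $\cG_\Pi$. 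Your explicit mention of the factorization over $i\in V$ and your caution about tracking the common shift $\sum_j x_j$ through all $|B|$ blocks are exactly the points the paper handles (somewhat implicitly) in its final display.
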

\begin{proof}
    Set $V = A \cup B$ and note that by Lemma \ref{lem:two-point} we have \begin{align*}
        \P(\vec{\Pi}_1\,|\,\Pi, \bx) &= \prod_{ij \not\in \Pi} (1-x_ix_j)^{-1} \int_{[0,1]^{V}} \prod_{(i,j) \not\in \Pi \cup \Pi_1} (1-x_ix_j-y_iy_j+(x_i\land y_i)(x_j \land y_j)) \, d\by \\
        &= (1 \pm \widetilde{O}(n^{-1 + \alpha})) \int_{[0,1]^{V}} \prod_{(i,j) \not\in \Pi \cup \Pi_1} \left(\frac{1-x_ix_j-y_iy_j+(x_i\land y_i)(x_j \land y_j)}{1 - x_ix_j}\right) \, d\by
    \end{align*}
    where in the second line we used that $|\Pi \triangle \Pi_1| \leq n^\alpha$ and $|x_i x_j| \leq \widetilde{O}(n^{-1})$ on $\cG_{\Pi}$.  By Lemma \ref{lem:no-big-y} the contribution from the set $\cB = \cup_{j \in B}\{y_j \geq \frac{2\log^2 n}{\sqrt{n}}\}$ is negligible. This shows that we may intersect with $\cB^c$; on $\cB^c$ we have  $\left(\frac{1-x_ix_j-y_iy_j+(x_i\land y_i)(x_j \land y_j)}{1 - x_ix_j}\right) = 1 - \widetilde{O}(n^{-1})$ for all pairs $ij$.  As such, we have \begin{align*}
        \prod_{(i,j) \not\in \Pi \cup \Pi_1}& \left(\frac{1-x_ix_j-y_iy_j+(x_i\land y_i)(x_j \land y_j)}{1 - x_ix_j}\right) \\
        &= (1 \pm \widetilde{O}(n^{-1 + 2\alpha}))\prod_{(i,j) \in V \times V^c} \left(\frac{1-x_ix_j-y_iy_j+(x_i\land y_i)(x_j \land y_j)}{1 - x_ix_j}\right) \\
        &= (1 \pm \widetilde{O}(n^{-1 + 2\alpha}))\prod_{(i,j) \in B \times V^c}\left(\frac{1 - y_i x_j}{1 - x_ix_j}\right)
    \end{align*}
    where the second equality is by noting that for $i \in A$ the term in the product is $1$.  On the event $\cB^c$ and $\cG_\Pi$ we have $$\prod_{(i,j) \in B \times V^c}\left(\frac{1 - y_i x_j}{1 - x_ix_j}\right) = (1 \pm \widetilde{O}(n^{-1 + \alpha})) \exp\left(-\sum_{(i,j) \in B \times V^c} (y_i - x_i) x_j \right)\,.$$
    Using that on $\cG_\Pi$ we have $\sum_{j \in V^c} x_j = \sqrt{n}(1 \pm \widetilde{O}(n^{-1+\alpha}))$ we have  \begin{align*}
        \P(\vec{\Pi}_1,\cB^c \,|\, \Pi, \bx) &= (1 \pm \widetilde{O}(n^{-1+2\alpha}) \prod_{i \in A} x_i \int_{[0,1]^B, \cB^c}\exp\left(-\sum_{(i,j) \in B \times V^c} (y_i - x_i) x_j \right)\,d\by \\
        &= (1 \pm \widetilde{O}(n^{-1 + 2\alpha})) \prod_{j \in A} x_j \prod_{j \in B} n^{-1/2}\,. \qedhere
    \end{align*}
\end{proof}

The proof of Lemma \ref{lem:first-moment} will now follow easily:

\begin{proof}[Proof of Lemma \ref{lem:first-moment}]
    It is sufficient to prove that for a fixed orientation $\vec{\Pi}_1$ we have $$\P(\vec{\Pi}_1,\cG_\Pi \,|\,\Pi) = (1 \pm \widetilde{O}(n^{-1/3})) n^{-|\Pi \setminus \Pi_1| }\,.$$
    With this in mind, fix sets $A$ and $B$ providing the oriented version of $\vec{\Pi}_1$.  Letting $\bx = \{X_{i,\Pi(i)}\}_{i \in [n]}$, by the law of total expectation we may write  \begin{align*}
        \P(\vec{\Pi}_1,\cG_\Pi\,|\,\Pi)\P(\Pi) = \P(\vec{\Pi}_1,\cG_\Pi,\Pi) = \E_{\bx}[\P(\vec{\Pi}_1,\cG_\Pi,\Pi \,|\,\bx)  ] = \E_\bx[\one_{\cG_\Pi}\P(\vec{\Pi}_1\,|\Pi,\bx)\cdot \P(\Pi\,|\,\bx)]\,.
    \end{align*}
    By Lemmas \ref{lem:single:exact}, \ref{lem:integral-compute} and  \ref{lem:two-point-conditional} we have 
    \begin{align*}
        \E_\bx[\one_{\cG_\Pi}\P(\vec{\Pi}_1\,|\Pi,\bx)\cdot \P(\Pi\,|\,\bx)] &=  (1 \pm \widetilde{O}(n^{-1/2})) n^{-|B|/2} \E_{\bx}[\one_{\cG_\Pi} \prod_{j \in A} x_j \prod_{ij \notin \Pi}(1 - x_i x_j)  ] \\
        &= (1 \pm \widetilde{O}(n^{-1/3})) \frac{e^{1/2}}{(n-1)!!} n^{-|\Pi \setminus \Pi_1|}\,. \qedhere
    \end{align*}
\end{proof}

\subsection{Conditional higher moments}\label{subsec:higher-moments}
The same approach will allow us to show that the probability that a pair of matchings are both stable conditioned on $\Pi$ approximately factors when they differ on different nodes. This will allow us to compute conditional pair probabilities and higher moments.

\begin{proof}[Proof of Lemma~\ref{lem:disjoint-factor}]
    It is sufficient to prove the analogous estimate for the directed versions $\vec{\Pi}_1,\vec{\Pi}_2$.  Let $A_j, B_j$ be the corresponding $A$ and $B$ sets for $\vec{\Pi}_j$ for $j \in \{1,2\}$ and set $V_j = A_j \cup B_j$ and $V = V_1 \cup V_2$.  As before, we will condition on the vectors $\bx,\by,\bz$ with $x_i = X_{i,\Pi(i)}, y_i = X_{i,\Pi_1(i)}$ and $z_i = X_{i,\Pi_2(i)}$.  Letting $\vec{E}$ denote the event on $\bx,\by,\bz$ corresponding to the orientations $\vec{\Pi}_1,\vec{\Pi}_2$, we first claim that on $\cG_\Pi, \vec{E}$ we have 
    \begin{equation}\label{eq:double-disjoint-factor}
        \P(\vec{\Pi}_1,\vec{\Pi}_2 \,|\, \Pi, \bx,\by,\bz) = (1 \pm \widetilde{O}(n^{-1/2})) \prod_{(i,j) \in B_1 \times V^c} \left(\frac{1 - y_ix_j}{1 - x_i x_j} \right)\prod_{(i,j) \in B_2 \times V^c} \left(\frac{1 - z_ix_j}{1 - x_i x_j} \right)\,.
    \end{equation}
    To see this, first note that \begin{equation}\label{eq:stability-event-triple}
        \{\Pi_1, \Pi_2, \Pi\} = \bigwedge_{(i,j) \notin \Pi} \cS_{i,j}(\Pi) \cap \bigwedge_{(i,j) \notin \Pi_1}\cS_{i,j}(\Pi_1) \cap \bigwedge_{(i,j) \notin \Pi_2} \cS_{i,j}(\Pi_2)\,.
    \end{equation}
    For $(i,j) \in V^c \times V^c$ we have $\cS_{i,j}(\Pi) = \cS_{i,j}(\Pi_1) = \cS_{i,j}(\Pi_2)$.  For $(i,j) \in A \times V^c$ we have that each of $\cS_{i,j}(\Pi_1)$ and $\cS_{i,j}(\Pi_2)$ hold conditioned on $\cS_{i,j}(\Pi)$.  This leaves only pairs $(i,j) \in B \times V^c$ and $(i,j) \in V \times V$.  We also note that the events in \eqref{eq:stability-event-triple} are mutually independent conditioned on $\bx,\by,\bz$.  By Lemmas \ref{lem:no-big-y}, \ref{lem:two-point} and \ref{lem:integral-compute}, we may ignore the contribution from the set $\cB = \bigcup_i\{y_i > 2 \log^2 n / \sqrt{n} \vee z_i > 2 \log^2 n / \sqrt{n}\}$\,.  On $\cB^c$, for each pair $(i,j) \in (V \times V) \setminus \{\Pi \cup \Pi_1 \cup \Pi_2\}$ we have $$\P(\cS_{i,j}(\Pi_1) \cap \cS_{i,j}(\Pi_2) \,| \Pi, \bx,\by,\bz) = 1 \pm \widetilde{O}(n^{-1})\,.$$
    Since there are most $|V|^2 \leq n^{1/2}$ many such pairs, this shows \eqref{eq:double-disjoint-factor}\,.  Integrating using Lemma \ref{lem:integral-compute} as in the proof of Lemma \ref{lem:two-point-conditional} completes the proof. 
\end{proof}

\subsection{Overlapping cycles}\label{subsec:quasirandom}
We will prove the following stronger version of Lemma \ref{lem:no-overlaps-undirected}.  

\begin{lemma}\label{lem:no-overlaps-directed}  
    Let $\vec{\Pi}_1$ and $\vec{\Pi}_2$ be orientations of matchings $\Pi_1,\Pi_2$ so that $|\Pi_i \triangle \Pi| \leq n^{1/4}$ for $i \in \{1,2\}$.  Define $A_{1,2} = |\{j \in A_1 \cap A_2 : \Pi_1(j) \neq \Pi_2(j)\}|$.  Then \begin{equation*}
        \P(\vec{\Pi}_1, \vec{\Pi}_2 \,|\,\Pi) \leq (1 + \widetilde{O}(n^{-1/3})) 4^{|A_{1,2}|} n^{-|\Pi_1 \cup \Pi_2 \setminus \Pi|}\,.
    \end{equation*}
\end{lemma}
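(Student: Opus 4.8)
The plan is to adapt the strategy of Lemmas \ref{lem:two-point-conditional} and \ref{lem:disjoint-factor} to two matchings whose symmetric differences with $\Pi$ need not be disjoint. It suffices to prove the bound for a fixed choice of the sets $A_1,B_1$ partitioning the vertex set $V_1$ of $\Pi\triangle\Pi_1$ and $A_2,B_2$ partitioning the vertex set $V_2$ of $\Pi\triangle\Pi_2$, compatible with the alternation \eqref{eq:orientation-choice} around each cycle; write $V=V_1\cup V_2$. Condition on $\bx=(X_{i,\Pi(i)})_i$, $\by=(X_{i,\Pi_1(i)})_{i\in V_1}$ and $\bz=(X_{i,\Pi_2(i)})_{i\in V_2}$. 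As in the earlier proofs the contribution of $\bx\notin\cG_\Pi$ is negligible and, since only an upper bound is sought, $\P(\cG_\Pi^c\,|\,\Pi)=e^{-\Omega(\log^2 n)}$ may simply be added; moreover an adaptation of Lemma \ref{lem:no-big-y} to the pair $(\Pi_1,\Pi_2)$ restricts us to the event $\cB$ that every $y_i,z_i\le 2\log^2 n/\sqrt n$, on which (together with $\cG_\Pi$) every factor appearing below is $1\pm\widetilde O(n^{-1})$.

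By Lemma \ref{lem:two-point} applied to the pairs $(\Pi,\Pi_1)$ and $(\Pi,\Pi_2)$, the probability $\P(\vec{\Pi}_1,\vec{\Pi}_2,\Pi\,|\,\bx,\by,\bz)$ factors over pairs $\{i,j\}$, and dividing by $\P(\Pi\,|\,\bx)=\prod_{ij\notin\Pi}(1-x_ix_j)$ (Lemma \ref{lem:single:exact}) cancels every pair with $i,j\in V^c$. Pairs with both endpoints in $V$ number at most $|V|^2\le 4\sqrt n$ and each contributes $1\pm\widetilde O(n^{-1})$, hence a harmless factor $1\pm\widetilde O(n^{-1/2})$; this includes the ``bridge'' pairs $\{i,\Pi_1(i)\}$ and $\{i,\Pi_2(i)\}$ whose endpoints lie in different cycles, which are easily seen to contribute $1\pm\widetilde O(n^{-1/2})$ once on $\cB\cap\cG_\Pi$. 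For a pair $\{i,j\}$ with $i\in V$, $j\in V^c$, the constraint coming from $\Pi_k$ is automatically implied by the one from $\Pi$ when $i\in A_k$, and otherwise contributes $(1-w_ix_j)/(1-x_ix_j)$, where $w_i$ is the larger of $y_i$ (present iff $i\in B_1$) and $z_i$ (present iff $i\in B_2$). Using $\sum_{j\in V^c}x_j=\sqrt n\,(1\pm\widetilde O(n^{-1/4}))$ on $\cG_\Pi$, the product over $j\in V^c$ exponentiates to $\prod_{i\in B_1\cup B_2}\exp(-(w_i-x_i)\sqrt n)$ up to $1\pm\widetilde O(n^{-1/3})$.

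Taking the law of total expectation, $\P(\vec{\Pi}_1,\vec{\Pi}_2,\cG_\Pi\,|\,\Pi)\P(\Pi)=\E_\bx[\one_{\cG_\Pi}\P(\vec{\Pi}_1,\vec{\Pi}_2,\Pi\,|\,\bx)]$, and $\P(\vec{\Pi}_1,\vec{\Pi}_2,\Pi\,|\,\bx)$ is obtained by integrating the Lemma \ref{lem:two-point} integrand over $(\by,\bz)$ in the region cut out by the fixed orientations. Carrying out this integration vertex by vertex using the previous paragraph: a vertex $i$ lying in exactly one of $V_1,V_2$, or in $V_1\cap V_2$ with $\Pi_1(i)=\Pi_2(i)$, contributes $\int_0^{x_i}dy_i=x_i$ if it is ``$A$-type'' and $\int_{x_i}^1 e^{-(y_i-x_i)\sqrt n}\,dy_i\le n^{-1/2}$ if ``$B$-type''; a vertex $i\in V_1\cap V_2$ with $\Pi_1(i)\ne\Pi_2(i)$ --- call the set of these $Q$ --- contributes $x_i^2$ if $i\in A_1\cap A_2$, the two-dimensional integral $\int\!\!\int_{\,>x_i}e^{-(\max(y_i,z_i)-x_i)\sqrt n}\,dy_i\,dz_i=(2\pm o(1))n^{-1}$ if $i\in B_1\cap B_2$, and $x_i\cdot n^{-1/2}$ if $i\in A_1\cap B_2$ or $B_1\cap A_2$. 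Finally Lemma \ref{lem:integral-compute} integrates the leftover $\bx$, sending each surviving $x_i$ to $n^{-1/2}$ and each $x_i^2$ to $2n^{-1}$ while producing the prefactor $e^{1/2}/(n-1)!!$ matching $\P(\Pi)$. Every vertex of $Q$ thus contributes $n^{-1}$ and every other vertex of $V$ contributes $n^{-1/2}$; since $|V|=2\nu_1+2\nu_2-2t-|Q|$ with $\nu_k=|\Pi_k\setminus\Pi|$ and $t=|(\Pi_1\setminus\Pi)\cap(\Pi_2\setminus\Pi)|$, this totals $n^{-(\nu_1+\nu_2-t)}=n^{-|\Pi_1\cup\Pi_2\setminus\Pi|}$.

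It remains to bound the accumulated constant, which by the above is $2^{|Q\cap A_1\cap A_2|}\cdot 2^{|Q\cap B_1\cap B_2|}$, by $4^{|A_{1,2}|}$; this is the only delicate point. The input is structural: each connected component of $(\Pi_1\triangle\Pi)\cap(\Pi_2\triangle\Pi)$ is a path whose two endpoints are exactly its $Q$-vertices while its interior vertices satisfy $\Pi_1=\Pi_2$, and at a $Q$-endpoint the only shared incident edge is the $\Pi$-edge, so each such path begins and ends with a $\Pi$-edge and hence has an odd number of edges. Consequently the two $Q$-endpoints of a component have opposite parity along $\Pi_1\triangle\Pi$ and along $\Pi_2\triangle\Pi$; since the orientation $\vec{\Pi}_k$ alternates $A_k/B_k$ along the cycles of $\Pi_k\triangle\Pi$, each component either puts one endpoint in $A_1\cap A_2$ and the other in $B_1\cap B_2$ (contributing $2\cdot 2=4$ to the constant and $1$ to $|A_{1,2}|$) or puts one in $A_1\cap B_2$ and the other in $B_1\cap A_2$ (contributing $1$ to both). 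In either case the per-component factor is at most $4$ raised to the per-component increment of $|A_{1,2}|$, and multiplying over components gives the constant $\le 4^{|A_{1,2}|}$. The main obstacle is exactly this overlap bookkeeping --- disentangling the contributions of $Q$-vertices, isolating the bridge pairs, and extracting the parity fact that pins down the constant $4^{|A_{1,2}|}$.
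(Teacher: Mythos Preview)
Your argument is correct and follows essentially the same route as the paper: bound the event by the $V\times V^c$ constraints, integrate $\by,\bz$ vertex by vertex to obtain powers of $x_i$ and $n^{-1/2}$, and finish with Lemma~\ref{lem:integral-compute}. The only notable difference is cosmetic---you compute the $B_1\cap B_2$ contribution as a single two-dimensional integral rather than splitting into the $2^{|B_{1,2}|}$ orderings the paper calls $\vec C$, and you spell out the path-parity argument showing $|Q\cap A_1\cap A_2|=|Q\cap B_1\cap B_2|$, which the paper invokes (as ``$|B_1^\circ|=|A_1^\circ|$ and similarly for $|A_{1,2}|$'') without proof.
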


\begin{remark}\label{remark:k-point}
The proof of Lemma \ref{lem:no-overlaps-directed} can in fact be turned into an asymptotic \emph{equality}, although we do not require it for our work here.  Further, our proof indicates how to handle the asymptotics of an arbitrary $k$-tuple: if one has a tuple of oriented cycles $\vec{\Pi}_j$ for $j \in [k]$ for fixed $k$, then one may construct the directed graph $\bigcup_{j \in [k]} \vec{\Pi}_j \setminus \Pi$ by orienting $\vec{e} \in \vec{\Pi}_j \setminus \Pi$ as coming from $A_j$ and going to $B_j$.  Setting $d_{\mathrm{in}}(j)$ and $d_{\mathrm{out}}(j)$ to be the in and out degree of vertex $j$ in this graph, the proof of Lemma \ref{lem:no-overlaps-directed} shows that one expects \begin{equation*}
    \P(\vec{\Pi}_1,\ldots,\vec{\Pi}_k \,|\,\Pi) \sim \prod_{j \in [n]} \left(d_{\mathrm{in}}(j)!d_{\mathrm{out}}(j)!\right) n^{-|\bigcup_j \Pi_j \setminus \Pi|}
\end{equation*}
provided the orientations are compatible.
\end{remark}

We now quickly note how one can deduce Lemma \ref{lem:no-overlaps-undirected} before proving Lemma \ref{lem:no-overlaps-directed}. 

\begin{proof}[Proof of Lemma \ref{lem:no-overlaps-undirected}]
    There are $4$ possible choices for orientations for the pair $\Pi_1,\Pi_2$.  Noting $\kappa = 2|A_{1,2}|$ and applying Lemma \ref{lem:no-overlaps-directed} completes the proof. 
\end{proof}

\begin{proof}[Proof of Lemma \ref{lem:no-overlaps-directed}]
    Define $A_1^\circ = A_1 \setminus A_{1,2}$ and $A_2^\circ = A_2 \setminus (A_1 \cup A_{1,2})$. Define $B_{1,2}$, $B_1^\circ$ and $B_2^\circ$ analogously.  Define $V = A_1 \cup A_2 \cup B_1 \cup B_2$.  Let $\by$ and $\bz$ be defined by $y_i = X_{i,\Pi_1(i)}$ and $z_i = X_{i,\Pi_2(i)}$ as before.  We again contain the event of stability of $\vec{\Pi}_1,\vec{\Pi}_2$ within 
    \begin{equation*}
        \bigwedge_{(i,j) \in V \times V^c} \cS_{i,j}(\Pi_1) \cap \cS_{i,j}(\Pi_2)\,.
    \end{equation*}

    Let $\vec{E}$ denote the set on $\bx,\by,\bz$ corresponding to orientations $\vec{\Pi}_1, \vec{\Pi}_2$.  On the set $B_{1,2}$ we have $z_i \geq x_i$ and $y_i \geq x_i$.  There are $2^{|B_{1,2}|}$ many choices total for picking the inequalities  $z_i > y_i > x_i$ or $y_i > z_i > x_i$ for each $i \in B_{1,2}$; we will see that each gives the same contribution and they have disjoint supports, so let $\vec{C}$ denote the orientation having $y_i > z_i > x_i$ for all $i \in B_{1,2}$.  Then for $\bx,\by,\bz$ in $\vec{E}$ and $\vec{C}$ we have 
    \begin{align*}
        \P(\vec{\Pi}_1,\vec{\Pi}_2 \,|\,\Pi,\bx,\by,\bz) &\leq \prod_{i \in A_1} \one_{y_i \leq x_i}\prod_{i \in A_2} \one_{z_i \leq x_i} \prod_{(i,j) \in B_1^{\circ} \times V^c} \frac{1 - y_i x_j}{ 1 - x_i x_j} \prod_{(i,j) \in B_2^\circ \times V^c} \frac{1 - z_i x_j}{1 - x_i x_j} \\
        &\qquad \prod_{(i,j) \in B_{1,2}^\circ \times V^c} \frac{1 - y_i x_j}{1 - x_i x_j} \one_{z_i \in [x_i,y_i]}\,.
    \end{align*}
    We note that \begin{equation*}
        \int_{[0,1]^{A_1^\circ \cup A_{1,2}}} \int_{[0,1]^{A_2^\circ \cup A_{1,2}}}  \prod_{i \in A_1} \one_{y_i \leq x_i}\prod_{i \in A_2} \one_{z_i \leq x_i} \,d\by \,d\bz = \prod_{i \in A_1^\circ \cup A_2^\circ} x_i \prod_{i \in A_{1,2}} x_i^2\,.
    \end{equation*}
    Applying Lemma \ref{lem:integral-compute} and bounding as in Lemma \ref{lem:two-point-conditional} we have \begin{align*}
         \int_{[0,1]^{B_1^\circ \cup B_{1,2}}} &\int_{[0,1]^{B_2^\circ \cup B_{1,2}}} \prod_{(i,j) \in B_1^{\circ} \times V^c} \frac{1 - y_i x_j}{ 1 - x_i x_j} \prod_{(i,j) \in B_2^\circ \times V^c} \frac{1 - z_i x_j}{1 - x_i x_j} \prod_{(i,j) \in B_{1,2} \times V^c} \frac{1 - y_i x_j}{1 - x_i x_j} \one_{z_i \in [x_i,y_i]} \,d\by\,d\bz \\
         &\leq (1 + \widetilde{O}(n^{-1/3})) n^{-|B_1^\circ|/2 - |B_2^\circ|/2 - |B_{1,2}|} \,.
    \end{align*}
    We then see that \begin{align*}
        \P(\vec{\Pi}_1,\vec{\Pi}_2,\cG_\Pi, \vec{C} \,|\,\Pi) \P(\Pi) &\leq (1 + \widetilde{O}(n^{-1/3})) n^{-\frac{|B_1^\circ|}{2} - \frac{|B_2^\circ|}{2} - |B_{1,2}|}\E_\bx\left[\one_{\cG_\Pi}  \prod_{i \in A_1^\circ \cup A_2^\circ} x_i \prod_{i \in A_{1,2}} x_i^2 \P(\Pi\,|\,\bx) \right] \\
        &= (1 + \widetilde{O}(n^{-1/3})) n^{-|B_1^\circ| - |B_2^\circ| - 2|B_{1,2}|} 2^{|A_{1,2}|} \frac{e^{1/2}}{(n-1)!!}
    \end{align*}
    by Lemma \ref{lem:integral-compute}, where we used that $|B_1^\circ| = |A_1^\circ|$ and similarly for $|A_2^\circ|$ and $|A_{1,2}|$.   Noting that $|B_1^\circ| + |B_2^\circ| + 2|B_{1,2}^\circ| = |\Pi_1 \cup \Pi_2 \setminus \Pi|$ and summing over all $2^{|B_{1,2}|} = 2^{|A_{1,2}|}$ choices for the ordering $\vec{C}$ completes the proof. 
\end{proof}

\subsection{Combining disjoint stable matchings}\label{subsec:combine}
Finally, we show that if two stable matchings differ from $\Pi$ on disjoint sets, then the matching that simultaneously differs from $\Pi$ on both sets is also stable with high probability. The idea here is that if the matching is not stable, we must have an obstruction occur between the two sets where the matchings differ from $\Pi$. When the sets are not too large, the probability of such an obstruction occurring is small. 
\begin{proof}[Proof of Lemma~\ref{lem:not-union}]
    Write \begin{align}\label{eq:demorgan}
        \{\Pi, \Pi_1, \Pi_2, \neg \Pi_{1,2}\} = \bigwedge_{ij \notin \Pi} \cS_{i,j}(\Pi) \cap \bigwedge_{ij \notin \Pi_1}\cS_{i,j}(\Pi_1) \cap \bigwedge_{ij \notin \Pi_2} \cS_{i,j}(\Pi_2) \cap \left(\bigcup_{ij \notin \Pi_{1,2}} \cS_{i,j}(\Pi_{1,2})^c \right)\,.
    \end{align}
    Let $V_j$ be the nodes of $\Pi_j\triangle \Pi$ and $V = V_1 \cup V_2$.  Note that for $(i,j) \in V^c \times V^c$ we have $\cS_{i,j}(\Pi_{1,2}) = \cS_{i,j}(\Pi)$.  Similarly, for $(i, j) \in V_1 \times V^c$ we have $\cS_{i,j}(\Pi_{1,2}) = \cS_{i,j}(\Pi_1)$ and analogously for $(i,j) \in V_2 \times V^c$.  For $(i,j) \in V_1^2$ with $(i,j) \notin \Pi_1$ we have $\cS_{i,j}(\Pi_{1,2}) = \cS_{i,j}(\Pi_1)$ and analogously for $(i,j) \in V_2^2$.  This implies that the only event in the union in \eqref{eq:demorgan} that may hold simultaneously with the previous intersection is for a pair with $i \in V_1, j \in V_2$, i.e. \begin{align*}
    \{\Pi_1, \Pi_2, \Pi, \neg \Pi_{1,2}\} &\subset \bigcup_{i \in V_1, j \in V_2} \cS_{i,j}^c(\Pi_{1,2}) \cap \{\Pi,\Pi_1,\Pi_2\} \\
    &= \bigcup_{i \in V_1, j \in V_2} \{X_{i,j} < X_{i,\Pi_1(i)} \wedge X_{j,i} < X_{j,\Pi_2(j)}\} \cap \{\Pi,\Pi_1,\Pi_2\}\,.
    \end{align*}
    If we fix a pair, $a \in V_1, b \in V_2$ and orientations $\vec{\Pi}_1,\vec{\Pi}_2$, then in the notation of Lemma \ref{lem:disjoint-factor} we have  \begin{align*}
        \P(\vec{\Pi}_1,\vec{\Pi}_2, \,& X_{i,j} < y_a,  X_{j,i} < z_{b}  \,|\,\Pi, \bx,\by,\bz) \\
        &\leq y_a z_b \P\left(\bigwedge_{(i,j) \in V \times V^c} \cS_{i,j}(\Pi_1) \wedge \cS_{i,j}(\Pi_2) \,\big|\Pi,\bx,\by,\bz \right) \\
        &=y_az_b\prod_{i \in A_1} \one_{y_i < x_i} \prod_{i \in A_2} \one_{z_i < x_i} \prod_{(i,j) \in B_1 \times V^c} \frac{1 - y_i x_j }{1 - x_i x_j}  \prod_{(i,j) \in B_2 \times V^c} \frac{1 - z_i x_j }{1 - x_i x_j} \,.
    \end{align*} 
    Note that by Lemma \ref{lem:no-big-y} we may intersect with the event that $y_a z_b \leq n^{-1 + o(1)}$.  By integrating as in the proof of Lemma \ref{lem:disjoint-factor} we see 
    \begin{align*}
        \P(\Pi_1,\Pi_2, \cS_{a,b}(\Pi_{1,2})^c, \cG_\Pi \,|\,\Pi) \leq n^{-1 + o(1)} \P(\Pi_1, \cG_\Pi \,|\, \Pi) \P(\Pi_2, \cG_\Pi \,|\, \Pi). 
    \end{align*}
    Summing over the at most $n^{2\alpha}$ pairs with $a \in V_1, b \in V_2$ completes the proof.
\end{proof}

\section*{Acknowledgements}
BC is supported by NSF GRFP 2101064. MM is supported in part by NSF grants DMS-2336788 and DMS-2246624.  The authors thank Julian Sahasrabudhe for comments on a previous draft.

\bibliographystyle{plain}
\bibliography{main.bib}

\appendix

\section{Direct integral computations} \label{app:proof-G}

We start with a first simple bound on the integrand:
\begin{lemma}\label{lem:integrand-UB}
    For all $\bx \in [0,1]^n$ we have $$\prod_{ij \notin \Pi}(1 - x_i x_j) \leq e^{81/16}\exp\left(-\frac{1}{2}\left(\sum_i x_i\right)^2  \right)\,.$$
\end{lemma}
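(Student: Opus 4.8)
The plan is to take logarithms and exploit the \emph{second-order} term in the expansion of $\log(1-t)$; the first-order bound $\log(1-t)\le -t$ alone is too lossy. Indeed, writing $S=\sum_i x_i$, $Q=\sum_i x_i^2$, and $P=\sum_{(i,j)\in\Pi}x_ix_j$, the elementary identity $\sum_{i<j}x_ix_j=\tfrac12(S^2-Q)$ gives $\sum_{ij\notin\Pi}x_ix_j=\tfrac12 S^2-\tfrac12 Q-P$, so $\log(1-t)\le -t$ would only yield $\prod_{ij\notin\Pi}(1-x_ix_j)\le e^{\,Q/2+P}\exp(-\tfrac12 S^2)$, and $Q/2+P$ can be of order $n$.

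To fix this, first handle the trivial case: if some non-edge $(i,j)\notin\Pi$ has $x_i=x_j=1$ then the left-hand product is $0$ and there is nothing to prove, so we may assume $x_ix_j<1$ on all non-edges and take logarithms, using $\log(1-t)\le -t-\tfrac12 t^2$ for $t\in[0,1)$. This gives
\[
\log\prod_{ij\notin\Pi}(1-x_ix_j)\le -\sum_{ij\notin\Pi}x_ix_j-\tfrac12\sum_{ij\notin\Pi}(x_ix_j)^2 .
\]
For the quadratic sum I would use $\sum_{i<j}(x_ix_j)^2=\tfrac12\big(Q^2-\sum_i x_i^4\big)$ together with $\sum_i x_i^4\le Q$ and $\sum_{(i,j)\in\Pi}(x_ix_j)^2\le P$ (all valid since $x_i\in[0,1]$), which yields $\sum_{ij\notin\Pi}(x_ix_j)^2\ge \tfrac12 Q^2-\tfrac12 Q-P$. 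Substituting both estimates,
\[
\log\prod_{ij\notin\Pi}(1-x_ix_j)\le -\tfrac12 S^2+\tfrac34 Q+\tfrac32 P-\tfrac14 Q^2 .
\]

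Finally, AM--GM on each matched pair gives $P\le\tfrac12\sum_{(i,j)\in\Pi}(x_i^2+x_j^2)=\tfrac12 Q$, so the right-hand side is at most $-\tfrac12 S^2+\tfrac32 Q-\tfrac14 Q^2\le -\tfrac12 S^2+\tfrac94$, the maximum of $\tfrac32 Q-\tfrac14 Q^2$ over $Q\ge 0$ being attained at $Q=3$. Exponentiating and noting $\tfrac94\le\tfrac{81}{16}$ gives the claim (in fact with the better constant $e^{9/4}$). The whole argument is elementary; the only real idea is retaining the $-\tfrac12 t^2$ term so that the resulting $-\tfrac14 Q^2$ dominates the linear-in-$Q$ errors, and I anticipate no serious obstacle.
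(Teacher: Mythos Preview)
Your proof is correct and follows essentially the same approach as the paper: retain the second-order term $-\tfrac12 t^2$ in $\log(1-t)\le -t-\tfrac12 t^2$, expand both sums in terms of $S$, $Q$, and $P$, and then observe that the resulting $-\tfrac14 Q^2$ dominates the linear-in-$Q$ errors. Your intermediate bounds are slightly sharper (you use $P\le\tfrac12 Q$ rather than $P\le Q$, and $\sum_{(i,j)\in\Pi}(x_ix_j)^2\le P$ rather than lumping it into the fourth-power sum), which is why you obtain the better constant $e^{9/4}$ in place of the paper's $e^{81/16}$.
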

\begin{proof}
    Using the inequality $1 - x \leq e^{-x - x^2/2}$ for $x \geq 0$ we see \begin{align*}
        \prod_{ij \notin \Pi}(1 - x_i x_j) &\leq \exp\left(- \sum_{ij \notin \Pi} \left(x_ix_j + \frac{x_i^2 x_j^2}{2}\right)\right)\,.
    \end{align*}
    Note that \begin{equation*}
        \sum_{ij \notin \Pi} x_i x_j = \frac{1}{2}\left(\sum_{i} x_i\right)^2 - \frac{1}{2}\sum_{i} x_i^2 - \sum_{ij \in \Pi} x_i x_j \geq \frac{1}{2} \left(\sum_{i} x_i \right)^2 - \frac{3}{2} \sum_{i} x_i^2\,.
    \end{equation*}
    The same shows \begin{equation*}
        \sum_{ij \notin \Pi} x_i^2 x_j^2 \geq \frac{1}{2}\left(\sum_i x_i^2 \right)^2 - \frac{3}{2}\sum_i x_i^4 \geq \frac{1}{2}\left(\sum_i x_i^2 \right)^2 - \frac{3}{2}\sum_i x_i^2 
    \end{equation*}
    where we used $x_i \in [0,1]$ to bound $x_i^4 \leq x_i^2$.      This shows 
    \begin{align*}
        \prod_{ij \notin \Pi}(1 - x_i x_j) \leq \exp\left( - \frac{1}{2}\left(\sum_{i} x_i\right)^2 + \frac{9}{4} \sum_i x_i^2 - \frac{1}{4} \left(\sum_i x_i^2 \right)^2\right) \leq \exp\left( - \frac{1}{2}\left(\sum_{i} x_i\right)^2 + \frac{81}{16}\right)
    \end{align*}
    where we used that $\frac{9}{4}t - \frac{1}{4} t^2 \leq \frac{81}{16}$ for all $t \geq 0$.
\end{proof}

We will need two classic concentration estimates. The first is Bernstein's inequality (see, e.g., \cite[Theorem 2.9.1]{vershynin2018high}): 
\begin{fact}\label{fact:bernstein}
    Let $Y_j$ be i.i.d.\ random variables with $\E Y = 0$ so that $\P(|Y| \geq y) \leq 2e^{-q y}$ for some $q$.  Then \begin{equation*}
        \P\left(\left|\sum_{i = 1}^n Y_i \right| \geq t \right) \leq \exp\left(- c_q \min\left\{\frac{t^2}{n},t\right\} \right)\,.
    \end{equation*}  
\end{fact}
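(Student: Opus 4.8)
The plan is to run the standard exponential-moment (Chernoff) argument; this is a textbook bound and the proof is entirely routine, so I will only sketch the steps. The first step is a single-variable moment generating function estimate: if $Y$ is mean-zero with $\P(|Y| \geq y) \leq 2e^{-qy}$, then $\E[e^{\lambda Y}] \leq e^{4\lambda^2/q^2}$ for all $|\lambda| \leq q/2$. To see this I would expand $e^{\lambda Y} = 1 + \lambda Y + \sum_{k \geq 2} \lambda^k Y^k / k!$, take expectations, use $\E Y = 0$ to kill the linear term, and bound the absolute moments via $\E|Y|^k = \int_0^\infty k y^{k-1} \P(|Y| \geq y)\, dy \leq 2 k!\, q^{-k}$; this gives $\E[e^{\lambda Y}] \leq 1 + 2 \sum_{k \geq 2} (|\lambda|/q)^k \leq 1 + 4\lambda^2/q^2 \leq e^{4\lambda^2/q^2}$ once $|\lambda| \leq q/2$, since then the geometric series is summed with ratio at most $1/2$.

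Next I would tensorize: by independence $\E[\exp(\lambda \sum_i Y_i)] = \prod_i \E[e^{\lambda Y_i}] \leq \exp(4 n \lambda^2 / q^2)$ for $|\lambda| \leq q/2$, so Markov's inequality yields $\P(\sum_i Y_i \geq t) \leq \exp(-\lambda t + 4 n \lambda^2/q^2)$ on that range of $\lambda$. The unconstrained optimum is $\lambda_\ast = q^2 t / (8n)$. When $\lambda_\ast \leq q/2$, i.e. $t \leq 4n/q$, plugging in $\lambda_\ast$ gives the bound $\exp(-q^2 t^2/(16 n))$; when $t > 4n/q$, plugging in $\lambda = q/2$ and using $n < qt/4$ gives $\exp(-qt/2 + n) \leq \exp(-qt/4)$. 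Combining the two regimes gives $\P(\sum_i Y_i \geq t) \leq \exp(-\min\{q^2 t^2/(16n),\, qt/4\})$, and since $\min\{ax, cy\} \geq \min\{a,c\}\min\{x,y\}$ for positive reals, this is at most $\exp(-c_q \min\{t^2/n, t\})$ with $c_q = \min\{q^2/16, q/4\}$.

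Finally I would apply the same bound to $-Y_i$ (which has the same mean and tail) and take a union bound to control $|\sum_i Y_i|$ up to a factor of $2$, absorbing that factor into the constant: in the regime where $\min\{t^2/n,t\}$ is bounded the claimed bound is $\Omega(1)$, so the trivial estimate $\P(\cdot)\leq 1$ suffices after shrinking $c_q$, while for larger $t$ the factor $2$ is dominated by the exponential. There is no real obstacle here; the only mild care needed is to track how the parameter $q$ enters through the admissible range $|\lambda| \leq q/2$ in the moment bound and to check that the two optimization regimes glue into the single clean $\min\{t^2/n, t\}$ expression claimed.
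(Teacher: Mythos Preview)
The paper does not prove this statement at all: it is recorded as a ``Fact'' and simply cited to Vershynin's textbook \cite[Theorem~2.9.1]{vershynin2018high}, so there is no paper proof to compare against. Your sketch is the standard Chernoff/Bernstein argument and is correct: the moment bound $\E|Y|^k \leq 2k!\,q^{-k}$ from the tail assumption, the MGF estimate $\E e^{\lambda Y}\leq e^{4\lambda^2/q^2}$ for $|\lambda|\leq q/2$, tensorization, and the two-regime optimization all check out and produce $c_q=\min\{q^2/16,q/4\}$ before the final absorption step.

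One small remark on that last step: strictly speaking the union bound leaves a factor of $2$ in front, and the stated inequality without any leading constant can fail for very small $t$ (where $\P(|\sum Y_i|\geq t)$ may exceed $\exp(-c_q\min\{t^2/n,t\})$ no matter how small $c_q$ is, since the right side is still $<1$). Your phrasing ``the trivial estimate $\P(\cdot)\leq 1$ suffices after shrinking $c_q$'' does not quite close this, because shrinking $c_q$ only makes the right-hand side closer to $1$ from below. This is a defect of the paper's statement rather than of your argument, and in every application in the paper the relevant $t$ is large enough that the factor of $2$ is harmless; but if you want a literally true statement you should either keep the $2$ in front or restrict to $t$ bounded below.
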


We also need a variant for random variables with stretched exponential tails (see, e.g., \cite{brosset2022large}).
\begin{fact}\label{fact:stretched-exponential}
    Let $Y_j$ be i.i.d.\ random variables with $\E Y = 0$ so that $\log\P(|Y| \geq y) \sim -q y^{1/2}$ for some $q$ as $y \to \infty$.  Then for each fixed $\alpha \geq 1/2$ we have  \begin{equation*}
        \P\left(\left|\sum_{i= 1}^n Y_i\right| \geq n^{\alpha} \right) \leq \exp\left( - \min\{n^{2\alpha - 1 + o(1)},n^{\alpha/2 + o(1)} \}\right)\,.
    \end{equation*}
\end{fact}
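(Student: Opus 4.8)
Since $-Y$ satisfies the same hypotheses as $Y$, it suffices to bound the one-sided probability $\P\!\big(\sum_{i=1}^n Y_i \ge n^\alpha\big)$ and union bound over the two tails. The plan is a truncation argument of Fuk--Nagaev type, organized around the ``single big jump'' picture for sums of subexponential variables: the event $\{\sum_i Y_i \ge n^\alpha\}$ is driven either by one summand of size $\asymp n^\alpha$, which has probability $\asymp n\,\P(|Y| \ge n^\alpha) = \exp(-(q+o(1))n^{\alpha/2})$, or by a collective Gaussian-type fluctuation, which has probability $\exp(-\Theta(n^{2\alpha-1}))$. These two exponents cross at $\alpha = 2/3$, and the stated estimate asserts no more than that $\P$ is at most $\exp$ of minus the smaller one; it is a convenient weakening of the sharp Nagaev-type large-deviation asymptotics of \cite{brosset2022large}.

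For the argument, fix a level $T$ and write $Y_i = \bar Y_i + \hat Y_i$ with $\bar Y_i = Y_i\,\one\{|Y_i| \le T\}$. The large part is handled by a union bound, $\P(\exists\, i : |Y_i| > T) \le n\,\P(|Y| > T) \le \exp\!\big(-(q-o(1))T^{1/2} + \log n\big)$. For the truncated part, $\bar Y_i$ is bounded by $T$, has variance at most $\sigma^2 := \mathrm{Var}(Y) < \infty$ (all moments of $Y$ being finite, since its tail is stretched-exponential), and mean $|\E\bar Y_i| = |\E[Y\,\one\{|Y|>T\}]|$, which is superpolynomially small once $T \ge \log^3 n$. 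A Chernoff bound with a parameter $\lambda$ obeying $\lambda T = O(1)$ then gives $\E e^{\lambda\bar Y_i} \le \exp(O(\sigma^2\lambda^2))$ up to negligible corrections, whence $\P\!\big(\sum_i\bar Y_i \ge n^\alpha\big) \le \exp\!\big(-\lambda n^\alpha + O(n\sigma^2\lambda^2)\big)$; optimizing over $\lambda$ subject to $\lambda T = O(1)$ yields $\exp(-\Theta(n^{2\alpha-1}))$ provided $T \le Cn^{1-\alpha}$. Taking $T = Cn^{1-\alpha}$ produces the bound $\exp(-n^{2\alpha-1+o(1)}) + \exp(-n^{(1-\alpha)/2 + o(1)})$, which already yields the claim whenever the collective term dominates (roughly $\alpha \le 3/5$).

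The main obstacle is pushing the large-part exponent from $n^{(1-\alpha)/2}$ up to the sharp $n^{\alpha/2}$ near and above the crossover $\alpha = 2/3$: a single truncation level cannot do both, since beating $n^{\alpha/2}$ in the union bound forces $T \gtrsim n^\alpha$ while securing the collective rate $n^{2\alpha-1}$ forces $T \lesssim n^{1-\alpha}$, and these are incompatible for $\alpha > 1/2$. Resolving this requires peeling off large summands along a dyadic ladder of scales $n^\alpha, n^\alpha/2, n^\alpha/4,\dots$ --- bounding, at each scale $\theta$, the probability of two summands exceeding $\theta$ by $\binom n2\P(|Y|>\theta)^2 = \exp(-\Theta(\theta^{1/2}) + O(\log n))$ --- combined with a precise (rather than variance-only) estimate of the truncated exponential moment; carrying this bookkeeping through is exactly the content of the Nagaev-type inequalities of \cite{brosset2022large}, from which the exponent $n^{\min\{2\alpha-1,\,\alpha/2\}+o(1)}$ follows. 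We note that for the sole use of this fact in the paper --- inside the proof of Lemma~\ref{lem:prob-G-UB} with $\alpha = 2/3$, to control $\sum_i x_i^2$ --- even the cruder bound $\exp(-n^{\Omega(1)})$ of the previous paragraph is far more than enough, as one only needs $\P(\cG_\Pi^c\,|\,\Pi) \le e^{-\Omega(\log^2 n)}$.
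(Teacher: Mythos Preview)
The paper does not supply its own proof of this fact; it is stated with a bare citation to \cite{brosset2022large} and used as a black box, so there is no paper proof to compare against. Your sketch of the single-level truncation-plus-Chernoff argument is correct and standard, and you correctly identify that upgrading the jump-part exponent from $n^{(1-\alpha)/2}$ to the sharp $n^{\alpha/2}$ requires the multi-scale Nagaev bookkeeping of the cited reference; in that sense your write-up is doing exactly what the paper does, only with more explanation.

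Your closing remark, however, is off on two counts. First, the fact is invoked not only in the proof of Lemma~\ref{lem:prob-G-UB} but also inside Claim~\ref{cl:expectation-Y's} in the proof of Lemma~\ref{lem:integral-compute}. Second, in that second use the crude single-truncation bound does \emph{not} suffice. There one must justify dropping the indicator $\one\{\cG_2'\cap\cG_3'\}$ from $\E\big[\prod_{j\in B_1}\hat Y_j\prod_{j\in B_2}\hat Y_j^2\,\one\{\cG'\}\big]$ at cost $1\pm O(n^{-1/2})$; since on $\cG_1'$ the integrand is only bounded by $(C\log^2 n/n)^{|B_1|+2|B_2|}$ with $|B_1|,|B_2|$ allowed up to $n^{1/4}$, this requires $\P\big((\cG_2')^c\cup(\cG_3')^c\big)\le\exp\!\big(-\omega(n^{1/4}\log\log n)\big)$. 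At $\alpha=2/3$ your truncation $T=Cn^{1-\alpha}$ gives exponent $n^{1/6}$, and even the optimal single-level choice $T=n^{2\alpha/3}$ gives only $n^{\alpha/3}=n^{2/9}$; both fall short of $n^{1/4}$. The sharp exponent $n^{1/3}$ from the cited Nagaev-type estimate is genuinely needed for Lemma~\ref{lem:integral-compute} as stated.
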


This then applies to both $\sum_i (X_i^2 - 2)$ and $\sum_{ij \in \Pi} (X_i X_j - 1)$ for $X_i$ being i.i.d.\ standard exponential random variables. The proof of Lemma \ref{lem:prob-G-UB} will now follow easily.

\begin{proof}[Proof of Lemma~\ref{lem:prob-G-UB}]
    Combining Lemmas \ref{lem:single:exact} and \ref{lem:integrand-UB} that $$\P(\Pi \,|\, X_{i,\Pi(i)} = x_i) = \prod_{ij \notin \Pi} (1 - x_ix_j) \leq e^{6} \exp\left(- \frac{1}{2} \left(\sum_{i} x_i \right)^2 \right)\,.$$
    Set $s = \sum_i x_i$.  We first show that the contribution to $\P(\Pi)$ on the event $\cB = \{|s - \sqrt{n}| \geq 10 \log n\}$ is small.  By changing variables to the simplex we see  
    \begin{align*}
        \P(\Pi \cap \cB) \leq e^{6} \int_{\bx \in [0,1]^n \cap \cB} e^{-s^2 / 2} \,d\bx \leq e^{6} \int_{|s - \sqrt{n}| \geq 10 \log n} e^{-s^2 / 2} \frac{s^{n-1}}{(n-1)!}\,ds\,. 
    \end{align*}
    Note that $-s^2 / 2 + (n-1) \log s$ is concave with maximum at $s_\ast = \sqrt{n-1}$. We also note that the integral over all $s$ is given by \begin{equation*}
        \int_0^\infty e^{-s^2 / 2} s^{n-1 } \,ds = (n-2)!! = n^{o(1)} \left(\frac{n-1}{e}\right)^{(n-1)/2}\,.
    \end{equation*}
    By monotonicity of the integrand we may bound \begin{align*}
        \int_0^{\sqrt{n} - 10 \log n} e^{-s^2/2} s^{n-1} \,ds &\leq \sqrt{n} \exp\left(- \frac{(\sqrt{n} - 10 \log n)^2}{2} + (n-1) \log (\sqrt{n} - 10 \log n) \right) \\
        &=  \left(\frac{n-1}{e} \right)^{(n-1)/2} e^{-(100 + o(1)) \log^2 n}\,.
    \end{align*}
    For the upper tail we parameterize $s = \sqrt{n-1}(1 +t)$; the integrand may be bound via \begin{equation*}
        e^{-s^2 / 2}s^{n-1} = \left(\frac{n-1}{e} \right)^{(n-1)/2} e^{- \left(\frac{n-1}{2}\right) \left(2t + t^2 - 2\log(1 + t) \right)} \leq \left(\frac{n-1}{e} \right)^{(n-1)/2} e^{- t^2\left(\frac{n-1}{2}\right)}
    \end{equation*}
    where in the last bound we used $t \geq \log(1 + t)$ for $t > -1$.  Integrating shows that $$\P(\Pi \cap \cB) \leq e^{-(100 + o(1)) \log^2n} \frac{(n-2)!!}{(n-1)!}= \frac{e^{-(100 + o(1)) \log^2 n}}{(n-1)!!}\,.$$
    For the other events in $\cG_\Pi$ we recall that if $(x_1,\ldots,x_n) \in [0,\infty)^n$ is chosen uniformly at random conditioned on $\sum x_j = s$ then we have $x_j = s \frac{Y_j}{\sum_i Y_i} =: s \widehat{Y}_j$ where $Y_j$ are i.i.d.\ standard exponential random variables. We then bound
    \begin{align*}
        \P\left(X_{i,\Pi(i)} \geq \frac{\log^2 n}{\sqrt{n}} , \cB^c, \Pi\right) &\leq e^6 \int_{0}^{\sqrt{n} + 10 \log n} \frac{e^{-s^2/2} s^{n-1}}{(n-1)!} \cdot \P\left(\cG'^c\right)\,ds\,.
    \end{align*}
    where $\cG'$ is defined by \begin{align}
        \cG' = \left\{\forall~i : \widehat{Y}_i \leq \frac{\log^2 n}{s\sqrt{n}}\right\} \cap \left\{ \left|\sum_{(i,j) \in \Pi} \widehat{Y}_i\widehat{Y}_j  - \frac{1}{2s^2} \right| \leq \frac{1}{s^2 n^{1/3}} \right\} \cap \left\{\left|\sum_{i} \widehat{Y}_i^2s^2 - 2\right| \leq n^{-1/3} \right\} \label{eq:cG'-def}\,.
    \end{align}
    By Facts \ref{fact:bernstein} and  \ref{fact:stretched-exponential} we have $\P(\cG'^c) \leq e^{-\Omega(\log^2 n)}$.  Using the fact that $$\int_0^{\infty} s^{n-1} e^{-s^2 / 2}\,ds = (n-2)!!$$ completes the proof. 
\end{proof}

\begin{proof}[Proof of Lemma \ref{lem:integral-compute}]
    First note that on the event $\cG_\Pi$ using the notation $s = \sum_i x_i$ we have 
    \begin{align*}
        \log\left(\prod_{(i,j) \notin \Pi} (1 - x_ix_j)\right) &= -\sum_{(i,j) \notin \Pi} \left(x_{i}x_j + \frac{x_i^2 x_j^2}{2}\right) \pm \widetilde{O}(n^{-1}) \\
        &= -\frac{s^2}{2} + \frac{1}{2}\sum_i x_i^2 + \sum_{(i,j) \in \Pi} x_i x_j - \frac{1}{4} \left(\sum_{i} x_i^2\right)^2 \pm \widetilde{O}(n^{-1}) \\
        &= -\frac{s^2}{2} + \frac{1}{2} \pm {O}(n^{-1/3}) \,.
    \end{align*}
    We then simplify \begin{align*}
        \int_{\bx \in \cG_\Pi} &\prod_{j \in B_1} x_j \prod_{j \in B_2} x_j^2 \prod_{(i,j) \notin \Pi} (1 - x_ix_j) \,d\bx = (e^{1/2} \pm O(n^{-1/3})) \int_{\bx \in \cG_\Pi} \prod_{j \in B_1} x_j \prod_{j \in B_2} x_j^2  \cdot e^{-s^2/2} \,d\bx\,. \end{align*}
        Letting $Y_j$ be i.i.d.\ standard exponential variables, and $\widehat{Y}_j = \frac{Y_j}{\sum_{i = 1}^n Y_i}$ we may change variables to the simplex and see
    \begin{align*}
        \int_{\bx \in \cG_\Pi} \prod_{j \in B_1} x_j \prod_{j \in B_2} x_j^2  \cdot e^{-s^2/2} \,d\bx = \E\left[ \prod_{j \in B_1} \widehat{Y}_j \prod_{j \in B_2} \widehat{Y}_j^2 \one\{\cG'\}\right] \int_{\sqrt{n} - 10 \log n}^{\sqrt{n} + 10\log n} \frac{s^{n+|B_1| + 2|B_2|-1}}{(n-1)!} e^{-s^2/2} \,ds
    \end{align*}
    where $\cG'$ is defined in \eqref{eq:cG'-def}.   
    \begin{claim}\label{cl:expectation-Y's}
    We have
        $$\E\left[ \prod_{j \in B_1} \widehat{Y}_j \prod_{j \in B_2} \widehat{Y}_j^2 \one\{\cG'\}\right] = (1 \pm O(n^{-1/2})) n^{-|B_1| - 2|B_2|} 2^{|B_2|}\,.$$
    \end{claim}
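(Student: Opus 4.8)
$\E\left[ \prod_{j \in B_1} \widehat{Y}_j \prod_{j \in B_2} \widehat{Y}_j^2 \one\{\cG'\}\right] = (1 \pm O(n^{-1/2})) n^{-|B_1| - 2|B_2|} 2^{|B_2|}$, where $Y_j$ are i.i.d. standard exponentials, $\widehat{Y}_j = Y_j / \sum_i Y_i$, and $\cG'$ is the good event from \eqref{eq:cG'-def}.

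**Approach:** The strategy is to first drop the indicator $\one\{\cG'\}$ at a cost of $e^{-\Omega(\log^2 n)}$, compute the unrestricted expectation exactly (or to the required precision), and then control the error from reinstating the indicator.

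Here's the plan in steps:

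1. **Remove the indicator.** Write $\one\{\cG'\} = 1 - \one\{\cG'^c\}$. The main term is $\E\left[\prod_{j\in B_1}\widehat Y_j \prod_{j\in B_2}\widehat Y_j^2\right]$, and the error term is $\E\left[\prod_{j\in B_1}\widehat Y_j \prod_{j\in B_2}\widehat Y_j^2 \one\{\cG'^c\}\right]$. Since $\widehat Y_j \le 1$ always, the error is at most $\P(\cG'^c) \le e^{-\Omega(\log^2 n)}$, which is negligible compared to $n^{-|B_1|-2|B_2|}2^{|B_2|}$ given $|B_1|,|B_2| \le n^{1/4}$.

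2. **Compute the unrestricted expectation exactly.** Set $S = \sum_{i=1}^n Y_i \sim \mathrm{Gamma}(n,1)$. By the standard beta/gamma decomposition, $(\widehat Y_1, \ldots, \widehat Y_n)$ is Dirichlet$(1,\ldots,1)$ and is \emph{independent} of $S$. Therefore $\widehat Y_j = Y_j / S$ with $S$ independent of the vector $(\widehat Y_i)$. Since $B_1$ and $B_2$ are disjoint, and using independence of $S$ from the normalized vector together with $Y_j = S\widehat Y_j$, we can write
\[
\E\left[\prod_{j\in B_1}\widehat Y_j \prod_{j\in B_2}\widehat Y_j^2\right] = \frac{\E\left[\prod_{j\in B_1} Y_j \prod_{j\in B_2} Y_j^2\right]}{\E[S^{|B_1|+2|B_2|}]}\,.
\]
The numerator factors as $\prod_{j\in B_1}\E[Y_j]\prod_{j\in B_2}\E[Y_j^2] = 1^{|B_1|}\cdot 2^{|B_2|} = 2^{|B_2|}$ by independence of the $Y_j$'s and $\E[Y] = 1$, $\E[Y^2] = 2$. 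The denominator is $\E[S^m] = \Gamma(n+m)/\Gamma(n) = (n)_m = n(n+1)\cdots(n+m-1)$ where $m = |B_1| + 2|B_2|$.

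3. **Asymptotics of the Pochhammer symbol.** We have $(n)_m = n^m \prod_{\ell=0}^{m-1}(1 + \ell/n) = n^m(1 + O(m^2/n))$. Since $m = |B_1| + 2|B_2| \le 3n^{1/4}$, we get $m^2/n = O(n^{-1/2})$, so $(n)_m = n^m(1 \pm O(n^{-1/2}))$. Hence the unrestricted expectation equals $2^{|B_2|} n^{-|B_1|-2|B_2|}(1 \pm O(n^{-1/2}))$.

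4. **Combine.** Adding back the negligible $e^{-\Omega(\log^2 n)}$ error from Step 1 (which is absorbed into the $O(n^{-1/2})$ relative error since the main term is at least $n^{-3n^{1/4}} \gg e^{-\log^2 n}$ is \emph{false} — wait, need care here). Actually the main term $2^{|B_2|}n^{-|B_1|-2|B_2|}$ could be as small as $n^{-3n^{1/4}}$, which is \emph{much smaller} than $e^{-\log^2 n}$. So Step 1's crude bound is insufficient; instead, bound the error term $\E[\prod \widehat Y_j \prod \widehat Y_j^2 \one\{\cG'^c\}]$ by Cauchy–Schwarz: it is at most $\left(\E[\prod \widehat Y_j^2 \prod \widehat Y_j^4]\right)^{1/2} \P(\cG'^c)^{1/2}$, and the second-moment factor is itself of order $n^{-|B_1|-2|B_2|}$ times a bounded combinatorial factor (by the same Dirichlet computation), so the error is $n^{-|B_1|-2|B_2|}\cdot e^{-\Omega(\log^2 n)}$, which \emph{is} negligible relative to the main term. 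This completes the proof.

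**Main obstacle.** The conceptual content is entirely routine once one recalls the beta–gamma algebra (Dirichlet vector independent of the total sum); the only subtlety is Step 4, where the crude bound $\widehat Y_j \le 1$ on the truncation error is not good enough because the target quantity can be super-polynomially small. The fix — a Cauchy–Schwarz bound pulling out a factor of the same order $n^{-|B_1|-2|B_2|}$ before invoking $\P(\cG'^c) \le e^{-\Omega(\log^2 n)}$ — resolves it cleanly, and no Laplace-method estimate is needed since the $s$-integral was already separated out before this claim.
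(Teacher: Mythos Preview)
Your computation of the unrestricted expectation in Steps 2--3 is correct and in fact cleaner than the paper's argument: the paper does not invoke independence of the Dirichlet vector from the total $S$, but instead isolates $Z=\sum_{i\notin B_1\cup B_2}Y_i$ and uses independence of $Z$ from $(Y_j)_{j\in B_1\cup B_2}$, which requires an extra approximation step $\widehat Y_j\approx Y_j/Z$ on the good event. Your route gives the exact identity $\E[W]=2^{|B_2|}/(n)_m$ immediately.

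There is, however, a genuine gap in Step 4. Your Cauchy--Schwarz bound reads
\[
\E[W\one\{\cG'^c\}]\le (\E[W^2])^{1/2}\P(\cG'^c)^{1/2},
\]
and by the same Dirichlet identity $\E[W^2]=2^{|B_1|}24^{|B_2|}/(n)_{2m}$, so $(\E[W^2])^{1/2}=(1+O(n^{-1/2}))\sqrt{2}^{\,|B_1|}\sqrt{24}^{\,|B_2|}n^{-m}$. The ratio $(\E[W^2])^{1/2}/\E[W]$ is therefore $\sqrt{2}^{\,|B_1|}\sqrt{6}^{\,|B_2|}$, which is \emph{not} a bounded combinatorial factor: with $|B_1|,|B_2|\le n^{1/4}$ it can be as large as $e^{\Theta(n^{1/4})}$. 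Since $\P(\cG'^c)^{1/2}$ is only $e^{-\Omega(\log^2 n)}$ (the bottleneck is $\cG_1'$, the event bounding $\max_i\widehat Y_i$), the product is not small and your error bound fails.

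The repair is to treat the three pieces of $\cG'$ separately. For $(\cG_2')^c$ and $(\cG_3')^c$ the paper records $\P\le e^{-n^{1/3+o(1)}}$, which does dominate $e^{O(n^{1/4})}$, so Cauchy--Schwarz is fine there. For $(\cG_1')^c=\bigcup_i\{\widehat Y_i>t\}$ with $t\asymp\log^2 n/n$, union bound over $i$ and condition on $\widehat Y_i=u$: for $i\notin B_1\cup B_2$ the remaining coordinates are $(1-u)$ times a Dirichlet on $n-1$ points, giving $\E[W\mid\widehat Y_i=u]=(1-u)^m\cdot 2^{|B_2|}/(n-1)_m\le 2\E[W]$, hence $\E[W\one\{\widehat Y_i>t\}]\le 2\E[W]\P(\widehat Y_i>t)$; the cases $i\in B_1\cup B_2$ are handled similarly with an extra harmless factor of $n$. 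Summing over $i$ gives relative error $O(n^2 e^{-\log^2 n})$, which suffices.
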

    \begin{proof}[Proof of Claim \ref{cl:expectation-Y's}]
        Set $\cG_1',\cG_2'$ and $\cG_3'$ to be the three events in \eqref{eq:cG'-def}.  Set $Z = \sum_{i \notin B_1 \cup B_2} Y_i$ and set  $\cG_4' = \{Z \leq n/2\}$ and note that by Bernstein's inequality we have $\P(\cG_4'^c) \leq \exp(- \Omega(n))$ and so we may condition on $\cG_4'$.  On $\cG_4'$ and $\cG_1'$ we note that \begin{align*}
            \prod_{j \in B_1} \widehat{Y}_j \prod_{j \in B_2} \widehat{Y}_j^2 &= (1 \pm \widetilde{O}(n^{-3/4}))^{|B_1| + 2|B_2|} Z^{-|B_1| - 2|B_2|}\prod_{j \in B_1} {Y}_j \prod_{j \in B_2} {Y}_j^2  \\
            &= (1 \pm \widetilde{O}(n^{-1/2})) Z^{-|B_1| - 2 |B_2|}\prod_{j \in B_1} {Y}_j \prod_{j \in B_2} {Y}_j^2 \,.
        \end{align*}
        By Fact \ref{fact:bernstein} we have $\P(|\sum_i Y_i - n| \geq n^{2/3}) = \exp(-n^{-1/3 + o(1)})$ and so by Fact \ref{fact:stretched-exponential} we have that $$\P((\cG_2')^c \cup (\cG_3')^c) \leq e^{-n^{1/3 + o(1)}}\,.$$
    We may thus remove the conditioning on $\cG_2',\cG_3'$.  We then have \begin{align*}
        \E\left[ \prod_{j \in B_1} \widehat{Y}_j \prod_{j \in B_2} \widehat{Y}_j^2 \one\{\cG_1' \cap \cG_4'\}\right] &= (1 \pm \widetilde{O}(n^{-1/2})) \E\left[ Z^{-|B_1| - 2|B_2|} \prod_{j \in B_1} {Y}_j \prod_{j \in B_2} {Y}_j^2  \one\{\cG_1' \cap \cG_4'\} \right]\,.
    \end{align*}
    For an upper bound on this integral, we may omit the indicator variable and precisely compute $$\E\left[ Z^{-|B_1| - 2|B_2|} \prod_{j \in B_1} {Y}_j \prod_{j \in B_2} {Y}_j^2  \right] = (1 \pm O(n^{-1/2}))n^{-|B_1| - 2 |B_2|} 2^{|B_2|}\,.$$
    For a matching lower bound, we may first sample $Z$ and note that by a Bernstein's inequality we have that $Z  = n \pm \widetilde{O}(n^{-1/2})$ with probability $\geq 1 - 1/n$, and so we may lower bound the expectation over $Z$ by this quantity.  Taking the expectation over the variables $Y_j$ for $j \in B_1 \cup B_2$ completes the claim.
    \end{proof}
    To complete the proof, note \begin{align*}\int_{\sqrt{n} - 10 \log n}^{\sqrt{n} + 10 \log n} s^{n+|B_1| + 2|B_2|-1} e^{-s^2/2} \,ds &=(1 \pm O(n^{-1})) \int_{0}^{\infty} s^{n+|B_1| + 2|B_2|-1} e^{-s^2/2} \,ds  \\
     &=(1 \pm O(n^{-1}))(n+|B_1| + 2|B_2| - 2)!! \,. \end{align*} 
     Noting that \begin{align*}
         \frac{(n+|B_1| + 2|B_2| - 2)!!}{(n-1)!} n^{-|B_1| - 2 |B_2|} &= (1 + O(n^{-1/2})) \frac{(n-2)!!}{(n-1)!} n^{-|B_1|/2 - |B_2|} \\
         &=  (1 + O(n^{-1/2})) \frac{1}{(n-1)!!} n^{-|B_1|/2 - |B_2|}
     \end{align*}
     completes the proof.
\end{proof}

\end{document}